\documentclass[reqno, 10pt]{amsart}
\usepackage{amsmath,amscd,amsthm,amsxtra, textcomp}
\usepackage{mathtools}
\usepackage{epsfig,graphics,color,colortbl}
\usepackage{amssymb,latexsym}
\usepackage{mathabx}
\usepackage{mathrsfs,eucal,upgreek}
\usepackage[poly,all]{xy}
\usepackage{hyperref}
\usepackage{tikz-cd}
\usepackage{arydshln}
\usepackage{ytableau}
\usepackage[draft]{todonotes}
\usepackage{enumerate}

\makeatletter
\@namedef{subjclassname@2020}{\textup{2020} Mathematics Subject Classification}
\makeatother

\setlength{\textwidth}{14cm} \setlength{\textheight}{21cm}
\setlength{\oddsidemargin}{1.2cm} \setlength{\evensidemargin}{1.2cm}

\newtheorem{thm}{\bf Theorem}[section]
\newtheorem{df}[thm]{\bf Definition}
\newtheorem{prop}[thm]{\bf Proposition}

\newtheorem{lem}[thm]{\bf Lemma}
\newtheorem{rem}[thm]{\bf Remark}
\newtheorem{ex}[thm]{\bf Example}

\numberwithin{equation}{section}



\begin{document}
\title[Extremal weight crystals over affine Lie algebras of infinite rank]
{Extremal weight crystals over affine Lie algebras of infinite rank}
\author{TAEHYEOK HEO}

\address{Research Institute of Mathematics, Seoul National University, Seoul 08826, Korea}
\email{gjxogur123@snu.ac.kr}

\keywords{extremal weight crystals, affine Lie algebra of infinite rank, Jacobi-Trudi formula, Grothendieck ring}
\subjclass[2020]{17B37, 17B10, 05E10, 17B65}




\thanks{
This research was supported by Basic Science Research Program through the National Research Foundation of Korea(NRF) funded by the Ministry of Education (RS-2023-00241542).}

\begin{abstract}
We explain extremal weight crystals over affine Lie algebras of infinite rank using combinatorial models: 
a spinor model due to Kwon, and an infinite rank analogue of Kashiwara-Nakashima tableaux due to Lecouvey. 
In particular, we show that Lecouvey's tableau model is isomorphic to an extremal weight crystal of level zero.
Using these combinatorial models, we explain an algebra structure of the Grothendieck ring for a category consisting of some extremal weight crystals.
\end{abstract}

\maketitle
\setcounter{tocdepth}{1}
\tableofcontents

\section{Introduction}



Let $U_q(\mathfrak{g})$ be a quantum group associated with a Kac-Moody algebra $\mathfrak{g}$. 
For $\lambda \in P$, Kashiwara \cite{Kas94,Kas02} constructs an (integrable) extremal weight module $V(\lambda)$, which is a generalization of a highest weight module. 
It has been long studied (cf. \cite{Kas94,Kas02}) because it has a close connection to level-zero representations over quantum affine algebras. 
In particular, an extremal weight module $V(\lambda)$ for level-zero $\lambda \in P$ is isomorphic to a tensor product of fundamental representations (cf. \cite{Beck02,BN04,Na04}). 
As a combinatorial approach to understanding extremal weight modules, one makes use of the theory of a crystal base of an integrable $U_q(\mathfrak{g})$-module. 
Although it is a local basis at $q=0$, it still encodes a significant amount of information from the original module. 
It is proved in \cite{Kas94} that an extremal weight module $V(\lambda)$ has a crystal base $(L(\lambda), B(\lambda))$, 
and we call such $B(\lambda)$ an extremal weight crystal.


In this paper, we explain some properties of extremal weight crystals over affine Lie algebras of infinite rank in a combinatorial way. 
In general, Naito and Sagaki \cite{NS12} show that, for any $\lambda \in P$ with ${\rm lv}(\lambda) \geq 0$, 
\begin{equation} \label{eqn:intro decomp}
    B(\lambda) \ \cong \ B(\lambda^0) \otimes B(\lambda^+)
\end{equation}
for some $\lambda^0 \in E$ and $\lambda^+ \in P^+$ (Proposition \ref{prop:decomp into zero and posi}). 
Here, $E$ is a set of some level-zero weights and $P^+$ is the set of all dominant weights. 
Thus, we reduce the understanding of $B(\lambda)$ ($\lambda \in P$)
to the understanding of $B(\lambda^0)$ ($\lambda^0 \in E$) and $B(\lambda^+)$ ($\lambda^+ \in P^+$) and 
to finding the relations between $B(\lambda^0)$ and $B(\lambda^+)$. 
We will follow this argument in this paper.

Various combinatorial objects, including tableau models and path models (cf. \cite{NS12}), 
contributes to the combinatorial realizations of crystal bases.
As a classical result, when $\mathfrak{g}$ is of classical (finite) type, Kashiwara and Nakashima \cite{KN94} show that a set of ($\mathcal{I}_n^\mathfrak{g}$)-semistandard tableaux 
satisfying certain configuration conditions is isomorphic to the crystal base of an irreducible highest weight $U_q(\mathfrak{g})$-module. 
We call such a semistandard tableau a Kashiwara-Nakashima (KN for short) tableau.
As its generalization, Lecouvey \cite{Le09} introduces a `KN-like' tableau, which corresponds to an infinite rank analogue of KN tableaux. 
We simply call it a $\mathfrak{g}_\infty$-type KN tableau, where $\mathfrak{g}_\infty$ is an affine Lie algebra of infinite rank. 
One of our results is to explain a $\mathfrak{g}_\infty$-crystal structure of the set of $\mathfrak{g}_\infty$-type KN tableaux. 
In particular, we prove that it is isomorphic to $B(\lambda)$ for some $\lambda \in E$ as $\mathfrak{g}_\infty$-crystals (Theorem \ref{thm:level zero realization}). 
On the other hand, Kwon \cite{K15, K16} introduces another combinatorial model, which is called a spinor model, 
to describe $B(\lambda)$ for $\lambda \in P^+$ over Lie (super)algebras of type $BCD$ (see Theorem \ref{thm:spinor}). 

Using these combinatorial models, we explain the algebra structure of the Grothendieck ring $\mathcal{K}$ for a category $\mathcal{C}$ of some extremal weight crystals (Theorem \ref{thm:whole Grothendieck}). 
This result is a BCD-type analogue of the characterization of the Grothendieck ring 
given by \cite{K09b} when $\mathfrak{g}_\infty = \mathfrak{a}_{+\infty}$, and by \cite{K11} when $\mathfrak{g}_\infty = \mathfrak{a}_\infty$.
As similarly as \eqref{eqn:intro decomp}, we have 
\[ \mathcal{K} \,\cong\, \mathcal{K}^0 \otimes \mathcal{K}^+, \]
where $\mathcal{K}^0$ and $\mathcal{K}^+$ are subalgebras corresponding to $B(\lambda)$ for $\lambda \in E$ and $\lambda \in P^+_{\rm int}$, respectively. 
Here, $P^+_{\rm int}$ is a subset of $P^+$ with integer coordinates (see Section \ref{subsec:inf rank Lie alg}).
It is known in \cite{Le09} that $\mathcal{K}^0$ is isomorphic to the ring of symmetric functions. 
To describe $\mathcal{K}^+$ explicitly, we obtain the Jacobi-Trudi type character formula of $B(\lambda)$ for $\lambda \in P^+_{\rm int}$ (Proposition \ref{prop:dominant wt character}). 
When a Lie algebra $\mathfrak{g}$ is of finite type, we know the character formula of an irreducible highest weight $\mathfrak{g}$-crystal (see \cite{FH91,KT87,Macd} and references therein). 
In particular, we have well-known Jacobi-Trudi type formulas, which is a determinant formula whose entries are complete symmetric polynomials or elementary symmetric polynomials. 
In this paper, we obtain the Jacobi-Trudi type character formula of $B(\lambda)$ ($\lambda \in P^+_{\rm int}$) 
in terms of modified elementary symmetric functions $E_r$ based on the observation found in \cite{LZ06}.
Using the Jacobi-Trudi type formula, we show that $\mathcal{K}^+$ is isomorphic to the ring of power series (Theorem \ref{thm:Grothendieck positive}). 
The relation between $\mathcal{K}^0$ and $\mathcal{K}^+$ will be explained in Proposition \ref{prop:tensor decomp} and Theorem \ref{thm:whole Grothendieck}.

Unfortunately, our results do not cover the whole weight lattice when $\mathfrak{g}_\infty = \mathfrak{b}_\infty$ or $\mathfrak{d}_\infty$. 
Indeed, our statements hold only when the extremal weight has an even level when $\mathfrak{g}_\infty = \mathfrak{b}_\infty$ or $\mathfrak{d}_\infty$ (cf. Remark \ref{rem:half-integer weight}).
We have partial results for odd levels when $\mathfrak{g}_\infty = \mathfrak{b}_\infty$ or $\mathfrak{d}_\infty$, 
but it is still mysterious to find the entire results.

This paper is organized as follows. In Section \ref{sec:review} and \ref{sec:ext wt crystal}, we briefly review some necessary backgrounds:
affine Lie algebras of infinite rank and related notations (Section \ref{sec:review}), 
and extremal weight crystals and their properties (Section \ref{sec:ext wt crystal}). 
In Section \ref{sec:combinatorial realization}, we introduce combinatorial models to describe extremal weight crystals 
and show that they are combinatorial realizations of some extremal weight crystals. 
In Section \ref{sec:Jacobi-Trudi}, we define the Jacobi-Trudi type formula using modified elementary symmetric functions $E_r$ 
and show that it is the character of an extremal weight crystal. 
In Section \ref{sec:Grothendieck}, we explain the algebra structure of the Grothendieck ring $\mathcal{K}$ for the category of some extremal weight crystals.

{\bf Acknowledgement.} The author appreciates Jae-Hoon Kwon for his kind comments on the draft. 

\section{Preliminaries} \label{sec:review}
\subsection{Lie algebras of infinite rank} \label{subsec:inf rank Lie alg}
A Lie algebra of infinite rank means a Kac-Moody algebra associated with a generalized Cartan matrix of infinite rank. 
We say that a Lie algebra of infinite rank is of affine type if every finite order principal minor of the associated Cartan matrix is positive.
There are five (non-isomorphic) affine Lie algebras of infinite rank and 
these are referred to Lie algebras $\mathfrak{a}_{+\infty}, \mathfrak{a}_\infty, \mathfrak{b}_\infty, \mathfrak{c}_\infty$, and $\mathfrak{d}_\infty$ (cf. \cite{Kac}). 
The below diagrams are Dynkin diagrams corresponding to each affine Lie algebra of infinite rank.
\[ \mathfrak{a}_{+\infty} \quad:\quad \raisebox{-0.5em}{$\begin{tikzpicture}
    \node at (0, 0) {$\circ$};
    \node [above] at (0, 0.1) {$1$};
    \node at (1.5, 0) {$\circ$};
    \node [above] at (1.5, 0.1) {$2$};
    \node at (3, 0) {$\cdots$};
    \node at (4.5, 0) {$\circ$};
    \node [above] at (4.5, 0.1) {$n$};
    \node [right] at (6, 0) {$\cdots$};

    \draw (0.2+1.5*0, 0) -- (1.3+1.5*0, 0);
    \draw (0.2+1.5*1, 0) -- (1.2+1.5*1, 0);
    \draw (0.3+1.5*2, 0) -- (1.3+1.5*2, 0);
    \draw (0.3+1.5*3, 0) -- (1.3+1.5*3, 0);
\end{tikzpicture}$} \]

\[ \mathfrak{a}_\infty \quad:\quad \raisebox{-0.5em}{$\begin{tikzpicture}[xscale=0.9]
    \node at (0, 0) {$\circ$};
    \node [above] at (0, 0.1) {$0$};
    \node at (1.5, 0) {$\circ$};
    \node [above] at (1.5, 0.1) {$1$};
    \node at (-1.5, 0) {$\circ$};
    \node [above] at (-1.5, 0.1) {$-1$};
    \node at (3, 0) {$\cdots$};
    \node at (-3, 0) {$\cdots$};
    \node at (4.5, 0) {$\circ$};
    \node [above] at (4.5, 0.1) {$n$};
    \node at (-4.5, 0) {$\circ$};
    \node [above] at (-4.5, 0.1) {$-n$};
    \node [right] at (4.5, 0) {$\cdots$};
    \node [left] at (-4.5, 0) {$\cdots$};

    \draw (0.2+1.5*0, 0) -- (1.3+1.5*0, 0);
    \draw (0.2-1.5*1, 0) -- (1.3-1.5*1, 0);
    \draw (0.2+1.5*1, 0) -- (1.2+1.5*1, 0);
    \draw (0.3-1.5*2, 0) -- (1.3-1.5*2, 0);
    \draw (0.3+1.5*2, 0) -- (1.3+1.5*2, 0);
    \draw (0.2-1.5*3, 0) -- (1.2-1.5*3, 0);
\end{tikzpicture}$} \]

\[ \mathfrak{b}_\infty \quad:\quad \raisebox{-0.5em}{$\begin{tikzpicture}
    \node at (0, 0) {$\circ$};
    \node [above] at (0, 0.1) {$0$};
    \node at (1.5, 0) {$\circ$};
    \node [above] at (1.5, 0.1) {$1$};
    \node at (3, 0) {$\circ$};
    \node [above] at (3, 0.1) {$2$};
    \node at (4.5, 0) {$\circ$};
    \node [above] at (4.5, 0.1) {$3$};
    \node at (6, 0) {$\cdots$};
    \node at (7.5, 0) {$\circ$};
    \node [above] at (7.5, 0.1) {$n-1$};
    \node at (9, 0) {$\cdots$};
    
    \draw (0.2+1.5*0, 0.05) -- (1.3+1.5*0, 0.05);
    \draw (0.2+1.5*0, -0.05) -- (1.3+1.5*0, -0.05);
    \draw (0.17, 0) -- (0.3, 0.2);
    \draw (0.17, 0) -- (0.3, -0.2);
    \draw (0.2+1.5*1, 0) -- (1.3+1.5*1, 0);
    \draw (0.2+1.5*2, 0) -- (1.3+1.5*2, 0);
    \draw (0.2+1.5*3, 0) -- (1.2+1.5*3, 0);
    \draw (0.3+1.5*4, 0) -- (1.3+1.5*4, 0);
    \draw (0.2+1.5*5, 0) -- (1.2+1.5*5, 0);
\end{tikzpicture}$} \]

\[ \mathfrak{c}_\infty \quad:\quad \raisebox{-0.5em}{$\begin{tikzpicture}
    \node at (0, 0) {$\circ$};
    \node [above] at (0, 0.1) {$0$};
    \node at (1.5, 0) {$\circ$};
    \node [above] at (1.5, 0.1) {$1$};
    \node at (3, 0) {$\circ$};
    \node [above] at (3, 0.1) {$2$};
    \node at (4.5, 0) {$\circ$};
    \node [above] at (4.5, 0.1) {$3$};
    \node at (6, 0) {$\cdots$};
    \node at (7.5, 0) {$\circ$};
    \node [above] at (7.5, 0.1) {$n-1$};
    \node at (9, 0) {$\cdots$};
    
    \draw (0.2+1.5*0, 0.05) -- (1.3+1.5*0, 0.05);
    \draw (0.2+1.5*0, -0.05) -- (1.3+1.5*0, -0.05);
    \draw (1.33, 0) -- (1.2, 0.2);
    \draw (1.33, 0) -- (1.2, -0.2);
    \draw (0.2+1.5*1, 0) -- (1.3+1.5*1, 0);
    \draw (0.2+1.5*2, 0) -- (1.3+1.5*2, 0);
    \draw (0.2+1.5*3, 0) -- (1.2+1.5*3, 0);
    \draw (0.3+1.5*4, 0) -- (1.3+1.5*4, 0);
    \draw (0.2+1.5*5, 0) -- (1.2+1.5*5, 0);
\end{tikzpicture}$} \]

\[ \mathfrak{d}_\infty \quad:\quad \raisebox{-2em}{$\begin{tikzpicture}
    \node at (0, 0.5) {$\circ$};
    \node [above] at (0, 0.6) {$0$};
    \node at (0, -0.5) {$\circ$};
    \node [above] at (0, -0.4) {$1$};
    \node at (1.5, 0) {$\circ$};
    \node [above] at (1.5, 0.1) {$2$};
    \node at (3, 0) {$\circ$};
    \node [above] at (3, 0.1) {$3$};
    \node at (4.5, 0) {$\circ$};
    \node [above] at (4.5, 0.1) {$4$};
    \node at (6, 0) {$\cdots$};
    \node at (7.5, 0) {$\circ$};
    \node [above] at (7.5, 0.1) {$n$};
    \node at (9, 0) {$\cdots$};
    
    \draw (0.2+1.5*0, 0.5) -- (1.3+1.5*0, 0.1);
    \draw (0.2+1.5*0, -0.5) -- (1.3+1.5*0, -0.1);
    \draw (0.2+1.5*1, 0) -- (1.3+1.5*1, 0);
    \draw (0.2+1.5*2, 0) -- (1.3+1.5*2, 0);
    \draw (0.2+1.5*3, 0) -- (1.2+1.5*3, 0);
    \draw (0.3+1.5*4, 0) -- (1.3+1.5*4, 0);
    \draw (0.2+1.5*5, 0) -- (1.2+1.5*5, 0);
\end{tikzpicture}$} \]

In this paper, we focus on providing results for $\mathfrak{g} = \mathfrak{b}_\infty, \mathfrak{c}_\infty$, or $\mathfrak{d}_\infty$. 
The corresponding results to ours can be found in \cite{K09b} when $\mathfrak{g} = \mathfrak{a}_{+\infty}$ 
and in \cite{K11} when $\mathfrak{g} = \mathfrak{a}_\infty$.
We use the following notations for affine Lie algebras of infinite rank.
\begin{itemize}
    \item $I = \mathbb{Z}_+$ : the index set (the set of nonnegative integers)
    \item $\Pi = \{\, \alpha_i \,|\, i \in I \,\}$ : the set of simple roots
    \item $\Pi^\vee = \{\, \alpha_i^\vee \,|\, i \in I \,\}$ : the set of simple coroots
    \item $\{\, \Lambda_i^\mathfrak{g} \,|\, i \in I \}$ : the set of fundamental weights
    \item $\displaystyle P = \mathbb{Z}\Lambda_0^\mathfrak{g} \,\oplus\, \bigoplus_{i=1}^\infty \mathbb{Z}\epsilon_i$ : the weight lattice, where $\epsilon_i$ is the $i$-th standard basis
    \item $P^+$ : the set of dominant weights, \qquad $\displaystyle E \,=\, \bigoplus_{i=1}^\infty\, \mathbb{Z}\epsilon_i \ \subseteq P$
    \item $\displaystyle Q = \bigoplus_{i \in I} \mathbb{Z}\alpha_i$ : the root lattice
    \item $\displaystyle Q^+ = \bigoplus_{i \in I} \mathbb{Z}_+ \alpha_i$
    \item $W = \langle s_i \,(i \in I) \,|\, s_i^2 = 1, (s_is_j)^{m_{ij}} = 1 \rangle$ : the Weyl group
\end{itemize}
In particular, we take simple roots $\alpha_i$ as follows and we obtain the corresponding fundamental weights.
\begin{eqnarray*}
    \mathfrak{c}_\infty & & \alpha_0 = -2\epsilon_1, \quad \alpha_i = \epsilon_i - \epsilon_{i+1} \ (i \geq 1) \\
    && \Lambda_0^\mathfrak{c} = -\sum_{i=1}^\infty \epsilon_i, \quad \Lambda_i^\mathfrak{c} = \Lambda_0^\mathfrak{c} + (\epsilon_1 + \dots + \epsilon_i) \ (i \geq 1) \\
    \mathfrak{b}_\infty & & \alpha_0 = -\epsilon_1, \quad \alpha_i = \epsilon_i - \epsilon_{i+1} \ (i \geq 1) \\
    && \Lambda_0^\mathfrak{b} = -\frac{1}{2}\sum_{i=1}^\infty \epsilon_i, \quad \Lambda_i^\mathfrak{b} = 2\Lambda_0^\mathfrak{b} + (\epsilon_1 + \dots + \epsilon_i) \ (i \geq 1) \\
    \mathfrak{d}_\infty & & \alpha_0 = -\epsilon_1 - \epsilon_2, \quad \alpha_i = \epsilon_i - \epsilon_{i+1} \ (i \geq 1) \\
    && \Lambda_0^\mathfrak{d} = -\frac{1}{2}\sum_{i=1}^\infty \epsilon_i, \quad \Lambda_1^\mathfrak{d} = \Lambda_0^\mathfrak{d} + \epsilon_1, \quad \Lambda_i^\mathfrak{d} = 2\Lambda_0^\mathfrak{d} + (\epsilon_1 + \dots + \epsilon_i) \ (i \geq 2)
\end{eqnarray*}
Moreover, when we write an element $\lambda = \sum m_i\epsilon_i \in P$, we have $\lambda \in P^+$ if and only if $\lambda$ satisfies that $m_i -m_{i+1} \in \mathbb{Z}$ for all $i \geq 1$ and 
\begin{itemize}
    \item when $\mathfrak{g} = \mathfrak{c}_\infty$, $0 \geq m_1 \geq m_2 \geq m_3 \geq \dots$ with $m_i \in \mathbb{Z}$,
    \item when $\mathfrak{g} = \mathfrak{b}_\infty$, $0 \geq m_1 \geq m_2 \geq m_3 \geq \dots$ with $m_i \in \mathbb{Z}$ or $m_i \in \frac{1}{2}+\mathbb{Z}$,
    \item when $\mathfrak{g} = \mathfrak{d}_\infty$, $0 \geq |m_1| \geq m_2 \geq m_3 \geq \dots$ with $m_i \in \mathbb{Z}$ or $m_i \in \frac{1}{2}+\mathbb{Z}$.
\end{itemize}
We often write the expression $\mathfrak{g} = \mathfrak{b}, \mathfrak{c}, \mathfrak{d}$ 
when we consider only the type of a Lie algebra $\mathfrak{g}$ without specifying its rank.

The level of $\lambda \in P$, denoted by ${\rm lv}(\lambda)$, is defined to be the value $\langle \lambda, K \rangle$, 
where $K$ is the canonical central element of affine Lie algebras of infinite rank (cf. \cite[Section 7.12]{Kac}). 
In particular, when we write $\lambda = \sum_i m_i\epsilon_i$, we have $m_i = -\frac{\epsilon_\mathfrak{g}}{2}{\rm lv}(\lambda)$ for some sufficiently large $i$, where
\[ \epsilon_\mathfrak{g} = \begin{cases}
    2 & \mbox{if } \mathfrak{g} = \mathfrak{c}, \\
    1 & \mbox{if } \mathfrak{g} = \mathfrak{b}\mbox{ or }\mathfrak{d}.
\end{cases} \]
It is clear that the level of dominant weights is nonnegative and $0 \in P$ is the unique level-zero dominant weight. 





Let $e_i, f_i$ ($i \in I$) be the Chevalley generators of $\mathfrak{g}_\infty$ ($\mathfrak{g} = \mathfrak{b}, \mathfrak{c}$, or $\mathfrak{d}$). 
For integers $n \geq 2$, let $\mathfrak{g}_n$ be the Lie subalgebra of $\mathfrak{g}_\infty$ generated by $e_i, f_i$ for $i = 0, 1, \dots, n-1$. 
For each $\mathfrak{g} = \mathfrak{b}, \mathfrak{c}, \mathfrak{d}$ with $n \geq 2$ ($n \geq 4$ when $\mathfrak{g} = \mathfrak{d}$), 
$\mathfrak{g}_n$ is isomorphic to $\mathfrak{so}_{2n+1}, \mathfrak{sp}_{2n}, \mathfrak{so}_{2n}$, respectively. 
This paper does not consider $\mathfrak{d}_2$ and $\mathfrak{d}_3$ (which is actually isomorphic to $\mathfrak{a}_3$) 
although the condition we will address seems to encompass two cases.

Now, we introduce some notations related to partitions. Let $\mathcal{P}$ be the set of partitions. 
Denote by $\ell(\lambda)$ the length of $\lambda \in \mathcal{P}$, by $|\lambda|$ the size of $\lambda \in P$, and 
let $\mathcal{P}_n \subseteq \mathcal{P}$ ($n \in \mathbb{Z}_+$) be the set of partitions $\lambda$ with $\ell(\lambda) \leq n$. 
Let $\lambda' = (\lambda_1', \lambda_2', \dots)$ be the conjugate of $\lambda \in \mathcal{P}$. 
For $\lambda \in \mathcal{P}_n$, set
\[ \varpi_\lambda = \sum_{i=1}^n \lambda_i \epsilon_i \in E. \]
For simplicity, we write $\varpi_i = \varpi_{(1^i)}$ for $i \geq 1$ and $\varpi_0 = 0$. 

For even $\ell \geq 2$, let $G_\ell$ be one of the following algebraic groups: ${\rm Sp}_\ell$, ${\rm Pin}_\ell$, or ${\rm O}_\ell$. Let
\begin{eqnarray*}
    && \mathscr{P}({\rm Sp}_\ell) = \mathcal{P}_{\frac{\ell}{2}}, \qquad
    \mathscr{P}({\rm Pin}_\ell) = \mathcal{P}_{\frac{\ell}{2}}, \\
    && \mathscr{P}({\rm O}_\ell) = \{ \lambda \in \mathcal{P}_\ell \,|\, \lambda_1' + \lambda_2' \leq \ell \,\}
\end{eqnarray*}
and, define
\[ \mathscr{P}(G) = \left. \left\{ \left( \lambda, \ell \right) \,\right|\, \ell \in \mathbb{N}, \lambda \in \mathscr{P}(G_{2\ell}) \right\}, \]
which plays a role of the disjoint union of $\mathscr{P}(G_\ell)$ for $G = {\rm Sp}$, ${\rm Pin}$, and ${\rm O}$. 
We suppose that a Lie algebra $\mathfrak{g}$ corresponds to an algebraic group $G$ (and vice versa) as follows:
$(\mathfrak{g}, G) = (\mathfrak{b}, {\rm Pin}), (\mathfrak{c}, {\rm Sp})$, and $(\mathfrak{d}, {\rm O})$.
We put
\begin{itemize}
    \item $\Pi_i^\mathfrak{c} = \Lambda_i^\mathfrak{c}$ \ ($i \geq 0$)
    \item $\Pi_0^\mathfrak{b} = 2\Lambda_0^\mathfrak{b}$, $\Pi_i^\mathfrak{b} = \Lambda_i^\mathfrak{b}$ \ ($i \geq 1$)
    \item $\Pi_0^\mathfrak{d} = 2\Lambda_0^\mathfrak{d}$, $\overline{\Pi}_0^\mathfrak{d} = 2\Lambda_1^\mathfrak{d}$, 
    $\Pi_1^\mathfrak{d} = \Lambda_0^\mathfrak{d} + \Lambda_1^\mathfrak{d}$, and $\Pi_i^\mathfrak{d} = \Lambda_i^\mathfrak{d}$ \ ($i \geq 2$)
\end{itemize}
and
\[ \Pi^\mathfrak{g}(\lambda, \ell) = \ell\,\Pi_0^\mathfrak{g} + \varpi_{\lambda'} \in P^+ \]
for $(\lambda, \ell) \in \mathscr{P}(G)$. For $(\lambda, \ell) \in \mathscr{P}(G)$ with $\ell(\lambda) = t$, we have
\begin{equation*}
    \Pi^\mathfrak{g}(\lambda, \ell) = \begin{cases}
        \Pi^\mathfrak{g}_{\lambda_1} + \dots + \Pi^\mathfrak{g}_{\lambda_\ell} & \mbox{if }t \leq \ell, \\
        \Pi^\mathfrak{d}_{\lambda_1} + \dots + \Pi^\mathfrak{d}_{\lambda_{2\ell-t}} + (t-\ell) \overline{\Pi}_0^\mathfrak{d} & \mbox{if } t > \ell.
    \end{cases}
\end{equation*}
The condition $(\lambda, \ell) \in \mathscr{P}(G)$ with $\ell(\lambda) > \ell$ holds only when $(\mathfrak{g}, G) = (\mathfrak{d}, {\rm O})$. 
In this case, we can check
\begin{equation} \label{eqn:well-defined partition}
    \lambda_{2\ell-t} \geq 1 \mbox{ and } (\lambda_1, \dots, \lambda_{2\ell-t}, 1^{t-\ell}) \mbox{ is a well-defined partition.}
\end{equation}

Let
\[ P^+_{\rm int} = \{ \,\Pi^\mathfrak{g}(\lambda, \ell) \,|\, (\lambda, \ell) \in \mathscr{P}(G) \,\} \subseteq P^+. \]
Note that $P^+_{\rm int} = P^+$ when $\mathfrak{g} = \mathfrak{c}_\infty$ and,
$P^+_{\rm int} \subsetneq P^+$ is the set of dominant weights with a positive even level when $\mathfrak{g} = \mathfrak{b}_\infty$ or $\mathfrak{d}_\infty$. 

\begin{rem} \label{rem:half-integer weight}
    The correspondences $(\mathfrak{g}, G)$ are closely related to the dual pair, which is introduced by Howe (cf. \cite{H89,H95,W99}).
    In general, let $\mathscr{F}$ be the fermionic Fock space given in \cite{W99}.
    Then we have the following $(\mathfrak{g}, G_\ell)$-duality on $\mathscr{F}^{\otimes \frac{\ell}{2}}$ for even $\ell \geq 2$.
    \[ \mathscr{F}^{\otimes \frac{\ell}{2}} \cong \bigoplus_{\lambda \in \mathscr{P}(G_\ell)} L(\mathfrak{g}, \Lambda^\mathfrak{g}(\lambda, \ell)) \otimes V^\lambda_{G_\ell} \]
    Here, $L(\mathfrak{g}, \Lambda^\mathfrak{g}(\lambda, \ell))$ is the irreducible $\mathfrak{g}$-modules 
    with highest weight $\Lambda^\mathfrak{g}(\lambda, \ell) = \frac{\ell}{\epsilon_\mathfrak{g}} \Lambda_0^\mathfrak{g} + \varpi_{\lambda'} \in P^+$, and
    $V^\lambda_{G_\ell}$ is the finite-dimensional irreducible representation of $G_\ell$ corresponding to $\lambda$. 
    In addition, the $(\mathfrak{g}, G_\ell)$-duality theorem holds for odd $\ell \geq 1$, where the corresponding dual pairs are $(\mathfrak{b}_\infty, {\rm Spin}_\ell)$ and $(\mathfrak{d}_\infty, {\rm O}_\ell)$ 
    and $\mathscr{F}^{\otimes (k + \frac{1}{2})}$ ($k \in \mathbb{Z}_+$) is the Fock space of $k$ pairs of fields with an extra neutral field (cf. \cite{W99}).

    In general, we know that $\mathscr{P}(G_\ell)$ parametrizes the set of all dominant weights of $\mathfrak{g}$ with level $\ell$, 
    which implies that $P^+ = \{ \Lambda^\mathfrak{g}(\lambda, \ell) \,|\, \ell \geq 1, \lambda \in \mathscr{P}(G_\ell) \} \cup \{ 0 \}$.
    In particular, we have $\Lambda^\mathfrak{g}(\lambda, 2\ell) = \Pi^\mathfrak{g}(\lambda, \ell)$ for $(\lambda, \ell) \in \mathscr{P}(G)$.
\end{rem}

\subsection{Crystals} \label{subsec:crystal}
We recall the notion of (abstract) crystals (see \cite{HongKang,Kas95} for details). 
Let $\mathfrak{g}$ be a symmetrizable Kac-Moody algebra associated with a generalized Cartan matrix $A = (a_{ij})_{i, j \in I}$. 
A $\mathfrak{g}$-crystal (or simply crystal) is a set $B$ equipped with maps ${\rm wt} : B \to P$, $\widetilde{e}_i, \widetilde{f}_i : B \to B \cup \{ {\bf 0} \}$, 
and $\varepsilon_i, \varphi_i : B \to \mathbb{Z} \cup \{ -\infty \}$ for $i \in I$ satisfying certain axioms. 
Here, ${\bf 0}$ is a formal symbol and $-\infty$ is another formal symbol with $-\infty + (-\infty) = -\infty$, and 
$-\infty < n$, $-\infty + n = -\infty$ for any $n \in \mathbb{Z}$.
For example, let $V(\lambda)$ be the irreducible highest weight $U_q(\mathfrak{g})$-module of highest weight $\lambda \in P^+$. 
There exists the (unique) crystal base $B(\lambda)$ associated with $V(\lambda)$ (see \cite{Kas91}) such that
\[ \varepsilon_i(b) = \max\{\, k \in \mathbb{Z}_+ \,|\ \widetilde{e}_i^kb \neq 0 \,\}, \quad \varphi_i(b) = \max\{\, k \in \mathbb{Z}_+ \,|\, \widetilde{f}_i^kb \neq 0 \,\}. \]
Note that the crystal base $B(\lambda)$ still satisfies the axiom for crystals.

We also review the notion related to semistandard tableaux (cf. \cite{Ful}), which is frequently chosen to describe crystals.
A semistandard tableau of shape $\lambda \in \mathcal{P}$ is a filling of $\lambda$ whose entries are weakly increasing from left to right in each row and strictly increasing from top to bottom in each column. 
Let $SST(\lambda)$ be the set of semistandard tableaux of shape $\lambda$ with letters in $\mathbb{N}$.
In particular, we call a semistandard tableau with a single column a column tableau.
In this case, set the height of a column tableau $C$, denoted by ${\rm ht}(C)$, to be the number of its cells.
When we want to emphasize a letter set $X$, we record it as a prefix or subscript. 
In particular, when $X = \{ 1, \dots, n \}$, we record $n$ instead of $X$, 
i.e., $SST_n(\lambda)$ is the set of $n$-semistandard tableaux of shape $\lambda \in \mathcal{P}$.

It is well-known that the set $SST_n(\lambda)$ ($\lambda \in \mathcal{P}_n$) is a $\mathfrak{gl}_n$-crystal and is isomorphic to $B(\varpi_\lambda)$ as $\mathfrak{gl}_n$-crystals. 
Note that its highest weight vector is the tableau $T_\lambda$ of shape $\lambda$ such that
\begin{equation} \label{eqn:ext wt vec for partition}
    T_\lambda(i, j) = i, \quad \mbox{for all $1 \leq i \leq \ell(\lambda)$ and $1 \leq j \leq \lambda_i$},
\end{equation}
where $T(i, j)$ is the $(i, j)$-entry of a given tableau $T$.

The tensor product $B_1 \otimes B_2$ of crystals $B_1$ and $B_2$ is the set $B_1 \times B_2$ equipped with the following maps: 
for $i \in I$ and $b_i \in B_i$ ($i = 1, 2$),
\begin{itemize}
    \item ${\rm wt}(b_1 \otimes b_2) = {\rm wt}(b_1) + {\rm wt}(b_2)$
    \item $\varepsilon_i(b_1 \otimes b_2) = \max\{\, \varepsilon(b_1),\ \varepsilon_i(b_2) - \langle {\rm wt}(b_1), \alpha_i^\vee \rangle \,\}$
    \item $\varphi_i(b_1 \otimes b_2) = \max\{\, \varphi(b_1) + \langle {\rm wt}(b_2), \alpha_i^\vee \rangle,\ \varphi_i(b_2) \,\}$
    \item $\widetilde{e}_i(b_1 \otimes b_2) = \begin{cases}
        \widetilde{e}_ib_1 \otimes b_2, & \mbox{if } \varphi_i(b_1) \geq \varepsilon_i(b_2), \\
        b_1 \otimes \widetilde{e}_ib_2, & \mbox{if } \varphi_i(b_1) < \varepsilon_i(b_2)
    \end{cases}$
    \item $\widetilde{f}_i(b_1 \otimes b_2) = \begin{cases}
        \widetilde{f}_ib_1 \otimes b_2, & \mbox{if } \varphi_i(b_1) > \varepsilon_i(b_2), \\
        b_1 \otimes \widetilde{f}_ib_2, & \mbox{if } \varphi_i(b_1) \leq \varepsilon_i(b_2).
    \end{cases}$
\end{itemize}
Here, we write $(b_1, b_2) \in B_1 \otimes B_2$ as $b_1 \otimes b_2$, and 
assume that ${\bf 0} \otimes b_2 = b_1 \otimes {\bf 0} = {\bf 0}$ for any $b_1 \in B_1$ and $b_2 \in B_2$.

For a crystal $B$, denote by $C(b)$ the connected component of $b \in B$. 
For two crystals $B_1$ and $B_2$, we say that $b_1 \in B_1$ and $b_2 \in B_2$ are crystal equivalent (to each other) 
if there exists an isomorphism $\phi : C(b_1) \to C(b_2)$ of crystals sending $b_1$ to $b_2$.

\section{Extremal weight crystals} \label{sec:ext wt crystal}
\subsection{Extremal weight crystals} \label{subsec:ext wt crystal}
We review properties of extremal weight crystals (see \cite{Kas94,Kas02} and references therein).
\begin{df}
    Let $M$ be an integrable $U_q(\mathfrak{g})$-module. A weight vector $v \in M$ of weight $\lambda \in P$ is said to be extremal 
    if there exists a family $\{ v_w \}_{w \in W}$ of weight vectors in $M$ such that 
    if we put $m = \langle h_i, w\lambda \rangle$ for $i \in I$ and $w \in W$, then
    \begin{itemize}
        \item $v_e = v$,
        \item we have $e_iv_w = 0$ and $f_i^{(m)}v_w = v_{s_iw}$ when $m \geq 0$,
        \item we have $f_iv_w = 0$ and $e_i^{(-m)}v_w = v_{s_iw}$ when $m \leq 0$.
    \end{itemize}
\end{df}

For $\lambda \in P$, Kashiwara \cite{Kas94} constructs an (integrable) extremal weight $U_q(\mathfrak{g})$-module $V(\lambda)$ 
with an extremal weight vector $v_\lambda \in V(\lambda)$ of weight $\lambda$. 
In particular, $V(\lambda)$ is a highest (resp. lowest) weight module when $\lambda \in P^+$ (resp. $\lambda \in -P^+$). 
However, $V(\lambda)$ is neither a highest weight module nor a lowest weight module, in general.
Nonetheless, it still has a significant property; it has a crystal base.
\begin{thm}[\cite{Kas94}]
    For $\lambda \in P$, the extremal weight module $V(\lambda)$ has a crystal base $(L(\lambda), B(\lambda))$.
\end{thm}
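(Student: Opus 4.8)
The plan is to deduce the statement from Kashiwara's crystal basis of the \emph{modified} quantized enveloping algebra $\widetilde{U} = \widetilde{U}_q(\mathfrak{g}) = \bigoplus_{\mu \in P} U_q(\mathfrak{g})\, a_\mu$, where the $a_\mu$ are the orthogonal idempotents indexing weight spaces. The foundational input, which I would cite as the hard technical core, is Kashiwara's theorem that $\widetilde{U}$ admits a crystal basis $(\mathscr{L}(\widetilde{U}), \mathscr{B}(\widetilde{U}))$ (in fact a global basis). Viewing $\widetilde{U}$ as a left $U_q(\mathfrak{g})$-module it splits as $\bigoplus_\mu U_q(\mathfrak{g}) a_\mu$, and the summand $U_q(\mathfrak{g}) a_\lambda$ inherits a crystal basis whose crystal is isomorphic, as a $\mathfrak{g}$-crystal, to $B(\infty) \otimes T_\lambda \otimes B(-\infty)$; here $B(\pm\infty)$ are the crystals of $U_q^\mp(\mathfrak{g})$ and $T_\lambda = \{t_\lambda\}$ is the one-point crystal of weight $\lambda$.

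Next I would pin down $V(\lambda)$ inside this picture. The extremal weight module is the cyclic module generated by a vector $v_\lambda$ subject exactly to the extremality relations, i.e. the quotient of $U_q(\mathfrak{g}) a_\lambda$ by the submodule generated by $f_i^{(m_i+1)} a_\lambda$ for $i$ with $m_i := \langle h_i, \lambda\rangle \geq 0$ and by $e_i^{(-m_i+1)} a_\lambda$ for $i$ with $m_i \leq 0$. I would then set $L(\lambda)$ to be the image of $\mathscr{L}(\widetilde{U}) \cap U_q(\mathfrak{g}) a_\lambda$ under the projection, and let $B(\lambda)$ be the connected component of the class $b_\lambda$ of $a_\lambda$ (equivalently of $u_\infty \otimes t_\lambda \otimes u_{-\infty}$) in $\mathscr{B}(\widetilde{U})/q$. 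The point is that the defining relations are generated by vectors sitting at the end of their $i$-strings, so the kernel is compatible with $(\mathscr{L}(\widetilde{U}), \mathscr{B}(\widetilde{U}))$: it is spanned over the base ring by a sublattice together with a union of crystal basis elements.

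Granting this compatibility, the crystal basis axioms for $(L(\lambda), B(\lambda))$ transfer formally from those for $\widetilde{U}$. Concretely I would check: $L(\lambda)$ is a free lattice over $\mathbb{A} = \mathbb{Q}(q) \cap \mathbb{Q}[[q]]$ stable under $\widetilde{e}_i, \widetilde{f}_i$; that $B(\lambda)$ is a basis of $L(\lambda)/qL(\lambda)$; and that the Kashiwara operators $\widetilde{e}_i, \widetilde{f}_i$ restrict to partially defined bijections on $B(\lambda)$, with $\varepsilon_i, \varphi_i$ reading off string lengths. Each of these is inherited from the corresponding property of $(\mathscr{L}(\widetilde{U}), \mathscr{B}(\widetilde{U}))$, because passing to the quotient simply deletes the subcrystal forming the kernel and keeps the connected component $B(\lambda)$ intact.

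The main obstacle is the cited input itself: establishing that $\widetilde{U}$ carries a crystal (and global) basis. This is proved by Kashiwara's simultaneous ``grand loop'' induction, which intertwines the existence of the crystal lattice, the polynomial behavior of the Kashiwara operators, and the almost-orthonormality of the basis, and which leans on Lusztig's canonical basis. A second, extremal-specific subtlety is the compatibility of the quotient map with the crystal lattice: one must know that the family $\{v_w\}_{w \in W}$ demanded by extremality already exists at the level of crystals. This is guaranteed by the Weyl group action on crystals via Saito's reflection operators $S_i$, so that $b_\lambda$ is an extremal element and $\widetilde{e}_i b_\lambda = \mathbf{0}$ or $\widetilde{f}_i b_\lambda = \mathbf{0}$ according to the sign of $\langle h_i, \lambda\rangle$, which is exactly what makes the defining relations sit at string ends and the kernel split off cleanly.
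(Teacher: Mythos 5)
The paper itself gives no proof of this statement: it is quoted verbatim from [Kas94], so the benchmark is Kashiwara's argument, whose broad architecture (crystal and global basis of the modified algebra $\widetilde{U}$, the identification of the crystal of $U_q(\mathfrak{g})a_\lambda$ with $B(\infty)\otimes T_\lambda\otimes B(-\infty)$, and descent to the quotient $V(\lambda)$) you have reproduced correctly in outline. But two load-bearing steps are wrong or unproved. First, your presentation of $V(\lambda)$ is not the extremal weight module. Extremality is a condition on the whole family $\{v_w\}_{w\in W}$, one set of relations per chamber, whereas you quotient $U_q(\mathfrak{g})a_\lambda$ only by the $w=e$ relations --- and even those are miscopied: you impose $f_i^{(m_i+1)}a_\lambda=0$ for $m_i\geq 0$ and $e_i^{(-m_i+1)}a_\lambda=0$ for $m_i\leq 0$ but omit the annihilation relations $e_i a_\lambda=0$ ($m_i\geq 0$) and $f_i a_\lambda=0$ ($m_i\leq 0$), without which the quotient is not even an integrable highest-weight-type object in the $i$-directions. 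Even with the full chamber-$e$ relations, when $\lambda$ is neither dominant nor antidominant (precisely the level-zero case that makes extremal weight modules interesting) the relations at other $w\in W$ are genuinely additional, so your quotient only surjects onto $V(\lambda)$ and is in general strictly larger.

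Second, the compatibility step is asserted exactly where the real work lies. That the defining vectors ``sit at the ends of their $i$-strings'' does not imply that the submodule they generate meets $\mathscr{L}(\widetilde{U})$ in a sublattice spanned by crystal basis elements; there is no such general principle, and this is the hard point of [Kas94]. Kashiwara's actual mechanism is the $*$-involution: he characterizes $B(\lambda)$ as $\{\,b\in B(\infty)\otimes T_\lambda\otimes B(-\infty)\,:\,b^*\ \text{is extremal}\,\}$ and proves that the defining ideal of $V(\lambda)$ is a \emph{based} submodule, i.e.\ spanned by global basis elements $G(b)$ for $b$ outside this subset; only then do the crystal base axioms descend to $(L(\lambda),B(\lambda))$. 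Saito reflections give the Weyl group action on crystals, as you say, but they do not substitute for this based-submodule argument. Relatedly, defining $B(\lambda)$ as the connected component of $b_\lambda$ is unjustified and circular: $B(\lambda)$ is a union of components of the image, and identifying it with a single component presupposes connectedness of $B(\lambda)$, which is not available at this stage of the construction --- indeed, in the present paper connectedness is a separate, later result (Proposition 3.6, due to Naito--Sagaki) special to affine Lie algebras of infinite rank, proved after the crystal base exists.
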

We call such $B(\lambda)$ an extremal weight crystal and 
let $b_\lambda \in B(\lambda)$ be the image of $v_\lambda$ under the canonical projection $V(\lambda) \twoheadrightarrow  B(\lambda)$. 
The extremal weight module $V(\lambda)$ and its crystal $B(\lambda)$ have a close connection to the $W$-orbit of the weight $\lambda \in P$. 
\begin{prop}[\cite{Kas94}, {\cite[Proposition 3.9]{NS12}}] \label{prop:W-orbit isomorphism}
    For $w \in W$, we have isomorphisms $V(\lambda) \cong V(w\lambda)$ and $B(\lambda) \cong B(w\lambda)$. 
    Moreover, when $\mathfrak{g}$ is an affine Lie algebra of infinite rank, for $\lambda, \mu \in P$, 
    we have $\lambda \in W\mu$ if and only if $B(\lambda) \cong B(\mu)$.
\end{prop}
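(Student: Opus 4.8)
The plan is to separate the statement into the two isomorphisms asserted for a fixed $w$ (which is Kashiwara's construction) and the sharper orbit characterization specific to infinite rank, and to treat the converse implication as the real content. First I would recall Kashiwara's universal description of $V(\lambda)$ as the module generated by an extremal weight vector $v_\lambda$ subject to the extremal relations. From the defining family $\{v_w\}_{w \in W}$, each $v_w$ is again an extremal weight vector, now of weight $w\lambda$ (the reindexed family $\{v_{w'w}\}_{w'}$ witnesses this). Hence the universal property supplies a homomorphism $V(w\lambda) \to V(\lambda)$ sending the canonical generator to $v_w$, and symmetrically a homomorphism back; these are mutually inverse, so $V(\lambda) \cong V(w\lambda)$, and passing to crystal bases gives $B(\lambda) \cong B(w\lambda)$. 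In particular $\lambda \in W\mu$ immediately forces $B(\lambda) \cong B(\mu)$, which settles the ``only if'' direction of the \emph{moreover} clause; all of this is \cite{Kas94}.

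For the converse I would restrict to $\mathfrak{g}$ of infinite rank and isolate two invariants of a crystal isomorphism $\phi : B(\lambda) \to B(\mu)$. The first is the level: every weight of $B(\lambda)$ lies in $\lambda + Q$, and since each simple root has level $0$, all weights of $B(\lambda)$ share the value ${\rm lv}(\lambda)$; thus ${\rm lv}(\lambda) = {\rm lv}(\mu)$. The second, and essential, observation is that $b_\lambda$ is an extremal element of $B(\lambda)$ in the crystal sense, stable under the Weyl group action $S_w$ with ${\rm wt}(S_w b_\lambda) = w\lambda$. Because $\phi$ preserves ${\rm wt}$ and commutes with $\widetilde{e}_i$ and $\widetilde{f}_i$, it commutes with each $S_i$ and therefore carries extremal elements to extremal elements. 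Consequently $\phi(b_\lambda)$ is an extremal element of $B(\mu)$ whose weight is ${\rm wt}(b_\lambda) = \lambda$.

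The remaining, and decisive, step is to show that every extremal element of $B(\mu)$ has weight lying in $W\mu$; granting this, $\lambda \in W\mu$ follows at once and the proof closes. The approach I would take is convex-geometric: the weights of $B(\mu)$ lie in the convex hull of the orbit $W\mu$, and an element that is annihilated on the appropriate side by every $\widetilde{e}_i, \widetilde{f}_i$ all along its $W$-orbit must occupy a vertex of this weight polytope. Since $W$ acts on the $\epsilon_i$-coordinates within a fixed level by signed permutations, the orbit $W\mu$ lies on a sphere in the (finitely supported) $\epsilon$-space and hence consists \emph{exactly} of the vertices of its convex hull, giving ${\rm wt}(\phi(b_\lambda)) = \lambda \in W\mu$. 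The hard part will be making this vertex argument rigorous in the infinite rank setting: one must control the unbounded weight lattice and, more seriously, rule out spurious extremal crystal elements whose weights could fall strictly inside the polytope—precisely the point where the ``affine of infinite rank'' hypothesis (which excludes the imaginary-root pathologies present in genuine affine Kac--Moody algebras) is indispensable. This is the analysis carried out in \cite[Proposition 3.9]{NS12}.
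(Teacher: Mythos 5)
The paper itself offers no proof of this proposition: both halves are imported verbatim from \cite{Kas94} and \cite[Proposition 3.9]{NS12}, so your attempt can only be judged against those sources. Your first paragraph is correct and complete: reindexing the extremal family $\{v_{w'w}\}_{w'}$ to exhibit $v_w$ as an extremal vector of weight $w\lambda$, invoking the universal property of $V(w\lambda)$ in both directions, and using that the family is determined by $v_e$ so that the composites fix the generators, is exactly Kashiwara's argument, and the ``only if'' direction of the orbit statement follows at once.

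The converse, however, contains a genuine gap, which you flag yourself: the decisive claim --- that the weight of the extremal element $\phi(b_\lambda) \in B(\mu)$ must lie in $W\mu$ --- is deferred to \cite[Proposition 3.9]{NS12}, which is the very statement being proved, so as a proof the attempt is circular at its key step. Two details of the convex-geometric sketch are also off as stated. First, for ${\rm lv}(\mu)>0$ the weights are \emph{not} finitely supported in the $\epsilon$-coordinates: they stabilize to the nonzero constant $-\frac{\epsilon_\mathfrak{g}}{2}{\rm lv}(\mu)$, so the naive norm diverges and ``$W\mu$ lies on a sphere'' is meaningless without renormalization. Second, ``orbit points are vertices of their hull'' is not the statement you need; you must rule out that $\lambda$, known only to lie in ${\rm Conv}(W\mu)$, sits strictly inside, and nothing in the sketch does this. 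Both problems admit a clean repair that bypasses the analysis of extremal elements entirely. Since $W$ acts on the coordinates $(\nu_i)$ by finitely supported (signed) permutations, the quantity $\Sigma(\nu) = \sum_i \big( \nu_i^2 - (\tfrac{\epsilon_\mathfrak{g}}{2}{\rm lv}(\nu))^2 \big)$ is a finite, $W$-invariant sum. An isomorphism $\phi$ preserves weights, so $\lambda \in {\rm wt}\,B(\mu)$ and $\mu \in {\rm wt}\,B(\lambda)$; by Kashiwara's containment ${\rm wt}\,B(\nu) \subseteq {\rm Conv}(W\nu) \cap (\nu+Q)$ applied in both directions, each of $\lambda, \mu$ is a finite convex combination of points of the other's orbit, and coordinatewise Jensen gives $\Sigma(\lambda) \leq \Sigma(\mu) \leq \Sigma(\lambda)$, hence equality. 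Writing $\lambda = \sum_k c_k\, w_k\mu$ with all $c_k > 0$, equality in Jensen forces $(w_k\mu)_i$ to be independent of $k$ in every coordinate, so all $w_k\mu$ coincide with $\lambda$ and $\lambda \in W\mu$. With this replacement your outline becomes a complete and self-contained proof (and it makes precise where the infinite-rank hypothesis enters: the absence of isotropic directions, i.e.\ of imaginary roots, is what makes $\Sigma$ strictly convex along the relevant finite-dimensional slices); as written, the last step is an appeal to the result being proved.
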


On the other hand, we can find an extremal weight crystal in a certain tensor product of two extremal weight crystals.
\begin{lem}[{\cite[Lemma 8.3.1]{Kas94}}] \label{lem:summand in tensor}
    Let $\lambda \in P^+$ and $\mu \in P$. The connected component $C(b_{\lambda+\mu})$ in $B(\lambda + \mu)$ 
    is isomorphic to the connected component $C(b_\lambda \otimes b_\mu)$ in $B(\lambda) \otimes B(\mu)$. 
    In particular, the isomorphism sends $b_{\lambda+\mu}$ to $b_\lambda \otimes b_\mu$.
\end{lem}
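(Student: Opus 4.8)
The plan is to reduce the whole statement to one assertion: that $b_\lambda\otimes b_\mu$ is an extremal element (in the crystal sense) of weight $\lambda+\mu$ in $B(\lambda)\otimes B(\mu)$, and then to invoke the structural property of extremal weight crystals from \cite{Kas94}, namely that the connected component of an extremal element of weight $\nu$ in the crystal base of an integrable module is isomorphic to $B(\nu)$ by an isomorphism carrying that element to $b_\nu$. Indeed $V(\lambda)\otimes V(\mu)$ is an integrable $U_q(\mathfrak{g})$-module whose crystal base is $B(\lambda)\otimes B(\mu)$, and $v_\lambda\otimes v_\mu$ projects to $b_\lambda\otimes b_\mu$; so once $b_\lambda\otimes b_\mu$ is known to be extremal, the cited property yields an isomorphism $B(\lambda+\mu)\cong C(b_\lambda\otimes b_\mu)$ sending $b_{\lambda+\mu}$ to $b_\lambda\otimes b_\mu$. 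Since $V(\lambda+\mu)=U_q(\mathfrak{g})v_{\lambda+\mu}$ forces $B(\lambda+\mu)$ to be connected, we have $B(\lambda+\mu)=C(b_{\lambda+\mu})$, and this is exactly the claimed isomorphism. (The cited property is itself a shadow of the universal property of $V(\nu)$ among integrable modules generated by an extremal vector of weight $\nu$, which one could instead apply directly to $V(\lambda)\otimes V(\mu)$ with the extremal vector $v_\lambda\otimes v_\mu$.)

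The heart of the matter is therefore to prove that $b_\lambda\otimes b_\mu$ is extremal of weight $\lambda+\mu$. The required family $\{b_w\}_{w\in W}$ is forced by the defining relations (e.g. $\widetilde{f}_i^{\,m}b_w=b_{s_iw}$ when $m=\langle w(\lambda+\mu),\alpha_i^\vee\rangle\ge 0$) to be the orbit $\{S_w(b_\lambda\otimes b_\mu)\}$ under the Weyl group action $S_w$, where $S_{s_i}$ is realized by $\widetilde{f}_i^{\,m}$ or $\widetilde{e}_i^{\,-m}$ according to the sign of $m$. Because the operators $S_i$ satisfy the braid relations on any normal crystal, $S_w$ is well defined, and extremality becomes equivalent to the statement that for every $w\in W$ and $i\in I$ the element $S_w(b_\lambda\otimes b_\mu)$ sits at the top of its $i$-string when $\langle w(\lambda+\mu),\alpha_i^\vee\rangle\ge 0$ and at the bottom when $\le 0$ (that is, $\widetilde{e}_i$, resp.\ $\widetilde{f}_i$, annihilates it). I would prove this by induction on $\ell(w)$. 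The base case $w=e$ is a short computation with the tensor product rules: since $\lambda\in P^+$, the element $b_\lambda$ is a highest weight vector, so $\varepsilon_i(b_\lambda)=0$ and $\varphi_i(b_\lambda)=\langle\lambda,\alpha_i^\vee\rangle\ge 0$, while $b_\mu$, being extremal, is already $i$-extremal; substituting these into the formulas for $\varepsilon_i$ and $\varphi_i$ of a tensor shows $b_\lambda\otimes b_\mu$ is $i$-extremal, on the side prescribed by the sign of $\langle\lambda+\mu,\alpha_i^\vee\rangle$.

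The main obstacle is the inductive step, and it is here that dominance of $\lambda$ is essential. The difficulty is that the Weyl group action does \emph{not} respect the tensor product: in general $S_w(b_\lambda\otimes b_\mu)\neq (S_wb_\lambda)\otimes(S_wb_\mu)$ (this already fails in the rank-one situation when $\mu\notin P^+$), so the extremal family of $b_\lambda\otimes b_\mu$ is genuinely not the tensor product of the individual extremal families of $b_\lambda$ and $b_\mu$, and one cannot merely quote the two one-factor statements. Instead, writing $w=s_jw'$ with $\ell(w)=\ell(w')+1$, I would track the honest action of $S_{s_j}$ on $S_{w'}(b_\lambda\otimes b_\mu)$ through the signature (cancellation) rule and show that $i$-extremality is preserved for every $i$. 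The leverage provided by $\lambda\in P^+$ is that the relevant $B(\lambda)$-factor remains extremal at every stage, hence always has a one-sided $i$-signature ($\varepsilon_i=0$ or $\varphi_i=0$); this rigidity prevents the cancellation rule from mixing the two factors unpredictably and is exactly what lets one verify that $S_{s_j}$ carries an $i$-extremal tensor to another $i$-extremal tensor. With extremality secured, the reduction of the first paragraph finishes the proof, and the entire delicacy is concentrated in this bookkeeping of the Weyl group action across the two tensor factors.
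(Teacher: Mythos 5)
Your overall architecture is the right one, and in fact it is the only route available: the paper does not prove this lemma at all (it is quoted verbatim from \cite[Lemma 8.3.1]{Kas94}), and Kashiwara's argument does proceed exactly as in your first paragraph --- show $v_\lambda \otimes v_\mu$ is extremal of weight $\lambda+\mu$, then use the universal property of $V(\lambda+\mu)$, whose defining relations are the extremality conditions, to get a morphism sending $v_{\lambda+\mu}$ to $v_\lambda \otimes v_\mu$. Your base case $w=e$ is also a correct computation. The genuine gap is the mechanism you propose for the inductive step. You claim that the $B(\lambda)$-factor of $S_w(b_\lambda \otimes b_\mu)$ ``remains extremal at every stage, hence always has a one-sided $i$-signature,'' and you identify this rigidity as exactly what drives the induction. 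That claim is false after even a single reflection, in the very rank-one situation you yourself flag: fix $i$, take $m = \langle \lambda, \alpha_i^\vee \rangle$ and $\langle \mu, \alpha_i^\vee \rangle = -n$ with $0 < n < m$, with $b_\mu$ at the bottom of its $i$-string ($\varphi_i(b_\mu) = 0$, $\varepsilon_i(b_\mu) = n$). Since $\langle \lambda+\mu, \alpha_i^\vee \rangle = m-n > 0$, we have $S_{s_i}(b_\lambda \otimes b_\mu) = \widetilde{f}_i^{\,m-n}(b_\lambda \otimes b_\mu)$, and the tensor rule sends all $m-n$ applications into the left factor (its $\varphi_i$ stays strictly above $\varepsilon_i(b_\mu) = n$ throughout), giving $(\widetilde{f}_i^{\,m-n} b_\lambda) \otimes b_\mu$. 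The left factor now has $\varepsilon_i = m-n > 0$ \emph{and} $\varphi_i = n > 0$: it sits strictly inside its $i$-string, is not extremal in $B(\lambda)$, and its signature is two-sided. (The tensor itself is still extremal here, since $\varphi_i\bigl(S_{s_i}(b_\lambda \otimes b_\mu)\bigr) = \max\{0,\, n + \langle \mu, \alpha_i^\vee \rangle\} = 0$, but your stated reason for that fact collapses; note also that neither factor equals the corresponding $S_{s_i}$-translate of $b_\lambda$ or $b_\mu$, so no factorwise bookkeeping of the kind you describe can be propagated by the induction. A correct proof must carry a weaker invariant of the pair, which is what Kashiwara's module-level argument does.)

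There are also two smaller but real errors in your reduction. The assertion that $V(\lambda+\mu) = U_q(\mathfrak{g})v_{\lambda+\mu}$ ``forces $B(\lambda+\mu)$ to be connected'' is a non sequitur: the Kashiwara operators do not generate the module action, and crystals of cyclic modules need not be connected --- indeed for level-zero weights over affine algebras of finite rank $B(\nu)$ is typically disconnected (cf.\ \cite{Kas02,BN04}), and in the present infinite-rank setting connectedness of $B(\lambda)$ is the separately cited, nontrivial Proposition \ref{prop:conn} of Naito--Sagaki, whereas the lemma under review is Kashiwara's general statement. For the same reason your quoted ``structural property'' is misstated: the connected component of an extremal element of weight $\nu$ is isomorphic to $C(b_\nu)$, not to $B(\nu)$. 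Fortunately the lemma only asserts an isomorphism of connected components, so both missteps are excisable: once extremality of $v_\lambda \otimes v_\mu$ is actually established, the universal-property morphism $V(\lambda+\mu) \to V(\lambda) \otimes V(\mu)$, $v_{\lambda+\mu} \mapsto v_\lambda \otimes v_\mu$, induces $C(b_{\lambda+\mu}) \cong C(b_\lambda \otimes b_\mu)$ with no connectivity input. As written, however, the heart of your proof --- the induction on $\ell(w)$ --- rests on a rigidity statement that fails, so the proposal has a genuine hole.
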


From now on, we assume that all Lie algebras in this article are affine Lie algebras of infinite rank without otherwise stated. 
In particular, we use the notation $\mathfrak{g}_\infty$ to emphasize the infinite rank.

\begin{prop}[{\cite[Proposition 3.6]{NS12}}] \label{prop:conn}
    For $\lambda \in P$, $B(\lambda)$ is connected.
\end{prop}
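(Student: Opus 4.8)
The plan is to prove the stronger statement that $B(\lambda) = C(b_\lambda)$, i.e. the whole crystal coincides with the connected component of the distinguished vertex $b_\lambda$; connectivity is then immediate. The starting point is Kashiwara's construction \cite{Kas94}: by construction the extremal weight module is cyclic, $V(\lambda) = U_q(\mathfrak{g}) v_\lambda$, and its crystal basis $(L(\lambda), B(\lambda))$ comes equipped with a lower global basis $\{\, G(b) \mid b \in B(\lambda) \,\}$ in which $v_\lambda$ coincides, up to a scalar, with $G(b_\lambda)$. I would exploit the interplay between the cyclic generation of $V(\lambda)$ as a module and the combinatorial structure of $B(\lambda)$ as a crystal.

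First I would record the compatibility of the Chevalley generators with the crystal structure at the level of the global basis: for each $i \in I$ and each $b \in B(\lambda)$, both $e_i G(b)$ and $f_i G(b)$ lie in the $\mathbb{Q}(q)$-span of those $G(b')$ with $b'$ in the $i$-string through $b$ (the upper-triangularity of the lower global basis along $i$-strings). Since every such $b'$ is one of $\widetilde{e}_i^{\,n} b$ or $\widetilde{f}_i^{\,n} b$, it lies in $C(b)$. Consequently, for any connected component $C$ of $B(\lambda)$, the subspace $V_C := \sum_{b \in C} \mathbb{Q}(q)\, G(b)$ is stable under all $e_i$, $f_i$ and under $q^h$, hence is a $U_q(\mathfrak{g})$-submodule of $V(\lambda)$.

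Now take $C = C(b_\lambda)$. Then $V_{C(b_\lambda)}$ is a $U_q(\mathfrak{g})$-submodule containing $v_\lambda = G(b_\lambda)$, so by cyclicity $V_{C(b_\lambda)} \supseteq U_q(\mathfrak{g}) v_\lambda = V(\lambda)$, while the reverse inclusion is trivial. Comparing the global bases of the two sides forces $\{\, G(b) \mid b \in C(b_\lambda)\,\} = \{\, G(b) \mid b \in B(\lambda)\,\}$, that is $C(b_\lambda) = B(\lambda)$, which proves connectivity. (Proposition \ref{prop:W-orbit isomorphism} lets one replace $\lambda$ by any $w\lambda$ beforehand, but this is not needed here.)

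The main obstacle is the submodule step: one must justify that the span of a single connected component is preserved by $e_i$ and $f_i$. For the highest weight case $\lambda \in P^+$ this reduces to the classical fact that $B(\lambda)$ is generated from $b_\lambda$ by the $\widetilde{f}_i$, and one might hope to bootstrap from there using Lemma \ref{lem:summand in tensor} together with a Weyl-orbit reduction via Proposition \ref{prop:W-orbit isomorphism}. However, this reduction breaks down precisely for level-zero $\lambda \in E$, whose Weyl orbit contains no dominant representative, so the global-basis argument, equivalently the compatibility of the divided powers $e_i^{(n)}, f_i^{(n)}$ with $\widetilde{e}_i^{\,n}, \widetilde{f}_i^{\,n}$ on the crystal lattice, seems unavoidable for the general statement.
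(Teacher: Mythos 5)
Your argument has a genuine gap at its central step, and the gap is not patchable. The claimed ``upper-triangularity of the lower global basis along $i$-strings'' is false: what Kashiwara's theory actually gives (see \cite{Kas94} and the global-basis results it relies on) is
$f_i\,G(b) \;=\; [\,1+\varepsilon_i(b)\,]_i\, G(\widetilde{f}_i b) \;+\; \sum_{b'} F^i_{b,b'}\, G(b')$,
where the sum runs over \emph{all} $b' \in B(\lambda)$ of weight ${\rm wt}(b)-\alpha_i$ with $\varepsilon_i(b') > 1+\varepsilon_i(b)$ (similarly for $e_i$). These correction terms are constrained only by their weight and by having strictly larger $\varepsilon_i$; they are in no way confined to the $i$-string through $b$, and they can perfectly well lie in other connected components of $B(\lambda)$. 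Consequently your subspace $V_C = \sum_{b \in C} \mathbb{Q}(q)\,G(b)$ is not stable under $e_i, f_i$ in general, the ``submodule'' step fails, and the cyclicity argument collapses. Your closing paragraph concedes this is the delicate point but then simply asserts the needed compatibility; it is not merely unproved, it is false as stated.

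There is a decisive sanity check showing the approach cannot be repaired: your proof nowhere uses that $\mathfrak{g}$ has infinite rank, so if it were valid it would apply verbatim to quantum affine algebras of finite rank, where $V(\lambda)$ is equally cyclic on its extremal vector and carries a lower global basis --- yet there $B(\lambda)$ is \emph{disconnected} whenever $\lambda$ is level-zero dominant with at least two fundamental constituents (by the tensor decomposition $B(\lambda) \cong \bigotimes_i B(\varpi_i)^{\otimes m_i}$ of \cite{BN04}, cf. \cite{Kas02}; this is precisely why the level-zero path-model literature works with the proper subcrystal $B_0(\lambda) \subsetneq B(\lambda)$ containing $b_\lambda$). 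So your method would prove a false statement. Connectedness of $B(\lambda)$ for every $\lambda \in P$ is genuinely an infinite-rank phenomenon --- roughly, it hinges on the absence of the null root $\delta$, so that level-zero weights behave like $\mathfrak{gl}$-type weights --- and note that the paper itself does not reprove the proposition: it quotes it from \cite[Proposition 3.6]{NS12}, whose argument exploits exactly this infinite-rank structure. Any correct proof must do the same.
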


For $n \geq 2$ and $\lambda \in P$, let $B_n(\lambda)$ be the subcrystal of $B(\lambda)$ given by
\[ B_n(\lambda) = \{ \widetilde{f}_{i_r} \dots \widetilde{f}_{i_1} b_\lambda \,|\, r \in \mathbb{Z}_+, i_1, \dots, i_r \in \{ 0, \dots, n-1 \} \} \ \backslash\ \{ {\bf 0} \}, \]
i.e., $B_n(\lambda)$ is an extremal weight $\mathfrak{g}_n$-crystal generated by $b_\lambda$. 
In this case, the weight lattice associated with the subalgebra $\mathfrak{g}_n$ is obtained by 
the projection $\displaystyle P \ \twoheadrightarrow \ \bigoplus_{i=1}^n \mathbb{Z}\epsilon_i$ 
sending $\epsilon_i$ ($i \geq 1$) to $\epsilon_i$ if $i \leq n$ and to $0$ if $i > n$, and 
$\Lambda^\mathfrak{g}_0$ to $-\frac{\epsilon_\mathfrak{g}}{2} (\epsilon_1 + \dots + \epsilon_n)$.

Throughout this paper, we now assume that any $\lambda \in P$ has a nonnegative level. 
In fact, we can derive corresponding statements for $\lambda \in P$ with ${\rm lv}(\lambda) < 0$ from our results 
by considering dual crystals $B^\vee$ (cf. \cite[Section 7.4]{Kas95}).

\subsection{Tensor products of extremal weight crystals} \label{subsec:tensor prod of ext wt crystal}

For $\lambda = \sum \lambda_i \epsilon_i \in P$, set ${\rm supp}(\lambda) = \{\, i \in I \,|\, \lambda_i \neq 0 \,\}$.
Note that ${\rm supp}(\lambda)$ is a finite set if and only if ${\rm lv}(\lambda) = 0$. 
For $\lambda \in E$ with $r = |{\rm supp}(\lambda)|$, the $W$-orbit of $\lambda$ contains exactly one element 
of the form $\varpi_\mu$ for $\mu \in \mathcal{P}_r$. In this case, we denote $\mu = \lambda^\dagger$ and 
we know $B(\lambda) \cong B(\varpi_{\lambda^\dagger})$ by Proposition \ref{prop:W-orbit isomorphism}.

\begin{rem} \label{rem:Weyl group orbit D}
    The partition $\lambda^\dagger$ may not be well-defined in the weight lattice of $\mathfrak{g}_n$ for fixed $n \geq 2$.
    Indeed, for the Weyl group $W_n$ (the subgroup of $W$ generated by $s_0, s_1, \dots, s_{n-1}$) associated with $\mathfrak{g}_n$, 
    the $W_n$-orbit of $\lambda \in E$ may not contain any element of the form $\varpi_\mu$ for $\mu \in \mathcal{P}$. 
    However, the $W_N$-orbit (and so $W$-orbit) of $\lambda$  must contain such a (unique) element for sufficiently large $N (\geq |{\rm supp}(\lambda)|+2)$. 
    Thus, the weight $\varpi_{\lambda^\dagger}$ is well-defined in $P$.
\end{rem}

We explain the tensor product of two extremal weight crystals. For details, see \cite[Section 4]{NS12}.
\begin{prop} \label{prop:tensor product}
    The following statements hold.
    \begin{enumerate}
        \item When $\lambda \in E$ and $\mu \in E$, we have
            \begin{equation} \label{eqn:zero tensor zero}
                B(\lambda) \otimes B(\mu) \ \cong \ \bigoplus_{\nu \in E} B(\nu)^{\oplus {\tt LR}^{\nu^\dagger}_{\lambda^\dagger\,\mu^\dagger}},
            \end{equation}
            where ${\tt LR}^{\nu^\dagger}_{\lambda^\dagger\,\mu^\dagger}$ is the Littlewood-Richardson coefficient for partitions $\lambda^\dagger, \mu^\dagger$, and $\nu^\dagger$.

        \item When $\lambda \in P^+$ and $\mu \in P^+$, we have
            \begin{equation} \label{eqn:posi tensor posi}
                B(\lambda) \otimes B(\mu) \cong \bigoplus_{\substack{b \in B(\mu) \\ \lambda+{\rm wt}(b) \in P^+}} B(\lambda + {\rm wt}(b)).
            \end{equation}

        \item When $\lambda \in E$ and $\mu \in P^+$, suppose $\mu = \sum \mu_i \epsilon_i$ and 
            set $p$ to be the minimal integer such that $\mu_{p+1} = -\frac{\epsilon_\mathfrak{g}}{2} {\rm lv}(\mu)$. 
            In addition, when $\varpi_{\lambda^\dagger} = \sum_{i=1}^r m_i\epsilon_i$ with $r = |{\rm supp}(\lambda)|$, 
            let $\lambda_0 = \sum_{i=1}^r m_i \epsilon_{p+i} \in W\lambda \cap E$. Then we have
            \begin{equation} \label{eqn:zero tensor posi}
                B(\lambda) \otimes B(\mu) \,\cong\, B(\mu + \lambda_0).
            \end{equation}

        \item When $\lambda \in P^+$ and $\mu \in E$, we have
            \begin{equation} \label{eqn:posi tensor zero}
                B(\lambda) \otimes B(\mu) \,\cong\, \bigoplus_{\nu \in P^+_{[n]}(\lambda, \mu)} B(\nu)^{\oplus |B^{\max}_{n, \nu}|},
            \end{equation}
            where, for sufficiently large $n$, $P^+_{[n]}(\lambda, \mu)$ is the set of $\nu = \sum \nu_i\epsilon_i \in \lambda+\mu+Q$ such that 
                \[ 0 \geq \underbrace{\nu_1 \geq \dots \geq \nu_p}_{\nu_i > -\frac{\epsilon_\mathfrak{g}}{2} {\rm lv}(\lambda)} > 
                \underbrace{\nu_{p+1} = \dots = \nu_{p+q}}_{\nu_i = -\frac{\epsilon_\mathfrak{g}}{2} {\rm lv}(\lambda)} >
                \underbrace{\nu_{p+q+1} \geq \dots \geq \nu_n}_{\nu_i < -\frac{\epsilon_\mathfrak{g}}{2} {\rm lv}(\lambda)} < 
                \underbrace{\nu_{n+1} = \cdots}_{\nu_i = -\frac{\epsilon_\mathfrak{g}}{2} {\rm lv}(\lambda)} \]
            for some integers $p,q$ with $q \geq |\mu|$ (we replace $\nu_1$ with $|\nu_1|$ when $\mathfrak{g}_\infty = \mathfrak{d}_\infty$) and 
            \[ B^{\max}_{n, \nu} = \{\, b \in B_n(\lambda) \otimes B_n(\mu) \,|\, 
            {\rm wt}(b)= \nu,\ \widetilde{e}_ib = 0 \ (i = 0, \dots, n-1) \,\}. \]
    \end{enumerate}
\end{prop}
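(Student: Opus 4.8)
The plan is to prove all four decompositions uniformly by descending to finite rank and then stabilizing: for $n \gg 0$ I would decompose the $\mathfrak{g}_n$-crystal $B_n(\lambda)\otimes B_n(\mu)$ using the well-understood tensor product (Littlewood–Richardson) rules for the classical Lie algebras $\mathfrak{so}_{2n+1}, \mathfrak{sp}_{2n}, \mathfrak{so}_{2n}$, and then show that the resulting decomposition stabilizes as $n\to\infty$, so that each $\mathfrak{g}_n$-component of highest weight $\nu$ contributes exactly one copy of the extremal weight crystal $B(\nu)$ in the limit. The bookkeeping device is the set $B^{\max}_{n,\nu}$ of $\mathfrak{g}_n$-highest weight vectors, whose cardinality records multiplicities; the crux is to compute these cardinalities and to prove that they become independent of $n$ once $n$ exceeds the supports involved. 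Throughout I would use freely Proposition \ref{prop:W-orbit isomorphism} (to replace a weight by a $W$-translate), Lemma \ref{lem:summand in tensor} (to recognize the component of $b_\lambda\otimes b_\mu$), Proposition \ref{prop:conn} (connectedness of each $B(\nu)$), and the commutativity $B(\lambda)\otimes B(\mu)\cong B(\mu)\otimes B(\lambda)$ of the crystal tensor product.

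Cases \eqref{eqn:zero tensor zero} and \eqref{eqn:zero tensor posi} are the most direct. For \eqref{eqn:zero tensor zero}, with $\lambda,\mu\in E$, I would reduce via $B(\lambda)\cong B(\varpi_{\lambda^\dagger})$ to partitions and note that $B_n(\varpi_{\lambda^\dagger})$ is a finite-dimensional irreducible $\mathfrak{g}_n$-crystal of ``shape $\lambda^\dagger$''; the tensor product then decomposes by the symplectic/orthogonal Littlewood–Richardson rule, which in the stable range $n\geq \ell(\lambda^\dagger)+\ell(\mu^\dagger)$ collapses by Littlewood's stability theorem to the ordinary $\mathfrak{gl}_n$ rule, yielding the coefficients ${\tt LR}^{\nu^\dagger}_{\lambda^\dagger\mu^\dagger}$. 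For \eqref{eqn:zero tensor posi}, with $\lambda\in E$ and $\mu\in P^+$, I would first use Proposition \ref{prop:W-orbit isomorphism} to move the finite support of $\lambda$ into the flat tail of $\mu$, replacing $\lambda$ by $\lambda_0\in W\lambda\cap E$ supported on positions $>p$; then, by commutativity and Lemma \ref{lem:summand in tensor} applied with the dominant factor $\mu$, the component of $b_\mu\otimes b_{\lambda_0}$ in $B(\mu)\otimes B(\lambda_0)$ is isomorphic to $B(\mu+\lambda_0)$. The remaining point is that $B(\mu)\otimes B(\lambda_0)$ is already connected; I would obtain this from the finite-rank picture by verifying that the $\alpha_0$-arrows fuse the several $\mathfrak{g}_n$-components into a single $\mathfrak{g}_\infty$-component, so the whole tensor product equals $B(\mu+\lambda_0)$.

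For \eqref{eqn:posi tensor posi}, with $\lambda,\mu\in P^+$, both $B(\lambda)$ and $B(\mu)$ are genuine highest weight crystals, so $B(\lambda)\otimes B(\mu)$ is a disjoint union of highest weight crystals whose generators are the vectors $b_\lambda\otimes b$ annihilated by every $\widetilde{e}_i$, each contributing $B(\lambda+{\rm wt}(b))$. The content is then to identify this family of generators with the set $\{\,b\in B(\mu)\mid \lambda+{\rm wt}(b)\in P^+\,\}$. Since $\lambda\in P^+$, the highest weight condition reads $\varepsilon_i(b)\leq\langle\lambda,\alpha_i^\vee\rangle$ for all $i$, which in finite rank is strictly stronger than dominance of $\lambda+{\rm wt}(b)$; I would show that in the infinite-rank limit the discrepancy disappears, i.e. once $n$ exceeds the support of $\mu$ the extra weight multiplicities in the relevant shifted-dominant region vanish, so dominance of $\lambda+{\rm wt}(b)$ forces $b_\lambda\otimes b$ to be highest weight and forces $b$ to be the unique element of its weight. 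This multiplicity-free stabilization is the real work in \eqref{eqn:posi tensor posi}.

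The main obstacle is \eqref{eqn:posi tensor zero}, where $\lambda\in P^+$, $\mu\in E$, and genuine multiplicities $|B^{\max}_{n,\nu}|$ survive. Here I would compute the $\mathfrak{g}_n$-decomposition of $B_n(\lambda)\otimes B_n(\mu)$ directly and track the weights $\nu$ of its $\mathfrak{g}_n$-highest weight vectors. The difficulty is threefold: (i) to prove that the weights arising are exactly those of the prescribed valley shape, with the interior flat stretch at the level value $-\frac{\epsilon_\mathfrak{g}}{2}{\rm lv}(\lambda)$ of length $q\geq|\mu|$ reflecting that $\mu$ perturbs only finitely many coordinates while $\lambda$ fixes the level; (ii) to show that $|B^{\max}_{n,\nu}|$ is eventually independent of $n$, so that each surviving $\mathfrak{g}_n$-component lifts to the \emph{extremal} weight crystal $B(\nu)$ rather than merely to a highest weight $\mathfrak{g}_n$-crystal; and (iii) to handle the type-$\mathfrak{d}_\infty$ sign subtlety (the replacement of $\nu_1$ by $|\nu_1|$), which originates from the two spin-like nodes $\alpha_0,\alpha_1$ and the fact that the relevant Weyl group acts by \emph{even} sign changes. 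Establishing the stabilization in (ii) — equivalently, that the finite-type branching multiplicities for $\mathfrak{b}_n,\mathfrak{c}_n,\mathfrak{d}_n$ reach their stable Littlewood values once $n$ is large — is what makes \eqref{eqn:posi tensor zero} the crux, and it is also the uniformity that underlies passing all four statements to the infinite-rank limit.
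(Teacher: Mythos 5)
The paper does not actually prove this proposition: it is quoted from \cite[Section 4]{NS12}, so your attempt must be measured against that source's method, which your stabilization framework superficially resembles (the sets $B^{\max}_{n,\nu}$ and the bijections $\theta_n$, $S_w$ in the remark are precisely their bookkeeping). But your central uniform claim --- that ``each $\mathfrak{g}_n$-component of highest weight $\nu$ contributes exactly one copy of $B(\nu)$ in the limit'' --- is false, and it breaks parts (1) and (4). Concretely, in the stable range $n\geq \ell(\lambda^\dagger)+\ell(\mu^\dagger)$ the $\mathfrak{g}_n$-decomposition of $B_n(\varpi_{\lambda^\dagger})\otimes B_n(\varpi_{\mu^\dagger})$ is governed by the Newell--Littlewood multiplicities $\sum_{\alpha,\beta,\gamma} c^{\lambda^\dagger}_{\alpha\beta}\,c^{\mu^\dagger}_{\alpha\gamma}\,c^{\nu}_{\beta\gamma}$, not by ${\tt LR}^{\nu}_{\lambda^\dagger\mu^\dagger}$: Littlewood's stability theorem does \emph{not} collapse the type-$BCD$ rule to the $\mathfrak{gl}_n$ rule; the contraction components with $|\nu|<|\lambda^\dagger|+|\mu^\dagger|$ persist for every finite $n$. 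For instance, for $\mathfrak{c}_n$ the crystal $B_n(\varpi_1)^{\otimes 2}$ has three components (shapes $(2)$, $(1,1)$, $\emptyset$) for all $n$, whereas \eqref{eqn:zero tensor zero} has only two: the singleton trivial component (spanned by $\overline{n}\otimes n$, a vertex that moves with $n$) is absorbed into a larger component in the infinite-rank limit, because $B(\varpi_\lambda)$ is no longer a highest weight crystal and a $\mathfrak{g}_n$-maximal vector need not be $\mathfrak{g}_\infty$-maximal nor generate a separate $\mathfrak{g}_\infty$-component. The same merging phenomenon is exactly why \eqref{eqn:posi tensor zero} retains only those $\nu$ whose flat stretch satisfies $q\geq|\mu|$. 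A correct proof must identify \emph{which} finite-rank components survive and which merge --- this is what the valley-shape condition and the maps $\theta_n$, $S_w$ of \cite{NS12} accomplish --- and your sketch supplies no mechanism for it; ``decompose at finite rank and stabilize'' is precisely the step that fails here.

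There are secondary errors as well. In (2), your claim that dominance of $\lambda+{\rm wt}(b)$ forces $b$ to be the unique element of its weight is false: weight multiplicities in $B(\mu)$ for positive-level $\mu$ are generally greater than one, and \eqref{eqn:posi tensor posi} is \emph{not} multiplicity-free --- the multiplicity of $B(\nu)$ equals the number of $b\in B(\mu)$ of weight $\nu-\lambda$; what actually requires proof is only that dominance of $\lambda+{\rm wt}(b)$ implies the string inequalities $\varepsilon_i(b)\leq\langle \lambda,\alpha_i^\vee\rangle$ for all $i$, a statement special to infinite rank. Finally, you use ``freely'' the commutativity $B(\lambda)\otimes B(\mu)\cong B(\mu)\otimes B(\lambda)$ in (3); for extremal weight crystals this is not automatic (the flip map is not a crystal morphism, and no combinatorial $R$-matrix is available a priori) --- in \cite{NS12} such isomorphisms are \emph{consequences} of the decomposition theorems, so invoking commutativity to prove \eqref{eqn:zero tensor posi} is circular unless you justify it independently, e.g.\ by directly proving connectedness of $B(\lambda)\otimes B(\mu)$ and locating an extremal vector of weight $\mu+\lambda_0$ in that order of tensor factors.
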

\begin{rem} \hfill

    (1) $B(\nu)$ appears in the decomposition of $B(\lambda) \otimes B(\mu)$ only if ${\rm lv}(\nu) = {\rm lv}(\lambda) + {\rm lv}(\mu)$. 

    (2) The authors of \cite{NS12} construct a bijection $\theta_n : P^+_{[n]}(\lambda, \mu) \to P^+_{[n+1]}(\lambda, \mu)$ for sufficiently large $n$ 
    such that $\nu \in P^+_{[n]}(\lambda, \mu)$ and $\theta_n(\nu) \in P^+_{[n+1]}(\lambda, \mu)$ are in the same $W$-orbit. 
    In addition, if $\theta_n(\nu) = w\nu$ for some $w \in W$, then there is a bijection $S_w : B^{\max}_{n+1, \nu} \to B^{\max}_{n, \nu}$. 
    Therefore, the decomposition \eqref{eqn:posi tensor zero} is independent of the choice of $n$.
\end{rem}

Now, we can explain that an extremal weight crystal is decomposed into the tensor product of two extremal weight crystals. 
Precisely speaking, the $W$-orbit of $\lambda \in P$ contains unique element $\nu = \sum \nu_i\epsilon_i$ such that
\begin{equation} \label{eqn:wt decomp}
    0 \geq \underbrace{\nu_1 \geq \dots \geq \nu_p}_{\nu_i > -\frac{\epsilon_\mathfrak{g}}{2} {\rm lv}(\lambda)} > 
    \underbrace{\nu_{p+1} \geq \dots \geq \nu_{p+q}}_{\nu_i < -\frac{\epsilon_\mathfrak{g}}{2} {\rm lv}(\lambda)} < 
    \underbrace{\nu_{p+q+1} = \cdots}_{\nu_i = -\frac{\epsilon_\mathfrak{g}}{2} {\rm lv}(\lambda)}
\end{equation}
for some $p, q \geq 0$ (with replacing $\nu_1$ with $|\nu_1|$ when $\mathfrak{g}_\infty = \mathfrak{d}_\infty$). 
For such $\nu$, set
\begin{equation} \label{eqn:wt decomp into zero and dominant}
    \nu^0 = \sum_{i=1}^q (\nu_{p+i}+\frac{\epsilon_\mathfrak{g}}{2} {\rm lv}(\lambda)) \epsilon_{p+i}, \quad 
    \nu^+ = \nu - \nu^0 \left( = \sum_{i=1}^p \nu_i\epsilon_i - \sum_{i=p+1}^\infty \frac{\epsilon_\mathfrak{g}}{2} {\rm lv}(\lambda)\epsilon_i \right).
\end{equation}
Then it is clear that $\nu^0 \in E$ and $\nu^+ \in P^+$.
\begin{prop} \label{prop:decomp into zero and posi}
    For $\lambda \in P$, there exist weights $\lambda^0 \in E$ and $\lambda^+ \in P^+$ such that
    \[ B(\lambda) \, \cong \, B(\lambda^0) \otimes B(\lambda^+). \]
    In addition, for $\lambda^0, \mu^0 \in E$ and $\lambda^+, \mu^+ \in P^+$, we have
    \[ B(\lambda^0) \otimes B(\lambda^+) \,\cong B(\mu^0)\, \otimes B(\mu^+) \iff \lambda^+ = \mu^+,\, \lambda^0 \in W\mu^0. \]
\end{prop}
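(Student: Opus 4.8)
The plan is to produce the decomposition by first passing to the distinguished representative of the $W$-orbit of $\lambda$, and then to deduce the classification from the fact, supplied by Proposition \ref{prop:tensor product}(3), that a tensor product $B(\lambda^{0})\otimes B(\lambda^{+})$ with $\lambda^{0}\in E$ and $\lambda^{+}\in P^{+}$ is always a \emph{single} extremal weight crystal.

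For existence, I would start from Proposition \ref{prop:W-orbit isomorphism}: since $B(\lambda)\cong B(\nu)$ for any $\nu\in W\lambda$, I may replace $\lambda$ by the unique element $\nu$ of $W\lambda$ in the normal form \eqref{eqn:wt decomp}. Writing $\nu=\nu^{+}+\nu^{0}$ as in \eqref{eqn:wt decomp into zero and dominant}, with $\nu^{0}\in E$ and $\nu^{+}\in P^{+}$, I set $\lambda^{0}=\nu^{0}$ and $\lambda^{+}=\nu^{+}$. Proposition \ref{prop:tensor product}(3), applied with the level-zero weight $\nu^{0}$ and the dominant weight $\nu^{+}$, shows that $B(\nu^{0})\otimes B(\nu^{+})$ is a single extremal weight crystal $B(\eta)$. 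To identify $B(\eta)$ with $B(\nu)$ I would cross-check with Lemma \ref{lem:summand in tensor}: taking the dominant weight $\nu^{+}$ and $\nu^{0}\in P$, the component of $b_{\nu^{+}+\nu^{0}}=b_{\nu}$ in $B(\nu)$ is isomorphic to the component of $b_{\nu^{+}}\otimes b_{\nu^{0}}$ in $B(\nu^{+})\otimes B(\nu^{0})$. Since $B(\nu)$ is connected by Proposition \ref{prop:conn}, and $B(\nu^{+})\otimes B(\nu^{0})\cong B(\nu^{0})\otimes B(\nu^{+})=B(\eta)$ is connected (being a single $B(\eta)$), both components fill their respective crystals, and $B(\lambda)\cong B(\nu)\cong B(\eta)=B(\nu^{0})\otimes B(\nu^{+})$.

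For the classification, the implication $(\Leftarrow)$ is immediate: if $\lambda^{0}\in W\mu^{0}$ then $B(\lambda^{0})\cong B(\mu^{0})$ by Proposition \ref{prop:W-orbit isomorphism}, and together with $\lambda^{+}=\mu^{+}$ this gives the isomorphism of tensor products. For $(\Rightarrow)$, I would again use Proposition \ref{prop:tensor product}(3) to write $B(\lambda^{0})\otimes B(\lambda^{+})\cong B(\eta_{\lambda})$ and $B(\mu^{0})\otimes B(\mu^{+})\cong B(\eta_{\mu})$ for single weights $\eta_{\lambda},\eta_{\mu}$. The assumed isomorphism and the second assertion of Proposition \ref{prop:W-orbit isomorphism} force $\eta_{\lambda}\in W\eta_{\mu}$, i.e.\ $\eta_{\lambda}$ and $\eta_{\mu}$ share a normal-form representative $\nu$. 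The crux is then to recover $(\lambda^{+},\,W\lambda^{0})$ from $\nu$: I would verify that $\eta_{\lambda}$ is already in the normal form \eqref{eqn:wt decomp}, its initial block (entries in $(-\tfrac{\epsilon_{\mathfrak g}}{2}{\rm lv}(\lambda^{+}),0]$) coming from $\lambda^{+}$ and its lower block (entries below $-\tfrac{\epsilon_{\mathfrak g}}{2}{\rm lv}(\lambda^{+})$) coming from the shifted copy of $\lambda^{0}$. Reading off \eqref{eqn:wt decomp into zero and dominant} then returns $\nu^{+}=\lambda^{+}$ on the nose and $\nu^{0}\in W\lambda^{0}$ (the shifted copy of $\lambda^{0}$ sharing the partition $(\lambda^{0})^{\dagger}$). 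Applying the same reconstruction to $\eta_{\mu}$ yields $\mu^{+}=\nu^{+}$ and $\mu^{0}\in W\nu^{0}$, whence $\lambda^{+}=\mu^{+}$ and $\lambda^{0}\in W\mu^{0}$.

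The main obstacle is precisely this reconstruction step, namely the injectivity of $(\lambda^{0},\lambda^{+})\mapsto W\eta_{\lambda}$ on the pair $(\lambda^{+},W\lambda^{0})$. This reduces to checking that $\eta_{\lambda}$ genuinely lands in the normal form \eqref{eqn:wt decomp}, which requires matching the cut index $p$ (minimal with $\lambda^{+}_{p+1}=-\tfrac{\epsilon_{\mathfrak g}}{2}{\rm lv}(\lambda^{+})$) used to place the shifted copy of $\lambda^{0}$ with the block boundaries of the normal form, followed by the uniqueness of that representative inside the $W$-orbit. Particular care is needed when $\mathfrak{g}_\infty=\mathfrak{d}_\infty$, where $W$ permits only an even number of sign changes and the first coordinate is recorded as $|\nu_{1}|$ in \eqref{eqn:wt decomp}, so the sign of the leading entry must be tracked separately when comparing $W$-orbits. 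A minor point to confirm in the existence step is the commutativity up to isomorphism $B_{1}\otimes B_{2}\cong B_{2}\otimes B_{1}$ used to swap factor orders, which holds for these crystals via the combinatorial $R$-matrix.
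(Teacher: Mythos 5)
Your skeleton --- pass to the normal-form representative $\nu\in W\lambda$ of \eqref{eqn:wt decomp}, split $\nu=\nu^0+\nu^+$ as in \eqref{eqn:wt decomp into zero and dominant}, apply Proposition \ref{prop:tensor product}(3), and settle the classification with the second assertion of Proposition \ref{prop:W-orbit isomorphism} --- is exactly the paper's. But your identification of the single extremal weight $\eta$ contains a genuine error: the commutativity $B_1\otimes B_2\cong B_2\otimes B_1$ that you invoke via the combinatorial $R$-matrix is false in this setting. Compare \eqref{eqn:zero tensor posi} with \eqref{eqn:posi tensor zero}: $B(\lambda^0)\otimes B(\lambda^+)$ is a single connected extremal weight crystal, whereas $B(\lambda^+)\otimes B(\lambda^0)$ decomposes with multiplicities (see Proposition \ref{prop:tensor decomp}; e.g.\ $B(\Pi^\mathfrak{d}_1)\otimes B(\varpi_2)$ has two components isomorphic to $B(\Pi^\mathfrak{d}_1)$). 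This non-commutativity is structural, not a technicality: it is precisely why the Grothendieck ring carries the twisted relation $a\,{\tt z}_n={\tt z}_n a+\delta_n(a)$ in Section \ref{subsec:Grothendieck realization}. Consequently your use of Lemma \ref{lem:summand in tensor} only embeds $B(\nu)$ as \emph{one} connected component of the generally disconnected crystal $B(\nu^+)\otimes B(\nu^0)$, and your claim that this component fills the whole tensor product --- which rests on transferring connectedness from $B(\nu^0)\otimes B(\nu^+)$ across the false swap --- is unavailable.

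The detour is also unnecessary, which is how the paper's one-line proof avoids the problem: \eqref{eqn:zero tensor posi} does not merely produce an unnamed $B(\eta)$, it identifies $\eta=\nu^+ + \lambda_0$, where $\lambda_0\in W\nu^0\cap E$ is the copy of $\nu^0$ shifted just below the cut index $p$ of $\nu^+$; since $\nu$ is in the normal form \eqref{eqn:wt decomp}, this placement gives $\nu^+ + \lambda_0\in W\nu$, hence $B(\nu^0)\otimes B(\nu^+)\cong B(\nu)\cong B(\lambda)$ directly, with no swap and no appeal to Lemma \ref{lem:summand in tensor} or Proposition \ref{prop:conn}. Your treatment of the classification statement is consistent with the paper, which simply cites \cite[Proposition 4.6]{NS12} (or leaves the check via Proposition \ref{prop:W-orbit isomorphism} as an exercise): your reconstruction of $(\lambda^+, W\lambda^0)$ from the block structure of the normal form, including the extra bookkeeping of signs when $\mathfrak{g}_\infty=\mathfrak{d}_\infty$, is the intended argument. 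Repair the existence step as above and the proof is sound.
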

\begin{proof}
    We use the same notation appearing in \eqref{eqn:wt decomp} and \eqref{eqn:wt decomp into zero and dominant}. 
    In this case, we know $B(\lambda) \cong B(\nu)$ by Proposition \ref{prop:W-orbit isomorphism}. 
    Moreover, the isomorphism \eqref{eqn:zero tensor posi} gives $B(\nu^0) \otimes B(\nu^+) \cong B(\nu)$.
    The last statement follows from \cite[Proposition 4.6]{NS12} (or one can easily check it using Proposition \ref{prop:W-orbit isomorphism}; 
    we leave it as an exercise).
\end{proof}

\begin{ex}
    For a simple description, write $\sum_i m_i \epsilon_i \in P$ as $(m_1, m_2, \dots)$. 
    Suppose $\mathfrak{g}_\infty = \mathfrak{c}_\infty$ and take $\lambda = (-2, -4, 1, -7, -5, 4, 0, -4, -4, \dots) \in P$. 
    Then the $W$-orbit of $\lambda$ contains unique $\nu = (0, -1, -2, -5, -7, -4, -4, \dots) \in W\lambda$ with
    \begin{eqnarray*}
        \nu^+ &=& (0, -1, -2, -4, -4, -4, \dots) \\
            &=& 4\Pi_0^\mathfrak{c} + 4\epsilon_1 + 3\epsilon_2 + 2\epsilon_3 = \Pi^\mathfrak{c}((3, 3, 2, 1), 4), \\
        \nu^0 &=& (0, 0, 0, -1, -3, 0, 0, 0, \dots) \\
            &=& -\epsilon_4-3\epsilon_5.
    \end{eqnarray*}
    Thus, we have $\lambda^0 = \varpi_{(3, 1)}$ and $\lambda^+ = \Pi^\mathfrak{c}((3, 3, 2, 1), 4)$.
\end{ex}

\section{Combinatorial realizations of extremal weight crystals} \label{sec:combinatorial realization}
\subsection{Spinor model} \label{subsec:spinor}
We recall a spinor model, which is introduced by Kwon \cite{K15, K16}, to describe the crystal base of integrable highest weight $\mathfrak{g}_\infty$-modules. 
\begin{rem}
    We focus a combinatorial description of $B(\lambda)$ for $\lambda \in P^+_{\rm int}$ (not $P^+$), 
    and we intentionally skip some related notions (see Remark \ref{rem:half-integer weight}).
    You can find the full description of a spinor model in \cite{K15,K16}; ${\bf T}^{\rm sp}$ in particular.
\end{rem}

For a semistandard tableau $T$ with two columns, denote by $T^{\tt L}$ (resp. $T^{\tt R}$) its left (resp. right) column tableau. 
For $T \in SST(\lambda(a, b, c))$, where $\lambda(a, b, c) = (2^{b+c}, 1^a)/(1^b)$ ($a, b, c \in \mathbb{Z}_+$) is a skew partition with two columns, 
suppose that the tableau $T'$ is obtained by sliding $T^{\tt R}$ by $k$ positions down for $0 \leq k \leq \min\{ a, b \}$. 
Define $\mathfrak{r}_T$ to be the maximal $k \geq 0$ such that $T'$ is semistandard.


For $a \in \mathbb{Z}_+$, let
\[ \mathbf{T}^\mathfrak{g}(a) \,=\, \{\, T \in SST(\lambda(a, b, c)) \,|\, (b, c) \in \mathcal{H}^\mathfrak{g},\ \mathfrak{r}_T \leq r^\mathfrak{g} \,\}, \]
where
\[
    \mathcal{H}^\mathfrak{g} = \begin{cases}
        \{ 0 \} \times \mathbb{Z}_+ & \mbox{if } \mathfrak{g} = \mathfrak{c} \\
        \mathbb{Z}_+ \times \mathbb{Z}_+ & \mbox{if } \mathfrak{g} = \mathfrak{b} \\
        2\mathbb{Z}_+ \times 2\mathbb{Z}_+ & \mbox{if } \mathfrak{g} = \mathfrak{d}
    \end{cases}, \quad
    r^\mathfrak{g} = \begin{cases}
        0 & \mbox{if } \mathfrak{g} = \mathfrak{b}\mbox{ or }\mathfrak{c}, \\
        1 & \mbox{if } \mathfrak{g} = \mathfrak{d}
    \end{cases},
\]
and
\[ \overline{\bf T}^\mathfrak{d}(0) = \bigsqcup_{(b, c) \in \mathcal{H}^\mathfrak{d}} SST(\lambda(0, b, c+1)). \]


For $(\lambda, \ell) \in \mathscr{P}(G)$, put $\ell(\lambda) = t$ and
\[
    \widehat{\bf T}^\mathfrak{g}(\lambda, \ell) = \begin{cases}
        {\bf T}^\mathfrak{g}(\lambda_\ell) \times \dots \times {\bf T}^\mathfrak{g}(\lambda_1) & \mbox{if } t \leq \ell, \\
        \overline{\bf T}^\mathfrak{d}(0)^{t-\ell} \times {\bf T}^\mathfrak{d}(\lambda_{2\ell-t}) \times \dots \times {\bf T}^\mathfrak{d}(\lambda_1) & \mbox{if } t > \ell.
    \end{cases}
\]
\begin{df}
    A spinor model ${\bf T}^\mathfrak{g}(\lambda, \ell)$ of shape $(\lambda, \ell) \in \mathscr{P}(G)$ 
    is the set of $(T_\ell, \dots, T_1) \in \widehat{\bf T}^\mathfrak{g}(\lambda, \ell)$ such that the pair $(T_{i+1}, T_i)$ is {\it admissible} for all $i = 1, \dots, \ell-1$ 
    (see \cite[Definition 6.7]{K15} and \cite[Definition 3.4]{K16}).
\end{df}
\begin{rem}
    The admissibility condition given in \cite{K16} seems different from that in \cite{K15}. 
    But they adopt different conventions and they are essentially the same 
    in terms that ${\bf T}^\mathfrak{g}(\lambda, \ell)$ is isomorphic to the crystal base of an integrable highest weight $\mathfrak{g}_\infty$-module (see Theorem \ref{thm:spinor}).
    In this paper, we follow the convention of \cite{K15}.
\end{rem}

\begin{thm}[{\cite[Theorem 7.4]{K15}}, {\cite[Theorem 4.4]{K16}}] \label{thm:spinor}
    For $(\lambda, \ell) \in \mathscr{P}(G)$, the set ${\bf T}^\mathfrak{g}(\lambda, \ell)$ is a $\mathfrak{g}_\infty$-crystal and 
    is isomorphic to $B(\Pi^\mathfrak{g}(\lambda, \ell))$ as $\mathfrak{g}_\infty$-crystals. 
\end{thm}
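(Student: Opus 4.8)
The plan is to exhibit ${\bf T}^\mathfrak{g}(\lambda, \ell)$ as a connected subcrystal of the explicit tensor product $\widehat{\bf T}^\mathfrak{g}(\lambda, \ell)$ and then to recognize it as $B(\Pi^\mathfrak{g}(\lambda, \ell))$ through its highest weight element. First I would treat the building blocks. Each single factor ${\bf T}^\mathfrak{g}(a)$, together with the bar-version $\overline{\bf T}^\mathfrak{d}(0)$, carries Kashiwara operators defined through the reading word of its two-column tableaux, and I would verify directly that, as $\mathfrak{g}_\infty$-crystals, ${\bf T}^\mathfrak{g}(a) \cong B(\Pi_a^\mathfrak{g})$ and $\overline{\bf T}^\mathfrak{d}(0) \cong B(\overline{\Pi}_0^\mathfrak{d})$; this is the $\ell = 1$ instance of the theorem. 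Since $\widehat{\bf T}^\mathfrak{g}(\lambda, \ell)$ is by definition the product of such factors, equipping it with the tensor-product crystal structure of Section~\ref{subsec:crystal} makes it isomorphic to $B(\Pi_{\lambda_\ell}^\mathfrak{g}) \otimes \cdots \otimes B(\Pi_{\lambda_1}^\mathfrak{g})$ when $t \leq \ell$, with the obvious $\overline{\bf T}^\mathfrak{d}(0)$-modification when $t > \ell$.

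Second, I would show that the admissibility condition cuts out a subcrystal, that is, that the admissible tuples are closed under $\widetilde{e}_j$ and $\widetilde{f}_j$ (as long as the operator does not return ${\bf 0}$). By the tensor product rule each operator acts on only one factor $T_i$ at a time, so the task reduces to analyzing how a single move $T_i \mapsto \widetilde{f}_j T_i$ (or the analogous move on $T_{i+1}$) affects the sliding statistic $\mathfrak{r}_T$ and the column comparison that defines admissibility of the adjacent pairs $(T_{i+1}, T_i)$ and $(T_i, T_{i-1})$. Establishing that admissibility is neither destroyed nor spuriously created under every such move guarantees that ${\bf T}^\mathfrak{g}(\lambda, \ell)$ is a union of connected components of $\widehat{\bf T}^\mathfrak{g}(\lambda, \ell)$.

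Third, I would locate a source and prove connectedness. Taking in each factor its own highest weight element yields a tuple $(T_\ell^0, \dots, T_1^0)$ that I expect to be the unique element of ${\bf T}^\mathfrak{g}(\lambda, \ell)$ annihilated by all $\widetilde{e}_j$; a weight count using the known weights of the $T_i^0$ and the recursive expression for $\Pi^\mathfrak{g}(\lambda, \ell)$ recorded after its definition gives ${\rm wt}(T_\ell^0, \dots, T_1^0) = \Pi^\mathfrak{g}(\lambda, \ell)$. Connectedness, namely that every admissible tuple is reached from this source by lowering operators, I would obtain by downward induction on the weight, using the already-understood connected structure of the building blocks $B(\Pi_a^\mathfrak{g})$.

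Finally, I would invoke the recognition principle for highest weight crystals: a normal connected $\mathfrak{g}_\infty$-crystal with a unique highest weight element of dominant weight $\Lambda$ is isomorphic to $B(\Lambda)$ (cf.\ \cite{Kas95,HongKang}). For infinite rank the most transparent route is stabilization: one restricts ${\bf T}^\mathfrak{g}(\lambda, \ell)$ to each subalgebra $\mathfrak{g}_n$, matches it with the finite-rank spin or Kashiwara--Nakashima tableau crystal \cite{KN94} of the corresponding $\mathfrak{g}_n$-dominant weight, and passes to the limit $n \to \infty$. The hard part is the admissibility closure in the second step: the interaction between the Kashiwara operators and the statistic $\mathfrak{r}_T$ is delicate, and confirming that no adjacent pair loses or gains admissibility under any $\widetilde{e}_j, \widetilde{f}_j$ is where the genuine combinatorial work lies, whereas the weight of the source and connectedness follow routinely once the building blocks are in hand.
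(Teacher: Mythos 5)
First, a point of comparison: the paper does not prove this statement at all --- Theorem \ref{thm:spinor} is imported verbatim from \cite[Theorem 7.4]{K15} and \cite[Theorem 4.4]{K16}, so within this paper the correct ``proof'' is the citation. Your outline is in fact a reasonable reconstruction of the architecture of Kwon's original argument (crystal structure on the factors, admissibility cutting out the component of a distinguished highest weight tuple, finite-rank stabilization), so the plan is not wrongheaded; but as written it has genuine gaps. The most serious is your recognition principle: it is false that a connected (semi)normal crystal with a unique highest weight element of dominant weight $\Lambda$ must be isomorphic to $B(\Lambda)$ --- already in finite rank there are connected seminormal crystals with a unique source that are not crystals of modules. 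What rescues the argument is exactly the stabilization route you mention in passing: one needs that the restriction of ${\bf T}^\mathfrak{g}(\lambda,\ell)$ to each $\mathfrak{g}_n$ is a disjoint union of genuine crystal bases, i.e.\ the finite-rank identification ${\bf T}^\mathfrak{g}_n(\lambda,\ell) \cong B_n(\Pi^\mathfrak{g}(\lambda,\ell))$ matching Kashiwara--Nakashima crystals. But that identification is itself the hard content (in this paper it appears as Proposition \ref{prop:bij spinor to KN}, resting on \cite{K18a,JangKwon21}), so labeling it ``the most transparent route'' without proof leaves the theorem unestablished rather than reduced.

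Second, the two combinatorial claims you defer are precisely the substance of \cite{K15,K16}, and one of them is misjudged in difficulty. Closure of admissibility under all $\widetilde{e}_i, \widetilde{f}_i$ you correctly flag as hard; but connectedness does \emph{not} ``follow routinely once the building blocks are in hand.'' A union of connected components of $\widehat{\bf T}^\mathfrak{g}(\lambda,\ell)$ closed under the operators could a priori contain several $\mathfrak{g}_\infty$-highest tuples; to prove ${\bf T}^\mathfrak{g}(\lambda,\ell)$ is a single component you must show every admissible non-source tuple admits a raising operator that \emph{stays admissible} (equivalently, rule out all other admissible highest weight elements), and ``downward induction on the weight'' gives no mechanism for this --- it is intertwined with the closure analysis and is where Kwon's insertion/separation arguments do real work. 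Two smaller gaps: the crystal structure on a factor ${\bf T}^\mathfrak{g}(a)$ is not purely a reading-word (type $A$) structure --- the operators $\widetilde{e}_0, \widetilde{f}_0$ change the shape parameters $(b,c)$, and their definition and the verification of ${\bf T}^\mathfrak{g}(a) \cong B(\Pi^\mathfrak{g}_a)$ (your $\ell = 1$ base case) is already nontrivial, especially for $\mathfrak{g} = \mathfrak{d}$ where $\mathcal{H}^\mathfrak{d} = 2\mathbb{Z}_+ \times 2\mathbb{Z}_+$, $r^\mathfrak{d} = 1$, and the extra family $\overline{\bf T}^\mathfrak{d}(0)$ with the case $\ell(\lambda) > \ell$ must be handled; and the uniqueness of the source among admissible tuples is asserted (``I expect'') rather than argued. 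In short: sound skeleton, consistent with the cited original proof strategy, but the three load-bearing steps (finite-rank normality, admissibility closure, single-component property) are exactly the theorem and remain unproved in your proposal.
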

The character of ${\bf T}^\mathfrak{g}(\lambda, \ell)$ is defined to be
\[ {\rm ch}\,{\bf T}^\mathfrak{g}(\lambda, \ell) \,=\, t^\ell \sum_{(T_\ell, \dots, T_1) \in {\bf T}^\mathfrak{g}(\lambda, \ell)} \prod_{i=1}^\ell \ {\bf x}^{T_i} \]
where $t$ is a formal symbol and ${\bf x}^T = \prod_{i=1}^\infty x_i^{m_i}$, 
with $m_i$ being the number of appearances of $i \geq 1$ in a semistandard tableau $T$. 
Indeed, we understand $x_i = e^{\epsilon_i}$ and $t = e^{\Pi^\mathfrak{g}_0}$ when we consider them as elements in the group algebra $\mathbb{Z}[P]$. 
An explicit formula of the character of a spinor model will be explained in Section \ref{sec:Jacobi-Trudi}.

For $n \in \mathbb{N}$ and $(\lambda, \ell) \in \mathscr{P}(G)$, define ${\bf T}_n^\mathfrak{g}(\lambda, \ell)$ 
to be the set of $T \in {\bf T}^\mathfrak{g}(\lambda, \ell)$ such that all entries of $T$ are filled with entries in $\{ 1, \dots, n \}$. 
By Theorem \ref{thm:spinor}, there is an isomorphism of $\mathfrak{g}_n$-crystals
\begin{equation} \label{eqn:iso between spinor and crystal}
    {\bf T}_n^\mathfrak{g}(\lambda, \ell) \cong B_n(\Pi^\mathfrak{g}(\lambda, \ell)).
\end{equation}

\subsection{Kashiwara-Nakashima tableaux} \label{subsec:KN}
Before we consider KN tableaux, we introduce the notion of generalized partitions and semistandard tableaux of a generalized partition shape. 
A generalized partition is a sequence $\lambda = (\lambda_1, \lambda_2, \dots, \lambda_n)$ of integers such that $\lambda_1 \geq \lambda_2 \geq \dots \geq \lambda_{n-1} \geq |\lambda_n|$. 
When $\lambda_n < 0$, the Young diagram for a generalized partition $\lambda$ is the Young diagram for $(\lambda_1, \dots, \lambda_{n-1}, |\lambda_n|)$ 
with coloring at the $n$-th row.
As similarly as usual partitions, set the length $\ell(\lambda)$ of a generalized partition $\lambda$ to be the number of its nonzero components 
and $\varpi_\lambda = \sum_{i=1}^{\ell(\lambda)} \lambda_i \epsilon_i$. 
For $n \in \mathbb{Z}_+$, we denote by $\mathcal{GP}_n$ the set of generalized partitions $\lambda$ with $\ell(\lambda) \leq n$, 
and set $\mathcal{GP}_n^- = \mathcal{GP}_n \,\backslash\, \mathcal{P}_n$ to be the set of generalized partitions $\lambda = (\lambda_1, \dots, \lambda_n)$ with $\lambda_n < 0$. 
A semistandard tableau of shape $\lambda = (\lambda_1, \dots, \lambda_n) \in \mathcal{GP}_n^-$ 
is a semistandard tableau of shape $(\lambda_1, \dots, \lambda_{n-1}, |\lambda_n|) \in \mathcal{P}_n$ with the coloring at its $n$-th row. 
For example, let $T_\lambda$ be the semistandard tableau of shape $\lambda = (\lambda_1, \dots, \lambda_n) \in \mathcal{GP}_n^-$ such that 
\begin{equation} \label{eqn:ext wt vec for gp}
    T_\lambda(i, j) = \begin{cases}
        \overline{n} & \mbox{if } i = 1,\ 1 \leq j \leq |\lambda_n|, \\
        i-1 & \mbox{if } 2 \leq i \leq \ell(\lambda),\ 1 \leq j \leq |\lambda_n|, \\
        i & \mbox{if } 1 \leq i \leq \ell(\lambda),\ j > |\lambda_n|.
    \end{cases}
\end{equation}
In particular, if we take $n=4$ and $\lambda = (4, 3, 1, -1) \in \mathcal{GP}^-_n$, then we have the following $T_\lambda$.
\[ T_\lambda = \begin{ytableau} \overline{4} & 1 & 1 & 1 \\ 1 & 2 & 2 \\ 2 \\ *(black) {\color{white} 3} \end{ytableau} \]

For a positive integer $n$, let $\mathcal{I}_n^\mathfrak{g}$ be the following ordered sets.
\begin{eqnarray*}
    \mathcal{I}_n^\mathfrak{b} &=& \{\, \overline{n} < \dots < \overline{1} < 0 < 1 < \dots < n \,\} \\
    \mathcal{I}_n^\mathfrak{c} &=& \{\, \overline{n} < \dots < \overline{1} < 1 < \dots < n \,\} \\
    \mathcal{I}_n^\mathfrak{d} &=& \left\{\, \overline{n} < \dots < \overline{2} < \begin{matrix} \overline{1} \\ 1 \end{matrix} < 2 < \dots < n \,\right\} \quad (n \geq 2)
\end{eqnarray*}
Here, ($1$, $\overline{1}$) is the unique non-comparable pair in $\mathcal{I}_n^\mathfrak{d}$. 
An $\mathcal{I}_n^\mathfrak{g}$-tableau is said to be $\mathcal{I}_n^\mathfrak{g}$-semistandard if entries in each row are weakly increasing from left to right and 
entries in each column are strictly increasing (not weakly decreasing when $\mathfrak{g} = \mathfrak{d}$) from top to bottom with the following exceptions:
\begin{itemize}
    \item when $\mathfrak{g} = \mathfrak{b}$, the letter $0$ cannot be repeated in each row and can be repeated in each column.
    \item when $\mathfrak{g} = \mathfrak{d}$, two letters $1$ and $\bar{1}$ can appear alternatively and successively in each column.
\end{itemize}
By definition, two letters $1$ and $\bar{1}$ cannot appear simultaneously in each row of an $\mathcal{I}^\mathfrak{d}_n$-semistandard tableau.
\begin{df}
    Suppose that $C$ is an $\mathcal{I}_n^\mathfrak{g}$-column tableau.
    For a positive integer $z$, let
    \[ N_C(z;n) = \left| \{ x \in C \,|\, x \leq \overline{z} \mbox{ or } x \geq z \} \right|. \]
    We say that a column tableau $C$ is $n$-admissible if ${\rm ht}(C) \leq n$ and $N_C(z;n) \leq n-z+1$ for all $1 \leq z \leq n$. 
    Here, the number $N_C(1; n)$ captures the multiplicity of $1$ and $\bar{1}$ in $C$ when $\mathfrak{g} = \mathfrak{d}$.
\end{df}

The condition $N_C(z;n) \leq n-z+1$ is equivalent to the following condition: 
if $C(p, 1) = \overline{z}$ and $C(q, 1) = z$ for some $1 \leq z \leq n$ and $1 \leq p < q \leq {\rm ht}(C)$, then we have $(q-p)+(n-z+1) > {\rm ht}(C)$. 
When $\mathfrak{g} = \mathfrak{d}$, there might be $p > q$ such that $C(p, 1) = \overline{1}$ and $C(q, 1) = 1$. 
In this case, the condition is replaced with $(p-q) + n > {\rm ht}(C)$.
\begin{rem} \label{rem:admissible}
    Any $\mathcal{I}_n^\mathfrak{g}$-column tableau is always $N$-admissible (as an $\mathcal{I}^\mathfrak{g}_n$-tableau) 
    for sufficiently large $N \,(\geq {\rm ht}(C))$ regardless of whether it is $n$-admissible or not.
\end{rem}

\begin{df}[\cite{KN94}] \label{def:KN tableau}
    A ($\mathfrak{g}_n$-type) KN tableau of shape $\lambda \in \mathcal{P}_n$ ($\lambda \in \mathcal{GP}_n$ when $\mathfrak{g} = \mathfrak{d}$) is 
    an $\mathcal{I}_n^\mathfrak{g}$-semistandard tableau $T$ of shape $\lambda$ such that 
    all columns are $n$-admissible and it satisfies the following conditions:
    \begin{itemize}
        \item when $\mathfrak{g} = \mathfrak{c}$
        \begin{enumerate}
            \item[($\mathfrak{c}$-1)] if either $T(p, j) = \overline{a}, T(q, j) = \overline{b}, T(r, j) = b, T(s, j+1) = a$ or 
            $T(p, j) = \overline{a}, T(q, j+1) = \overline{b}, T(r, j+1) = b, T(r, j+1) = a$ for some $1 \leq b \leq a \leq n$ and $p \leq q < r \leq s$, 
            then we have $(q-p)+(s-r) < a-b$.
        \end{enumerate}
        \item when $\mathfrak{g} = \mathfrak{b}$
        \begin{enumerate}
            \item[($\mathfrak{b}$-1)] if either $T(p, j) = \overline{a}, T(q, j) = \overline{b}, T(r, j) = b, T(s, j+1) = a$ or 
            $T(p, j) = \overline{a}, T(q, j+1) = \overline{b}, T(r, j+1) = b, T(r, j+1) = a$ for some $1 < b \leq a \leq n$ and $p \leq q < r \leq s$, 
            then we have $(q-p)+(s-r) < a-b$.
            \item[($\mathfrak{b}$-2)] suppose $T(p, j) = \overline{a}, T(s, j+1) = a$ for some $j$, $1 < a \leq n$ and $p < s$. 
            If either $T(q, j), T(q+1, j) \in \{ 1, 0, \bar{1} \}$ or $T(q, j+1), T(q+1, j+1) \in \{ 1, 0, \bar{1} \}$ for some $p \leq q < r = q+1 \leq s$, 
            then we have $(q-p)+(s-r) < a-1$.
            \item[($\mathfrak{b}$-3)] there is no $p < q$ such that $T(p, j) \in \{ \bar{1}, 0 \}, T(q, j+1) \in \{ 0, 1 \}$ for some $j$.
        \end{enumerate}
        \item when $\mathfrak{g} = \mathfrak{d}$
        \begin{enumerate}
            \item[($\mathfrak{d}$-1)] suppose $\ell(\lambda) = n$ with $\lambda_n > 0$. 
            For each $j \geq 1$ such that the height of the $j$-th column from the left is $n$, 
            if $\overline{1}$ (resp. $1$) fills the $(k, j)$-th entry of $T$, then $n-k$ is even (resp. odd).
            \item[($\mathfrak{d}$-2)] suppose $\ell(\lambda) = n$ with $\lambda_n < 0$. 
            For each $j \geq 1$ such that the height of the $j$-th column from the left is $n$, 
            if $\overline{1}$ (resp. $1$) fills the $(k, j)$-th entry of $T$, then $n-k$ is odd (resp. even).
            \item[($\mathfrak{d}$-3)] if either $T(p, j) = \overline{a}, T(q, j) = \overline{b}, T(r, j) = b, T(s, j+1) = a$ or 
            $T(p, j) = \overline{a}, T(q, j+1) = \overline{b}, T(r, j+1) = b, T(r, j+1) = a$ for some $1 < b \leq a \leq n$ and $p \leq q < r \leq s$, 
            then we have $(q-p)+(s-r) < a-b$.
            \item[($\mathfrak{d}$-4)] suppose if $T(p, j) = \overline{a}, T(s, j+1) = a$ for some $j$, $1 < a \leq n$ and $p < s$. 
            If either $T(q, j), T(q+1, j) \in \{ 1, \bar{1} \}$ with $T(q, j) \neq T(r, j)$ or $T(q, j+1), T(r, j+1) \in \{ 1, \bar{1} \}$ with $T(q, j+1) \neq T(r, j+1)$ for some $p \leq q < r=q+1 \leq s$, 
            then we have $(q-p)+(s-r) < a-1$.
            \item[($\mathfrak{d}$-5)] there is no $p < q$ such that $T(p, j) \in \{ \bar{1}, 1 \}, T(q, j+1) \in \{ \bar{1}, 1 \}$ for some $j$.
            \item[($\mathfrak{d}$-6)] suppose that $T(p, j) = \overline{a}, T(s, j+1) = a$ for some $j$, $1 < a \leq n$, and $p<s$. 
            If $T(q, j+1) \in \{ \bar{1}, 1 \}$ and $T(r, j) \in \{ \bar{1}, 1 \}$ for some $p \leq q < r \leq s$ and
            $s-q+1$ is even when $T(q, j+1) = T(r, j)$ and $s-q+1$ is odd when $T(q, j+1) \neq T(r, j)$, then $s-p < a-1$.
        \end{enumerate}
    \end{itemize}

    Denote by ${\bf KN}_n^\mathfrak{g}(\lambda)$ the set of ($\mathfrak{g}_n$-type) KN tableau of shape $\lambda$.
\end{df}
For example, ${\bf KN}_n^\mathfrak{g}((1^h))$ is the set of $n$-admissible column tableaux of height $h \geq 0$ except when $\mathfrak{g} = \mathfrak{d}$ with $n = h$. 
Note that the set of $\mathfrak{d}_n$-type $n$-admissible column tableaux of height $n$ is ${\bf KN}_n^\mathfrak{d}((1^n)) \cup {\bf KN}_n^\mathfrak{d}((1^{n-1}, -1))$.

\begin{rem}
    (1) In this article, we take simple roots as the negative of simple roots taken in \cite{KN94}. 
    Due to this choice, the condition for KN tableaux seems different from that given in \cite{KN94}, but they are equivalent.

    (2) There is an equivalent condition for a tableau to be a KN tableau, which is given in \cite{Le02, Le03}; its split form is semistandard (with additional conditions when $\mathfrak{g} = \mathfrak{d}$).
\end{rem}

For $\lambda \in \mathcal{P}_n$ ($\lambda \in \mathcal{GP}_n$ when $\mathfrak{g} = \mathfrak{d}$), 
it is proved in \cite{KN94} that ${\bf KN}_n^\mathfrak{g}(\lambda)$ is a $\mathfrak{g}_n$-crystal and there is an isomorphism of $\mathfrak{g}_n$-crystals
\begin{equation} \label{eqn:iso between KN and crystal}
    {\bf KN}_n^\mathfrak{g}(\lambda) \, \cong \, B_n(\varpi_\lambda).
\end{equation}
For example, the crystal graph of ${\bf KN}_n^\mathfrak{g}((1))$ is as follows.
\[ {\bf KN}^\mathfrak{b}_n((1)) \quad:\quad \raisebox{-1em}{\begin{tikzpicture}
    \node at (-1.4, 0) {$\begin{ytableau} \overline{1} \end{ytableau}$};
    \node at (-2.8, 0) {$\cdots$};
    \node at (-4.2, 0) {$\begin{ytableau} \overline{n} \end{ytableau}$};
    \node at (0, 0) {$\begin{ytableau} 0 \end{ytableau}$};
    \node at (1.4, 0) {$\begin{ytableau} 1 \end{ytableau}$};
    \node at (2.8, 0) {$\cdots$};
    \node at (4.2, 0) {$\begin{ytableau} n \end{ytableau}$};

    \draw[->] (-0.3-1.4*0.5, 0) -- (0.3-1.4*0.5, 0);
    \node [above] at (-0.7, 0) {\scriptsize $0$};
    \draw[->] (-0.3+1.4*0.5, 0) -- (0.3+1.4*0.5, 0);
    \node [above] at (0.7, 0) {\scriptsize $0$};
    \draw[->] (-0.3+1.4*1.5, 0) -- (0.3+1.4*1.5, 0);
    \node [above] at (2.1, 0) {\scriptsize $1$};
    \draw[->] (-0.3+1.4*2.5, 0) -- (0.3+1.4*2.5, 0);
    \node [above] at (3.5, 0) {\scriptsize $n-1$};
    \draw[->] (-0.3-1.4*1.5, 0) -- (0.3-1.4*1.5, 0);
    \node [above] at (-2.1, 0) {\scriptsize $1$};
    \draw[->] (-0.3-1.4*2.5, 0) -- (0.3-1.4*2.5, 0);
    \node [above] at (-3.5, 0) {\scriptsize $n-1$};
\end{tikzpicture}} \]
\[ {\bf KN}^\mathfrak{c}_n((1)) \quad:\quad \raisebox{-1em}{\begin{tikzpicture}
    \node at (-0.7, 0) {$\begin{ytableau} \overline{1} \end{ytableau}$};
    \node at (-2.1, 0) {$\cdots$};
    \node at (-3.5, 0) {$\begin{ytableau} \overline{n} \end{ytableau}$};
    \node at (0.7, 0) {$\begin{ytableau} 1 \end{ytableau}$};
    \node at (2.1, 0) {$\cdots$};
    \node at (3.5, 0) {$\begin{ytableau} n \end{ytableau}$};

    \draw[->] (-0.3-1.4*0, 0) -- (0.3-1.4*0, 0);
    \node [above] at (0, 0) {\scriptsize $0$};
    \draw[->] (-0.3+1.4*1, 0) -- (0.3+1.4*1, 0);
    \node [above] at (1.4, 0) {\scriptsize $1$};
    \draw[->] (-0.3+1.4*2, 0) -- (0.3+1.4*2, 0);
    \node [above] at (2.8, 0) {\scriptsize $n-1$};
    \draw[->] (-0.3-1.4*1, 0) -- (0.3-1.4*1, 0);
    \node [above] at (-1.4, 0) {\scriptsize $1$};
    \draw[->] (-0.3-1.4*2, 0) -- (0.3-1.4*2, 0);
    \node [above] at (-2.8, 0) {\scriptsize $n-1$};
\end{tikzpicture}} \]
\[ {\bf KN}^\mathfrak{d}_n((1)) \quad:\quad \raisebox{-2em}{\begin{tikzpicture}
    \node at (0, 0.5) {$\begin{ytableau} \overline{1} \end{ytableau}$};
    \node at (0, -0.5) {$\begin{ytableau} 1 \end{ytableau}$};
    \node at (-1.4, 0) {$\begin{ytableau} \overline{2} \end{ytableau}$};
    \node at (-2.8, 0) {$\cdots$};
    \node at (-4.2, 0) {$\begin{ytableau} \overline{n} \end{ytableau}$};
    \node at (1.4, 0) {$\begin{ytableau} 2 \end{ytableau}$};
    \node at (2.8, 0) {$\cdots$};
    \node at (4.2, 0) {$\begin{ytableau} n \end{ytableau}$};

    \draw[->] (-0.3-1.4*0.5, 0.05) -- (0.3-1.4*0.5, 0.5);
    \node [above] at (-0.7, 0.3) {\scriptsize $1$};
    \draw[->] (-0.3-1.4*0.5, -0.05) -- (0.3-1.4*0.5, -0.5);
    \node [below] at (-0.7, -0.3) {\scriptsize $0$};
    \draw[->] (-0.3+1.4*0.5, 0.5) -- (0.3+1.4*0.5, 0.05);
    \node [above] at (0.7, 0.3) {\scriptsize $0$};
    \draw[->] (-0.3+1.4*0.5, -0.5) -- (0.3+1.4*0.5, -0.05);
    \node [below] at (0.7, -0.3) {\scriptsize $1$};
    \draw[->] (-0.3+1.4*1.5, 0) -- (0.3+1.4*1.5, 0);
    \node [above] at (2.1, 0) {\scriptsize $2$};
    \draw[->] (-0.3+1.4*2.5, 0) -- (0.3+1.4*2.5, 0);
    \node [above] at (3.5, 0) {\scriptsize $n-1$};
    \draw[->] (-0.3-1.4*1.5, 0) -- (0.3-1.4*1.5, 0);
    \node [above] at (-2.1, 0) {\scriptsize $2$};
    \draw[->] (-0.3-1.4*2.5, 0) -- (0.3-1.4*2.5, 0);
    \node [above] at (-3.5, 0) {\scriptsize $n-1$};
\end{tikzpicture}} \]
We have ${\rm wt}(T_\lambda) = \varpi_\lambda$ (cf. \eqref{eqn:ext wt vec for partition} and \eqref{eqn:ext wt vec for gp}) 
and $T_\lambda \in {\bf KN}_n^\mathfrak{g}(\lambda)$ corresponds to $b_{\varpi_\lambda} \in B_n(\varpi_\lambda)$ under the isomorphism \eqref{eqn:iso between KN and crystal} 
since $T_\lambda$ (resp. $b_{\varpi_\lambda}$) is the unique element in ${\bf KN}^\mathfrak{g}_n(\lambda)$ (resp. $B_n(\varpi_{\lambda})$) of weight $\varpi_\lambda$. 
Thus, ${\bf KN}^\mathfrak{g}_n(\lambda)$ is an extremal weight $\mathfrak{g}_n$-crystal generated by $T_\lambda$.

For $(\lambda, \ell) \in \mathscr{P}(G)$ with $\lambda_1 \leq n$ and $\ell(\lambda) \leq \ell$, define a partition
\[ \rho_n(\lambda, \ell) = (n-\lambda_\ell, n-\lambda_{\ell-1}, \dots, n-\lambda_1)', \]
and for $(\lambda, \ell) \in \mathscr{P}(G)$ with $\lambda_1 \leq n$ and $\ell(\lambda) = t > \ell$, define a generalized partition
\[ \rho_n(\lambda, \ell) = (\rho_1, \dots, \rho_{n-1}, \ell-t), \]
where $(\rho_1, \dots, \rho_{n-1}) = \rho_n((\lambda_1, \dots, \lambda_{2\ell-t}, 1^{t-\ell}), \ell)$ (cf. \eqref{eqn:well-defined partition}).
For example, when $\mathfrak{g} = \mathfrak{d}$ and $n=4$, we have
\[ \rho_n((3,2,1,1,1), 4) \,=\, \ydiagram{4,3,1}*[*(black)]{0,0,0,1}. \]

We already have an isomorphism of $\mathfrak{g}_n$-crystals between ${\bf T}^\mathfrak{g}_n(\lambda, \ell)$ and ${\bf KN}^\mathfrak{g}_n(\rho_n(\lambda, \ell))$ 
(cf. \eqref{eqn:iso between spinor and crystal} and \eqref{eqn:iso between KN and crystal}), 
but we can find an explicit description of the isomorphism in \cite{K18a} when $\mathfrak{g} = \mathfrak{b}$ or $\mathfrak{c}$, 
and in \cite{JangKwon21} when $\mathfrak{g} = \mathfrak{d}$.
\begin{prop} \label{prop:bij spinor to KN}
    For $(\lambda, \ell) \in \mathscr{P}(G)$, we have an isomorphism of $\mathfrak{g}_n$-crystals.
    \begin{equation} \label{eqn:bij spinor to KN}
        \xymatrixcolsep{2pc}\xymatrixrowsep{0.5pc}\xymatrix{
            (\,\cdot\,)^{\tt ad} \,:\, {\bf T}_n^\mathfrak{g}(\lambda,\ell)  \ar@{->}[r]  & \ {\bf KN}_n^\mathfrak{g}(\rho_n(\lambda,\ell)) \\
            {\bf T}=(T_\ell,\dots,T_1)  \ar@{|->}[r] & {\bf T}^{\tt ad}:= (T_\ell^{\tt ad},\dots, T_1^{\tt ad}) }
    \end{equation}
    Here, the $i$-th column of ${\bf T}^{\tt ad}$ from the left is $T_i^{\tt ad}$ for $1 \leq i \leq \ell$.
\end{prop}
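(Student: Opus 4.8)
The plan is to prove that $(\,\cdot\,)^{\tt ad}$ is a weight-preserving bijection and that it intertwines the Kashiwara operators $\widetilde{e}_i, \widetilde{f}_i$ for $i = 0, \dots, n-1$. Since Theorem \ref{thm:spinor} (through \eqref{eqn:iso between spinor and crystal}) gives ${\bf T}_n^\mathfrak{g}(\lambda, \ell) \cong B_n(\Pi^\mathfrak{g}(\lambda, \ell))$ and \eqref{eqn:iso between KN and crystal} gives ${\bf KN}_n^\mathfrak{g}(\rho_n(\lambda, \ell)) \cong B_n(\varpi_{\rho_n(\lambda, \ell)})$, both sides are connected extremal weight $\mathfrak{g}_n$-crystals. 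As a first step I would project $\Pi^\mathfrak{g}(\lambda, \ell)$ to the $\mathfrak{g}_n$-weight lattice and check that it lies in the same $W_n$-orbit as $\varpi_{\rho_n(\lambda, \ell)}$; this is a direct computation from the definition of $\rho_n(\lambda, \ell)$ together with the projection $\Lambda_0^\mathfrak{g} \mapsto -\tfrac{\epsilon_\mathfrak{g}}{2}(\epsilon_1 + \dots + \epsilon_n)$, and by the finite-type classification of crystals it already guarantees an abstract isomorphism. The substantive task is then to verify that the explicit combinatorial map $(\,\cdot\,)^{\tt ad}$ realizes this isomorphism.

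For well-definedness and bijectivity I would argue column by column. Each factor $T_i$ of a spinor datum is a two-column semistandard tableau of skew shape $\lambda(a,b,c)$ with entries in $\{1, \dots, n\}$, and I would read the operation $T_i \mapsto T_i^{\tt ad}$ as a complementation that folds these two columns into a single $n$-admissible $\mathcal{I}_n^\mathfrak{g}$-column: the entries that appear are recorded in unbarred form, while the complementary letters are barred and inserted so that the resulting column has height $n - \lambda_i$, matching the corresponding column of $\rho_n(\lambda, \ell)$. I would first establish the single-column statement, that $T \mapsto T^{\tt ad}$ is a bijection ${\bf T}_n^\mathfrak{g}(a) \to {\bf KN}_n^\mathfrak{g}((1^{n-a}))$, checking that the constraints $\mathfrak{r}_T \leq r^\mathfrak{g}$ and $(b,c) \in \mathcal{H}^\mathfrak{g}$ translate precisely into the $n$-admissibility of the output column. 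Then I would show that the juxtaposition $(T_\ell^{\tt ad}, \dots, T_1^{\tt ad})$ has shape $\rho_n(\lambda, \ell)$ and is $\mathcal{I}_n^\mathfrak{g}$-semistandard, the key point being that the admissibility of each consecutive pair $(T_{i+1}, T_i)$ in the spinor model corresponds exactly to the between-columns conditions $(\mathfrak{g}\text{-}k)$ of Definition \ref{def:KN tableau}. Conversely, every KN tableau of shape $\rho_n(\lambda, \ell)$ must be shown to arise this way, yielding bijectivity.

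To obtain the intertwining property I would use reading words and the signature rule. The crystal structure on both models is computed by reading each tableau into a word in $\mathcal{I}_n^\mathfrak{g}$ (column by column in the prescribed order) and applying the $i$-signature bracketing; for the spinor model this is the reading underlying its embedding into a tensor product of the column crystals ${\bf T}_n^\mathfrak{g}(a)$. The plan is to show that, up to the complementation used in the single-column step, the reading word of ${\bf T}^{\tt ad}$ agrees, as an element of the relevant crystal-equivalence class, with that of ${\bf T}$, factor by factor and in the same concatenation order. Since $\widetilde{e}_i, \widetilde{f}_i$ depend only on this word through the signature rule, their actions transport through $(\,\cdot\,)^{\tt ad}$, while weight-preservation is immediate from the complementation. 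Combined with bijectivity and connectedness, this upgrades the abstract isomorphism of the first step to the explicit one.

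The main obstacle will be the combinatorial equivalence ``spinor admissibility $\Longleftrightarrow$ KN conditions,'' and overwhelmingly so in type $\mathfrak{d}$: there one must match the admissible-pair condition of \cite[Definition 3.4]{K16} against the six conditions $(\mathfrak{d}\text{-}1)$--$(\mathfrak{d}\text{-}6)$, track the non-comparable pair $(1, \overline{1})$ and the alternation rule within each column, and handle the coloring together with the negative last part of the generalized partition $\rho_n(\lambda, \ell)$ (the entry $\ell - t$) in the case $t > \ell$, where the extra factors $\overline{\bf T}^\mathfrak{d}(0)$ enter. I would therefore treat types $\mathfrak{b}, \mathfrak{c}$ first, where the conditions $(\mathfrak{b}\text{-}1)$--$(\mathfrak{b}\text{-}3)$ and $(\mathfrak{c}\text{-}1)$ are comparatively rigid, and then adapt the argument to $\mathfrak{d}$; the explicit descriptions of this column-folding bijection and its crystal-theoretic compatibility in \cite{K18a} (for $\mathfrak{g} = \mathfrak{b}, \mathfrak{c}$) and \cite{JangKwon21} (for $\mathfrak{g} = \mathfrak{d}$) provide the template for these verifications.
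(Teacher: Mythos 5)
Your overall outline is sound, but you should know that the paper does not actually prove this proposition: it observes that the abstract isomorphism is already forced by combining \eqref{eqn:iso between spinor and crystal} and \eqref{eqn:iso between KN and crystal} (both sides realize $B_n$ of the same highest weight, since $\varpi_{\rho_n(\lambda,\ell)}$ is exactly the projection of $\Pi^\mathfrak{g}(\lambda,\ell)$ to the $\mathfrak{g}_n$-weight lattice), and then cites \cite{K18a} (types $\mathfrak{b},\mathfrak{c}$) and \cite{JangKwon21} (type $\mathfrak{d}$) for the statement that the explicit column-folding map $(\,\cdot\,)^{\tt ad}$ realizes it. Your first step is precisely the paper's logic, and the rest of your plan is a roadmap for reconstructing the content of those two references rather than an alternative to anything in the paper itself. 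In that sense the proposal is aligned in approach; what you would be supplying, if carried out, is the proof the paper deliberately outsources, including the admissibility-versus-KN-conditions dictionary that you correctly identify as the hard core (and which in type $\mathfrak{d}$ also involves the correction to \cite[(3.10)]{JangKwon21} recorded in the remark after the proposition).

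One step of your plan is inaccurate as stated and would fail without repair: the claim that the crystal structure on \emph{both} models is computed by reading words and the $i$-signature rule. That is true for the indices $i \geq 1$ (the $\mathfrak{gl}$-part), but the operators $\widetilde{e}_0, \widetilde{f}_0$ on the spinor side are not given by bracketing a reading word in $\mathcal{I}_n^\mathfrak{g}$; they are defined directly on the two-column factors ${\bf T}^\mathfrak{g}(a)$ (moving boxes between columns subject to $\mathfrak{r}_T \leq r^\mathfrak{g}$), and intertwining them with the $0$-arrows on the KN side is a separate verification — indeed this is one of the genuinely nontrivial points in \cite{K18a} and \cite{JangKwon21}. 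Relatedly, weight preservation is not purely a complementation statement: you must track the $\ell\,\Pi_0^\mathfrak{g}$ shift, i.e., that $\Pi_0^\mathfrak{g}$ projects to $-\frac{\epsilon_\mathfrak{g}}{2}\cdot 2\,(\epsilon_1+\dots+\epsilon_n)$, which is what makes the folded column of height $n-\lambda_i$ have the right $\mathfrak{g}_n$-weight. Finally, in type $\mathfrak{d}$ your single-column bijection needs the caveat the paper itself flags: the $n$-admissible columns of height $n$ split as ${\bf KN}_n^\mathfrak{d}((1^n)) \cup {\bf KN}_n^\mathfrak{d}((1^{n-1},-1))$, which is exactly where $\overline{\bf T}^\mathfrak{d}(0)$, the conditions ($\mathfrak{d}$-1)/($\mathfrak{d}$-2), and the negative last part $\ell - t$ of $\rho_n(\lambda,\ell)$ must be matched up; your plan gestures at this but any complete write-up would have to carry it through the pairwise admissibility step as well, not only the single-column step.
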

\begin{rem}
    The $\epsilon$ appearing in \cite[(3.10)]{JangKwon21} should be corrected as follows:
    \[ \Phi_a(T) = \mathcal{F}^{n-a-\epsilon}(T_-, \widetilde{T}_+), \]
    where
    \[ \epsilon = \begin{cases}
        1 & \mbox{if $a \neq n$ and ${\rm ht}(T_+) - a$ is odd,} \\
        0 & \mbox{otherwise.}
    \end{cases} \]
    Note that $\Phi_a(T)$ has residue $\epsilon$.
\end{rem}

\subsection{KN tableaux of type $\mathfrak{g}_\infty$} \label{subsec:KNL}
For any $\lambda \in \mathcal{P}$ and a sufficiently large $n$, we easily observe that ${\bf KN}_n^\mathfrak{g}(\lambda) \subseteq {\bf KN}_{n+1}^\mathfrak{g}(\lambda)$. 
Now we introduce a tableau model introduced by Lecouvey \cite{Le09}.
\begin{df}
    For $\lambda \in \mathcal{P}$, define
    \[ {\bf KN}^\mathfrak{g}(\lambda) = \bigcup_{n \geq \ell(\lambda)} {\bf KN}_n^\mathfrak{g}(\lambda), \]
    where the union is over $n > \ell(\lambda)$ when $\mathfrak{g} = \mathfrak{d}$, 
    and call it the set of $\mathfrak{g}_\infty$-type KN tableaux of shape $\lambda$.
    It is the set of $\mathcal{I}^\mathfrak{g}$-semistandard tableaux of shape $\lambda$ satisfying the conditions appearing in Definition \ref{def:KN tableau}, 
    where $\mathcal{I}^\mathfrak{g}$ are the following (partially) ordered sets.
    \begin{eqnarray*}
        \mathcal{I}^\mathfrak{b} &=& \{\, \dots < \overline{n} < \dots < \overline{1} < 0 < 1 < \dots < n < \dots \,\} \\
        \mathcal{I}^\mathfrak{c} &=& \{\, \dots < \overline{n} < \dots < \overline{1} < 1 < \dots < n < \dots \,\} \\
        \mathcal{I}^\mathfrak{d} &=& \left\{\, \dots < \overline{n} < \dots < \overline{2} < \begin{matrix} \overline{1} \\ 1 \end{matrix} < 2 < \dots < n < \dots \,\right\}
    \end{eqnarray*}
    Here, two letters $1$ and $\bar{1}$ in $\mathcal{I}^\mathfrak{d}$ are not comparable.
\end{df}
We remark that we can always find $\lambda^\dagger \in \mathcal{P}$ for $\lambda \in E$ 
such that $B(\lambda) \cong B(\varpi_{\lambda^\dagger})$ as $\mathfrak{g}_\infty$-crystals (see Remark \ref{rem:Weyl group orbit D}), 
and hence it suffices to define ${\bf KN}^\mathfrak{g}(\lambda)$ for $\lambda \in \mathcal{P}$ (not $\mathcal{GP}$). 
For example, ${\bf KN}^\mathfrak{g}((1^h))$ is the set of $\mathcal{I}^\mathfrak{g}$-column tableaux of height $h \geq 0$ 
since any $\mathcal{I}^\mathfrak{g}$-column tableau is $N$-admissible for sufficiently large $N$ by Remark \ref{rem:admissible}. 
Indeed, the $n$-admissible column condition turns out to be redundant when considering ${\bf KN}^\mathfrak{g}(\lambda)$.

We define a $\mathfrak{g}_\infty$-crystal structure of ${\bf KN}^\mathfrak{g}(\lambda)$ as the induced one 
from the $\mathfrak{g}_n$-crystal structure of ${\bf KN}^\mathfrak{g}_n(\lambda)$ (cf. \cite{Le09}), which is constructed as follows:
by definition, a tableau $T \in {\bf KN}^\mathfrak{g}(\lambda)$ is contained in ${\bf KN}^\mathfrak{g}_n(\lambda)$ for some $n \in \mathbb{N}$. 
For $k < n$, set $\widetilde{e}_k T$ to be the tableau considering $T$ as an element of ${\bf KN}^\mathfrak{g}_n(\lambda)$. 
For $k \geq n$, we already know $T \in {\bf KN}^\mathfrak{g}_n \subseteq {\bf KN}^\mathfrak{g}_{k+1}(\lambda)$ and set $\widetilde{e}_k T$ to be the tableau 
considering $T$ as an element of ${\bf KN}^\mathfrak{g}_{k+1}(\lambda)$. 
The other functions $\widetilde{f}_i$ and $\varepsilon_i, \varphi_i$ are defined similarly. 

We know that $B_n(\varpi_\mu) \cong {\bf KN}^\mathfrak{g}_n(\mu)$ is a $\mathfrak{g}_n$-subcrystal 
of $B_{n+1}(\varpi_\mu) \cong {\bf KN}^\mathfrak{g}_{n+1}(\mu)$ generated by $b_{\varpi_\mu}$ for $\mu \in \mathcal{P}$ and sufficiently large $n$, 
which guarantees the well-definedness of our $\mathfrak{g}_\infty$-crystal.
Note that the weight of $T$ does not depend on $n$ and it is clearly well-defined. 

By construction, we easily show that ${\bf KN}^\mathfrak{g}(\lambda)$ is connected. 
In particular, any $T \in {\bf KN}^\mathfrak{g}(\lambda)$ is connected to $T_\lambda$.
Now, we give the first main result of this paper.
\begin{thm} \label{thm:level zero realization}
    For $\lambda \in \mathcal{P}$, a $\mathfrak{g}_\infty$-crystal ${\bf KN}^\mathfrak{g}(\lambda)$ is isomorphic to the extremal weight crystal $B(\varpi_\lambda)$ as $\mathfrak{g}_\infty$-crystals. 
    In particular, $T_\lambda \in {\bf KN}^\mathfrak{g}(\lambda)$ corresponds to $b_{\varpi_\lambda} \in B(\varpi_\lambda)$ under the isomorphism.
\end{thm}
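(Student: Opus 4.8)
The plan is to realize both sides as increasing unions of their finite-rank truncations and to glue the finite-rank isomorphisms \eqref{eqn:iso between KN and crystal} into a single $\mathfrak{g}_\infty$-crystal isomorphism. By definition ${\bf KN}^\mathfrak{g}(\lambda) = \bigcup_n {\bf KN}^\mathfrak{g}_n(\lambda)$ is an increasing union of $\mathfrak{g}_n$-subcrystals, and for each admissible $n$ (with $n \geq \ell(\lambda)$, and $n > \ell(\lambda)$ when $\mathfrak{g} = \mathfrak{d}$) the cited isomorphism $\phi_n : {\bf KN}^\mathfrak{g}_n(\lambda) \to B_n(\varpi_\lambda)$ of $\mathfrak{g}_n$-crystals sends $T_\lambda$ to $b_{\varpi_\lambda}$, since $T_\lambda$ (resp. $b_{\varpi_\lambda}$) is the unique vertex of weight $\varpi_\lambda$ on each side. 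Thus I would first show that $B(\varpi_\lambda)$ is likewise exhausted by the $B_n(\varpi_\lambda)$, then verify that the $\phi_n$ are compatible with the two families of inclusions, so that $\phi := \bigcup_n \phi_n$ is well defined, and finally check that $\phi$ respects the $\mathfrak{g}_\infty$-structure.

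For the exhaustion I would invoke the connectedness of $B(\varpi_\lambda)$ (Proposition \ref{prop:conn}). Given $b \in B(\varpi_\lambda)$, connectedness provides a finite expression $b = \widetilde{X}_{i_r} \cdots \widetilde{X}_{i_1} b_{\varpi_\lambda}$ with each $\widetilde{X}_{i_j} \in \{\widetilde{e}_{i_j}, \widetilde{f}_{i_j}\}$; choosing $n$ larger than all indices $i_1, \dots, i_r$ and using that $B_n(\varpi_\lambda)$ is a $\mathfrak{g}_n$-subcrystal of $B(\varpi_\lambda)$ containing $b_{\varpi_\lambda}$ (hence closed under $\widetilde{e}_i, \widetilde{f}_i$ for $i < n$) forces $b \in B_n(\varpi_\lambda)$. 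This yields $B(\varpi_\lambda) = \bigcup_n B_n(\varpi_\lambda)$ as an increasing union of $\mathfrak{g}_n$-subcrystals, mirroring the KN side.

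Next I would establish the compatibility $\phi_{n+1}|_{{\bf KN}^\mathfrak{g}_n(\lambda)} = \phi_n$. Recall that ${\bf KN}^\mathfrak{g}_n(\lambda)$ and $B_n(\varpi_\lambda)$ are exactly the $\mathfrak{g}_n$-subcrystals generated by $T_\lambda$ and $b_{\varpi_\lambda}$ inside ${\bf KN}^\mathfrak{g}_{n+1}(\lambda)$ and $B_{n+1}(\varpi_\lambda)$; being a $\mathfrak{g}_{n+1}$-crystal isomorphism, $\phi_{n+1}$ is in particular a strict $\mathfrak{g}_n$-morphism carrying $T_\lambda \mapsto b_{\varpi_\lambda}$, so it maps ${\bf KN}^\mathfrak{g}_n(\lambda)$ isomorphically onto $B_n(\varpi_\lambda)$. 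Since a strict crystal isomorphism of a connected crystal is determined by the image of a single vertex—the operators $\widetilde{e}_i, \widetilde{f}_i$ commute with the isomorphism, so the image of $\widetilde{X}_{i_r}\cdots\widetilde{X}_{i_1} T_\lambda$ is forced—both $\phi_n$ and $\phi_{n+1}|_{{\bf KN}^\mathfrak{g}_n(\lambda)}$ agree on the connected crystal ${\bf KN}^\mathfrak{g}_n(\lambda)$. Hence $\phi = \bigcup_n \phi_n$ is a well-defined bijection sending $T_\lambda$ to $b_{\varpi_\lambda}$.

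Finally I would confirm that $\phi$ is a $\mathfrak{g}_\infty$-crystal isomorphism. Fix $i \in I$ and $T \in {\bf KN}^\mathfrak{g}(\lambda)$, and choose $n > i$ with $T \in {\bf KN}^\mathfrak{g}_n(\lambda)$. Because the $\mathfrak{g}_\infty$-structure on ${\bf KN}^\mathfrak{g}(\lambda)$ is by construction induced from that of ${\bf KN}^\mathfrak{g}_n(\lambda)$, and because $B_n(\varpi_\lambda)$ is a full $\mathfrak{g}_n$-subcrystal of $B(\varpi_\lambda)$ (so that $\widetilde{e}_i, \widetilde{f}_i, \varepsilon_i, \varphi_i$ for $i < n$ are computed identically in $B_n(\varpi_\lambda)$ and in $B(\varpi_\lambda)$), the identities $\phi(\widetilde{e}_i T) = \widetilde{e}_i \phi(T)$, $\phi(\widetilde{f}_i T) = \widetilde{f}_i \phi(T)$ together with preservation of weight and of $\varepsilon_i, \varphi_i$ all reduce to the corresponding properties of $\phi_n$. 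The main obstacle, I expect, is not the gluing bookkeeping but securing its two structural inputs: the exhaustion $B(\varpi_\lambda) = \bigcup_n B_n(\varpi_\lambda)$, which genuinely relies on connectedness, and the fact that each $B_n(\varpi_\lambda)$ is a full $\mathfrak{g}_n$-subcrystal equal to the connected component of $b_{\varpi_\lambda}$; once these hold, the compatibility of the $\phi_n$ is forced by the rigidity of crystal isomorphisms on connected crystals, and the stated correspondence $T_\lambda \leftrightarrow b_{\varpi_\lambda}$ is immediate.
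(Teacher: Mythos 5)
Your proof is correct, but it takes a genuinely different route from the paper's. You glue the finite-rank isomorphisms \eqref{eqn:iso between KN and crystal} along the two filtrations, using connectedness (Proposition \ref{prop:conn}) to get the exhaustion $B(\varpi_\lambda) = \bigcup_n B_n(\varpi_\lambda)$, and the rigidity of crystal isomorphisms on connected crystals (together with the uniqueness of the weight-$\varpi_\lambda$ vertex) to force compatibility of the $\phi_n$. The paper instead sets $\Lambda^+ = \Pi^\mathfrak{g}(\lambda', \lambda_1)$ and $\Lambda^- = \lambda_1 \Pi_0^\mathfrak{g}$, so that $\varpi_\lambda = \Lambda^+ - \Lambda^-$, embeds ${\bf KN}_n^\mathfrak{g}(\lambda)$ into $B_n(\Lambda^+) \otimes B_n(-\Lambda^-)$ via Proposition \ref{prop:bij spinor to KN} and the insertion identity $(T^+ \to T^-) = T_\lambda$, extends to infinite rank, and then identifies $C(b^+ \otimes b^-)$ with $B(\varpi_\lambda)$ by Lemma \ref{lem:summand in tensor} and Proposition \ref{prop:conn}. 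What each approach buys: yours is more elementary (no insertion, no Lemma \ref{lem:summand in tensor}) and makes explicit the direct-limit bookkeeping that the paper compresses into ``by construction, we can extend the embeddings''; on the other hand, essentially all of its content is concentrated in the strong reading of \eqref{eqn:iso between KN and crystal}, namely that $B_n(\varpi_\lambda)$ is the $\mathfrak{g}_n$-subcrystal of the \emph{infinite-rank} $B(\varpi_\lambda)$ generated by $b_{\varpi_\lambda}$ under both $\widetilde{e}_i$ and $\widetilde{f}_i$ --- for your closure and exhaustion steps you must (as you implicitly do) read $B_n(\lambda)$ as this generated subcrystal rather than as the displayed $\widetilde{f}$-orbit, which is not $\widetilde{e}$-closed already for ${\bf KN}_2^\mathfrak{c}((1))$ --- and for level-zero weights this identification rests on \cite{NS12} rather than on \cite{KN94} alone. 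The paper's tensor-product route avoids presupposing this level-zero identification: it needs finite/infinite compatibility only for the highest and lowest weight crystals $B(\Lambda^+)$ and $B(-\Lambda^-)$, where it is standard (Theorem \ref{thm:spinor} and \eqref{eqn:iso between spinor and crystal}), and it produces as a by-product the explicit embedding $B(\varpi_\lambda) \hookrightarrow B(\Lambda^+) \otimes B(-\Lambda^-)$, which fits the decomposition philosophy used later in Sections \ref{sec:Jacobi-Trudi} and \ref{sec:Grothendieck}.
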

\begin{proof}
    Let $\Lambda^+ = \Pi^\mathfrak{g}(\lambda', \lambda_1) \in P^+$ and $\Lambda^- = \lambda_1\Pi_0^\mathfrak{g} \in P^+$. Note that $\varpi_\lambda = \Lambda^+ - \Lambda^-$. 
    Take extremal weight vectors $b^+ = b_{\Lambda^+} \in B(\Lambda^+)$ and $b^- = b_{-\Lambda^-} \in B(-\Lambda^-)$. 
    Fix an integer $n \geq \ell = \ell(\lambda)$. By Proposition \ref{prop:bij spinor to KN}, 
    $b^+$ is $\mathfrak{g}_n$-crystal equivalent to $T^+ \in {\bf KN}_n^\mathfrak{g}(\rho_n(\lambda', \lambda_1))$ whose $i$th row is filled with $\overline{n-i+1}$ for $1 \leq i \leq \lambda'_{\lambda_1}$. 
    Similarly, $b^-$ is $\mathfrak{g}_n$-crystal equivalent to $T^- \in {\bf KN}_n^\mathfrak{g}((\lambda_1^n))$ whose $i$th row is filled with $i$ for $1 \leq i \leq n$.

    For $\mathfrak{g}_n$-type KN tableaux $T$ and $S$, we know that $T \otimes S$ is $\mathfrak{g}_n$-crystal equivalent to the tableau $(T \to S)$, which is the insertion of $T$ into $S$ (cf. \cite{Le02, Le03}). 
    Then we can directly check that $(T^+ \to T^-) = T_\lambda$, which implies 
    that the connected component of $T_\lambda$ is isomorphic to the connected component of $T^+ \otimes T^-$ in ${\bf KN}_n^\mathfrak{g}(\rho_n(\lambda', \lambda_1)) \otimes {\bf KN}_n^\mathfrak{g}((\lambda_1^n))$ as $\mathfrak{g}_n$-crystals. 
    Since ${\bf KN}^\mathfrak{g}_n(\lambda)$ is connected, we can find an embedding of $\mathfrak{g}_n$-crystals
    \[ {\bf KN}_n^\mathfrak{g}(\lambda) \hookrightarrow B_n^\mathfrak{g}(\Lambda^+) \otimes B_n^\mathfrak{g}(-\Lambda^-), \]
    which sends $T_\lambda$ to $b^+ \otimes b^-$. 
    By construction, we can extend the embeddings of $\mathfrak{g}_n$-crystals to an embedding of $\mathfrak{g}_\infty$-crystals
    \[ {\bf KN}^\mathfrak{g}(\lambda) \hookrightarrow B(\Lambda^+) \otimes B(-\Lambda^-), \]
    which sends $T_\lambda$ into $b^+ \otimes b^-$. 
    In addition, since ${\bf KN}^\mathfrak{g}(\lambda)$ is also connected, $C(T_\lambda) = {\bf KN}^\mathfrak{g}(\lambda)$ is isomorphic to $C(b^+ \otimes b^-)$. 
    By Lemma \ref{lem:summand in tensor}, $C(b^+ \otimes b^-)$ is isomorphic to $C(b_{\varpi_\lambda})$. 
    By Proposition \ref{prop:conn}, $C(b_{\varpi_\lambda}) = B(\varpi_\lambda)$ and it's done. 
    The last statement is clear from the above argument.
\end{proof}

\begin{rem}
    By Theorem \ref{thm:level zero realization}, for $\lambda \in \mathcal{P}$, 
    $B(\varpi_\lambda) \cong {\bf KN}^\mathfrak{g}(\lambda)$ has infinitely many elements of the same weight. 
    Thus, we cannot define the character of $B(\lambda)$ for $\lambda \in E$ in general.
\end{rem}

\section{Jacobi-Trudi type character formula} \label{sec:Jacobi-Trudi}
\subsection{$E$-expansion of a spinor model} \label{subsec:E-expansion}
Let ${\bf x} = \{ x_1, x_2, \dots \}$ and ${\bf x}_n = \{ x_1, \dots, x_n \}$ for $n \geq 1$ be sets of variables. 
To indicate variables of given symmetric functions or polynomials, we write the letter set. 
For example, the Schur function in variables ${\bf x}$ (resp. ${\bf x}_n$) is denoted by $s_\lambda({\bf x})$ (resp. $s_\lambda({\bf x}_n)$).
In addition, for a subset (not necessarily a subcrystal) $X$ of a crystal $B$, set ${\rm ch}\,X$ to be the weight generating function for $X$ 
(with an abuse of terminology).

\begin{lem} \label{lem:spinor residue character}
    For $a, b, c \in \mathbb{Z}_+$ and $0 \leq k \leq \min\{ a, b \}$, let $SST(\lambda(a, b, c))_{(k)}$ be the set of $T \in SST(\lambda(a, b, c))$ with $\mathfrak{r}_T = k$. Then
    \[ {\rm ch}\, SST(\lambda(a, b, c))_{(k)} = s_{(a+b+c-k, c+k)'}({\bf x}). \]
\end{lem}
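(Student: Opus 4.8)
The plan is to refine the skew Schur identity ${\rm ch}\,SST(\lambda(a,b,c)) = s_{(2^{b+c},1^a)/(1^b)}({\bf x})$ according to the value of $\mathfrak{r}_T$. Since the inner shape $(1^b)$ is a single column, the Pieri rule for $e_b$ gives the multiplicity-free expansion $s_{(2^{b+c},1^a)/(1^b)} = \sum_{s=0}^{\min\{a,b\}} s_{(2^{c+s},1^{a+b-2s})}$, and $(2^{c+s},1^{a+b-2s}) = (a+b+c-s,\,c+s)'$. As $SST(\lambda(a,b,c)) = \bigsqcup_{k} SST(\lambda(a,b,c))_{(k)}$ by the value of $\mathfrak{r}_T$, it suffices to match the piece with $\mathfrak{r}_T = k$ to the single Schur function $s_{(a+b+c-k,c+k)'}({\bf x})$; summing over $k$ will then recover the Pieri expansion, a useful consistency check.

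First I would invoke jeu de taquin rectification. Because the expansion above is multiplicity-free, for each occurring $\lambda$ the number of $T \in SST(\lambda(a,b,c))$ rectifying to a fixed $U \in SST(\lambda)$ is the Littlewood--Richardson coefficient $c^{(2^{b+c},1^a)}_{(1^b)\,\lambda}\in\{0,1\}$; hence rectification is a weight-preserving bijection from $\{\,T : {\rm sh}({\rm rect}(T)) = \lambda\,\}$ onto $SST(\lambda)$. This reduces the whole statement to the shape identity ${\rm sh}({\rm rect}(T)) = (a+b+c-\mathfrak{r}_T,\,c+\mathfrak{r}_T)'$, i.e. to showing that the second column of ${\rm rect}(T)$ has height $c+\mathfrak{r}_T$.

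The core step is to prove this shape identity by reading off Greene invariants from the bottom-to-top reading word $w$ of $T$. Writing $T^{\tt L}=(l_1<\cdots<l_{a+c})$ and $T^{\tt R}=(r_1<\cdots<r_{b+c})$, one checks from the positions in $w$ that the $l$-subword and the $r$-subword each appear in strictly decreasing order, so $w$ is a union of two strictly decreasing subsequences; thus ${\rm rect}(T)$ has at most two columns and $\lambda'_1+\lambda'_2 = a+b+2c$. It then remains to identify $\lambda'_1$, the longest strictly decreasing subsequence $D$ of $w$, with $a+b+c-\mathfrak{r}_T$, whence $\lambda'_2=c+\mathfrak{r}_T$. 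For the lower bound, setting $k=\mathfrak{r}_T$, the failure of the downward slide by $k+1$ yields an index $j^*$ with $l_{j^*} > r_{b+j^*-k-1}$, and concatenating $l_{c+a}>\cdots>l_{j^*}$ with $r_{b+j^*-k-1}>\cdots>r_1$ gives a strictly decreasing subsequence of length exactly $a+b+c-k$. For the upper bound, the success of the slide by $k$ (i.e. $l_j \le r_{b+j-k}$ for $1\le j\le c+k$) lets me pair each $l_j$ with $r_{b+j-k}$ into a weakly increasing two-chain, and completing these with the $a+b-2k$ leftover singletons partitions $w$ into $a+b+c-k$ weakly increasing subsequences; Dilworth's theorem then gives $D \le a+b+c-k$.

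The main obstacle is the positional bookkeeping in $w$: one must verify that the pairing $l_j \leftrightarrow r_{b+j-k}$ and the concatenation at the defect $j^*$ respect the reading order, so that they are genuine weakly increasing resp. strictly decreasing \emph{subsequences} and not merely value-monotone sets. This forces a split of $w$ into its three regions — the lone bottom $l$'s, the two-wide middle rows, and the lone top $r$'s — with a position comparison in each, and it is precisely the constraint $k \le \min\{a,b\}$ that makes every comparison go through. Once this is settled, the grading by $\mathfrak{r}_T$ coincides with the grading by rectification shape, and the per-$k$ character identity ${\rm ch}\,SST(\lambda(a,b,c))_{(k)} = s_{(a+b+c-k,c+k)'}({\bf x})$ follows.
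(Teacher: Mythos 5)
Your proof is correct except for one boundary case, and it takes a genuinely different route from the paper. The paper's own proof (left mostly as an exercise) uses the sliding operation itself: sliding $T^{\tt R}$ down by $k$ gives a weight-preserving bijection from $SST(\lambda(a,b,c))_{(k)}$ onto $SST(\lambda(a-k,b-k,c+k))_{(0)}$, and then one inducts on $\min\{a,b\}$, subtracting the inductively known characters of the pieces with $k \geq 1$ from the Pieri/Littlewood--Richardson expansion of the full skew Schur function to isolate the $(0)$-piece. You instead prove the stronger structural fact that $\mathfrak{r}_T$ computes the jeu de taquin invariant, ${\rm sh}({\rm rect}(T)) = (a+b+c-\mathfrak{r}_T,\, c+\mathfrak{r}_T)'$, and transfer characters through rectification, whose fibers are singletons here since $c^{(2^{b+c},1^a)}_{(1^b)\,\lambda} \in \{0,1\}$ by the dual Pieri rule. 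Your bookkeeping checks out: $l_j \leq r_{b+j-k}$ for $1 \leq j \leq c+k$ is exactly the semistandardness condition for the slide by $k$; the pairs $(l_j, r_{b+j-k})$ respect the bottom-to-top reading order ($r_{b+j-k}$ lies in a strictly higher row when $k \geq 1$, and to the right in the same row when $k = 0$); and the concatenation at the defect $j^*$ is position-compatible because $r_{b+j^*-k-1}$ sits strictly above $l_{j^*}$. What your route buys over the paper's is a bijective refinement: it identifies the grading by $\mathfrak{r}_T$ with the grading by rectification shape (so each $SST(\lambda(a,b,c))_{(k)}$ is a single connected $\mathfrak{gl}$-crystal component, not merely a set with the right character), at the cost of the Greene-invariant analysis; the paper's induction is shorter but yields only the character identity.

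The one genuine gap is the case $k = \mathfrak{r}_T = \min\{a,b\}$. There the slide by $k+1$ is not defined at all --- $\lambda(a-k-1, b-k-1, c+k+1)$ is not a shape since $a-k-1 < 0$ or $b-k-1 < 0$ --- so maximality of $\mathfrak{r}_T$ produces no violated inequality and hence no index $j^*$, and your lower-bound construction has nothing to concatenate. The patch is one line: when $k = \min\{a,b\}$ one has $a+b+c-k = \max\{a+c,\, b+c\}$, and the full left (resp.\ right) column of $T$, read bottom to top, is already a strictly decreasing subsequence of that length, giving the required lower bound. (Your upper bound argument applies verbatim in this case.) With that case added, the proof is complete.
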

\begin{proof}
    It is clear that ${\rm ch}\,SST(\lambda(a, b, c))_{(k)} = {\rm ch}\,SST(\lambda(a-k, b-k, c+k))_{(0)}$ for $k \geq 0$.
    Then we can prove the lemma by induction on $r = \min\{ a, b \}$ with the Littlewood-Richardson rule. 
    We leave it as an exercise.

\end{proof}

From Lemma \ref{lem:spinor residue character}, we obtain the Schur expansion of a spinor model.
\begin{eqnarray*}
    {\rm ch}\, {\bf T}^\mathfrak{c}(a) &=& t\sum_{c=0}^\infty s_{(a+c, c)'}({\bf x}) \qquad (a \geq 0)\\
    {\rm ch}\, {\bf T}^\mathfrak{b}(a) &=& t\sum_{b, c=0}^\infty s_{(a+b+c, c)'}({\bf x}) \qquad (a \geq 0)\\
    {\rm ch}\, {\bf T}^\mathfrak{d}(a) &=& t\sum_{b, c = 0}^\infty s_{(a+2b+c, c)'}({\bf x}) \qquad (a \geq 1)\\
    {\rm ch}\, {\bf T}^\mathfrak{d}(0) &=& t\sum_{b, c=0}^\infty s_{(2b+2c, 2c)'}({\bf x}) \\
    {\rm ch}\, \overline{\bf T}^\mathfrak{d}(0) &=& t\sum_{b, c=0}^\infty s_{(2b+2c+1, 2c+1)'}({\bf x})
\end{eqnarray*}

For $r \geq 0$, we denote by $e_r({\bf x})$ the $r$-th elementary symmetric function in variables ${\bf x}$, 
and set $e_0({\bf x}) = 1$ and $e_r({\bf x}) = 0$ for $r <0$.
For $r \in \mathbb{Z}$, define
\[ E_r({\bf x}) = \sum_{i=0}^\infty e_i({\bf x}) e_{r+i}({\bf x}). \]
We can show that $E_r ({\bf x}) = E_{-r} ({\bf x})$ for all $r \in \mathbb{Z}$. 
In addition, we can find the $E$-expansion of the character of ${\bf T}^\mathfrak{g}(a)$ using the Jacobi-Trudi formula for Schur functions.
\begin{prop} \label{prop:wt gen ftn of spinor}
    For $a \in \mathbb{Z}_+$ ($a \in \mathbb{N}$ when $\mathfrak{g} = \mathfrak{d}$), the followings hold.
    \begin{eqnarray*}
        {\rm ch}\, {\bf T}^\mathfrak{c}(a) &=& t(E_a({\bf x}) - E_{a+2}({\bf x})) \\
        {\rm ch}\, {\bf T}^\mathfrak{b}(a) &=& t(E_a({\bf x}) + E_{a+1}({\bf x})) \\
        {\rm ch}\, {\bf T}^\mathfrak{d}(a) &=& tE_a({\bf x}) \\
        {\rm ch}\, {\bf T}^\mathfrak{d}(0) + {\rm ch}\, \overline{\bf T}^\mathfrak{d}(0) &=& tE_0({\bf x}) \\
        {\rm ch}\, {\bf T}^\mathfrak{d}(0) - {\rm ch}\, \overline{\bf T}^\mathfrak{d}(0) &=& t\left( \sum_{i=0}^\infty e_i({\bf x}) \right) \left( \sum_{i=0}^\infty (-1)^i e_i({\bf x}) \right)
    \end{eqnarray*}
\end{prop}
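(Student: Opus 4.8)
The plan is to reduce all five identities to the dual (Nägelsbach–Kostka) Jacobi–Trudi formula for two-row shapes together with a single telescoping mechanism. First I would record that for integers $p \geq q \geq 0$ the conjugate of a two-row partition satisfies
\[ s_{(p,q)'}({\bf x}) = e_p({\bf x})e_q({\bf x}) - e_{p+1}({\bf x})e_{q-1}({\bf x}), \]
which is the $2\times 2$ determinant in the dual Jacobi–Trudi formula. Since each of the five Schur expansions preceding the proposition is a sum of terms of this form, substituting this determinant converts every character into a double sum of products $e_i e_j$.

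The key building block I would isolate is
\[ \sum_{q \geq 0} s_{(q+d, q)'}({\bf x}) = E_d({\bf x}) - E_{d+2}({\bf x}) \qquad (d \geq 0), \]
obtained by splitting $\sum_q (e_{q+d}e_q - e_{q+d+1}e_{q-1})$ into $\sum_q e_q e_{q+d} = E_d$ and, after reindexing to absorb the vanishing $e_{-1}$ term, $\sum_q e_q e_{q+d+2} = E_{d+2}$. This is already the type $\mathfrak{c}$ statement. For types $\mathfrak{b}$ and $\mathfrak{d}(a)$ I would reorganize the given double sums by the column difference $d = (\text{first part}) - (\text{second part})$: the $\mathfrak{b}$-sum ranges over all $d \geq a$, while the $\mathfrak{d}(a)$-sum (for $a \geq 1$) ranges over $d \geq a$ with $d \equiv a \pmod 2$. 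Feeding the building block into $\sum_{d \geq a}(E_d - E_{d+2})$ telescopes to $E_a + E_{a+1}$ in the $\mathfrak{b}$ case and to $E_a$ in the $\mathfrak{d}$ case. This telescoping is legitimate in the ring of formal power series because $E_d$ has lowest degree $d$ (its minimal term is $e_0 e_d = e_d$), so in any fixed total degree only finitely many summands survive and the tail terms vanish.

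For the two $\mathfrak{d}(0)$ identities I would combine the expansions of ${\rm ch}\,{\bf T}^\mathfrak{d}(0)$ and ${\rm ch}\,\overline{\bf T}^\mathfrak{d}(0)$ before simplifying. Their sum runs over shapes $(q+d,q)$ with $d$ even and $q$ arbitrary (even $q$ from ${\bf T}^\mathfrak{d}(0)$, odd $q$ from $\overline{\bf T}^\mathfrak{d}(0)$), so the building block and telescoping over even $d$ collapse it to $E_0$. For the difference the relative sign $(-1)^q$ survives, so I would prove the signed building block
\[ \sum_{q \geq 0}(-1)^q s_{(q+d,q)'}({\bf x}) = A_d + A_{d+2}, \qquad A_d := \sum_{q \geq 0}(-1)^q e_q e_{q+d}, \]
and then sum over even $d \geq 0$ to obtain $A_0 + 2\sum_{d \geq 2,\,\text{even}} A_d$. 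Finally I would expand the target product and reorganize it by $d = i - j$, using $A_{-d} = (-1)^d A_d$, to check that
\[ \left(\sum_{i \geq 0} e_i\right)\!\left(\sum_{j \geq 0}(-1)^j e_j\right) = A_0 + 2\!\!\sum_{d \geq 2,\,\text{even}}\!\! A_d, \]
which matches. The main obstacle is exactly this last identity: unlike the unsigned cases it does not telescope, so the bookkeeping of parities and of the sign $(-1)^q$ under reindexing must be handled carefully, and one must confirm that the doubly-infinite product reorganizes into precisely the same combination of $A_d$'s produced by the tableau sum.
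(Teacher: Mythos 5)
Your proposal is correct and follows essentially the same route the paper intends: the paper derives the Schur expansions from Lemma \ref{lem:spinor residue character} and then states that the $E$-expansion follows ``using the Jacobi-Trudi formula for Schur functions,'' which is exactly your substitution $s_{(p,q)'} = e_pe_q - e_{p+1}e_{q-1}$ followed by the telescoping of $\sum_{d}(E_d - E_{d+2})$. Your filled-in details all check out, including the signed building block $\sum_{q\geq 0}(-1)^q s_{(q+d,q)'} = A_d + A_{d+2}$ and the pairing argument identifying $\bigl(\sum_i e_i\bigr)\bigl(\sum_j (-1)^j e_j\bigr)$ with $A_0 + 2\sum_{d \geq 2,\ d\ \mathrm{even}} A_d$, so you have supplied precisely the verification the paper leaves to the reader.
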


\subsection{Jacobi-Trudi type character formulas} \label{subsec:Jacobi-Trudi}
For $r \in \mathbb{Z}$, denote
\begin{eqnarray*}
    e_r({\bf x}_n^{\pm 1}) &=& e_r(x_1, \dots, x_n, x_1^{-1}, \dots, x_n^{-1}), \\
    e_r({\bf x}_n^{\pm 1}, 1) &=& e_r(x_1, \dots, x_n, x_1^{-1}, \dots, x_n^{-1}, 1),
\end{eqnarray*}
where both are Laurent polynomials in $n$ variables, and set
\[ e_r'({\bf x}_n^{\pm 1}) = e_r({\bf x}_n^{\pm 1}) - e_{r-2}({\bf x}_n^{\pm 1}). \]
On the other hand, for $\diamondsuit \in \{ \,\cdot\,, \,'\, \}$ ($\cdot$ means the no marking) 
and $\mu \in \mathcal{P}_n$, we set
\begin{eqnarray*}
    \sigma_\mu^\diamondsuit({\bf x}) &=& \det(e_{(\mu_i-i+1)+(j-1)}^\diamondsuit({\bf x}) + \delta(j \neq 1)e_{(\mu_i-i+1)-(j-1)}^\diamondsuit({\bf x}))_{i, j = 1, \dots, n} \\
        &=& \begin{vmatrix}
        e_{\mu_1}^\diamondsuit & e_{\mu_1 + 1}^\diamondsuit + e_{\mu_1 - 1}^\diamondsuit & \cdots & e_{\mu_1 + (n-1)}^\diamondsuit + e_{\mu_1 - (n-1)}^\diamondsuit \\
        e_{\mu_2-1}^\diamondsuit & e_{(\mu_2-1) + 1}^\diamondsuit + e_{(\mu_2-1) - 1}^\diamondsuit & \cdots & e_{(\mu_2-1) + (n-1)}^\diamondsuit + e_{(\mu_2-1) - (n-1)}^\diamondsuit \\
        \vdots & \vdots & \ddots & \vdots \\
        e_{\mu_n - n+1}^\diamondsuit & e_{(\mu_n - n+1) + 1}^\diamondsuit + e_{(\mu_n - n+1) - 1}^\diamondsuit & \cdots & e_{(\mu_n - n+1) + (n-1)}^\diamondsuit + e_{(\mu_n - n+1) - (n-1)}^\diamondsuit
    \end{vmatrix},
\end{eqnarray*}
where $\delta(P) = 1$ if a condition $P$ is true and $\delta(P) = 0$ otherwise.
Then the character of $B_n^\mathfrak{g}(\varpi_\lambda)$, which is just $B_n(\varpi_\lambda)$ 
with a temporary recording of $\mathfrak{g}$, is given in \cite{FH91,KT87} as follows:
for $\lambda \in \mathcal{P}_n$,
\begin{eqnarray*}
    {\rm ch}\,B_n^\mathfrak{c}(\varpi_\lambda) &=& \sigma_{\lambda'}'({\bf x}_n^{\pm 1}) \\
    {\rm ch}\,B_n^\mathfrak{b}(\varpi_\lambda) &=& \sigma_{\lambda'}({\bf x}_n^{\pm 1}, 1) \\
    {\rm ch}\,B_n^\mathfrak{d} (\varpi_\lambda) &=&
    \begin{cases}
        \sigma_{\lambda'}({\bf x}_n^{\pm 1}) & \mbox{if } \ell(\lambda) < n, \\
        \displaystyle \frac{1}{2} \sigma_{\lambda'}({\bf x}_n^{\pm 1}) + \frac{1}{2} \sigma_{(\lambda-(1^n))'}'({\bf x}_n^{\pm 1}) \prod_{i=1}^n (x_i-x_i^{-1}) & \mbox{if } \ell(\lambda) = n, 
    \end{cases}
\end{eqnarray*}
and for $\lambda \in \mathcal{GP}_n^-$,
\[ {\rm ch}\,B_n^\mathfrak{d} (\varpi_\lambda) = \frac{1}{2} \sigma_{\mu'}({\bf x}_n^{\pm 1}) - \frac{1}{2} \sigma_{(\mu-(1^n))'}'({\bf x}_n^{\pm 1}) \prod_{i=1}^n (x_i-x_i^{-1}), \]
where $\mu = (\lambda_1, \dots, \lambda_{n-1}, |\lambda_n|) \in \mathcal{P}_n$.

On the other hand, we can easily observe the following relations for $r \in \mathbb{Z}$.
\begin{eqnarray*}
    e_{n-r}({\bf x}_n^{\pm 1}) &=& e_{n+r}({\bf x}_n^{\pm 1}) \\
    e_r({\bf x}_n^{\pm 1}, 1) &=& e_r({\bf x}_n^{\pm 1}) + e_{r-1}({\bf x}_n^{\pm 1})
\end{eqnarray*}
For $r \in \mathbb{Z}$, define $E_r'({\bf x}) = E_r({\bf x}) - E_{r+2}({\bf x})$ and $E_r''({\bf x}) = E_r({\bf x}) + E_{r+1}({\bf x})$. 
Then we obtain the following equations for all $r \in \mathbb{Z}$.
\begin{eqnarray}
    e_{n-r}({\bf x}_n^{\pm 1}) &=& (x_1 \dots x_n)^{-1} E_r({\bf x}_n) \label{eqn:Dn elementary symmetric} \\
    e_{n-r}'({\bf x}_n^{\pm 1}) &=& (x_1 \dots x_n)^{-1} E_r'({\bf x}_n) \label{eqn:Cn elementary symmetric} \\ 
    e_{n-r}({\bf x}_n^{\pm 1}, 1) &=& (x_1 \dots x_n)^{-1} E_r''({\bf x}_n) \label{eqn:Bn elementary symmetric}
\end{eqnarray}

\begin{df}[\cite{LZ06}]
    For $\diamondsuit \in \{ \,\cdot\,, \,'\,,\, ''\, \}$ and $(\lambda, \ell) \in \mathscr{P}(G)$, denote
    \begin{eqnarray*}
        \Sigma_{(\lambda, \ell)}^\diamondsuit({\bf x}) &=& \det(E_{(\lambda_{\ell-i+1}+i-1)+(j-1)}^\diamondsuit({\bf x}) + \delta(j \neq 1)E_{(\lambda_{\ell-i+1}+i-1)-(j-1)}^\diamondsuit({\bf x}))_{i, j = 1, \dots, \ell}\\
            &=& \begin{vmatrix}
            E_{\lambda_\ell}^\diamondsuit & E_{\lambda_\ell + 1}^\diamondsuit + E_{\lambda_\ell - 1}^\diamondsuit & \cdots & E_{\lambda_\ell + (\ell-1)}^\diamondsuit + E_{\lambda_\ell - (\ell-1)}^\diamondsuit \\
            E_{\lambda_{\ell-1}+1}^\diamondsuit & E_{(\lambda_{\ell-1}+1) + 1}^\diamondsuit + E_{(\lambda_{\ell-1}+1) - 1}^\diamondsuit & \cdots & E_{(\lambda_{\ell-1}+1) + (\ell-1)}^\diamondsuit + E_{(\lambda_{\ell-1}+1) - (\ell-1)}^\diamondsuit \\
            \vdots & \vdots & \ddots & \vdots \\
            E_{\lambda_1 + \ell-1}^\diamondsuit & E_{(\lambda_1 + \ell-1) + 1}^\diamondsuit + E_{(\lambda_1 + \ell-1) - 1}^\diamondsuit & \cdots & E_{(\lambda_1 + \ell-1) + (\ell-1)}^\diamondsuit + E_{(\lambda_1 + \ell-1) - (\ell-1)}^\diamondsuit
        \end{vmatrix},
    \end{eqnarray*}
    and define $S^\mathfrak{g}_{(\lambda, \ell)}({\bf x})$ by
    \begin{eqnarray*}
        S_{(\lambda, \ell)}^\mathfrak{c}({\bf x}) &=& \Sigma_{(\lambda, \ell)}'({\bf x}), \\
        S_{(\lambda, \ell)}^\mathfrak{b}({\bf x}) &=& \Sigma_{(\lambda, \ell)}''({\bf x}), \\
        S_{(\lambda, \ell)}^\mathfrak{d}({\bf x}) &=& \begin{cases} 
            \Sigma_{(\lambda, \ell)}({\bf x}) & \mbox{if } t = \ell, \\
            \displaystyle \frac{1}{2} \Sigma_{(\lambda, \ell)}({\bf x}) + \frac{1}{2} \left( \sum_{i=0}^\infty e_i({\bf x}) \right) \left( \sum_{i=0}^\infty (-1)^i e_i({\bf x}) \right) \Sigma_{(\lambda, \ell-1)}'({\bf x}) & \mbox{if } t < \ell, \\
            \displaystyle \frac{1}{2} \Sigma_{(\mu, \ell)}({\bf x}) - \frac{1}{2} \left( \sum_{i=0}^\infty e_i({\bf x}) \right) \left( \sum_{i=0}^\infty (-1)^i e_i({\bf x}) \right) \Sigma_{(\mu, \ell-1)}'({\bf x}) & \mbox{if } t > \ell,
        \end{cases}
    \end{eqnarray*}
    where $t = \ell(\lambda)$ and $\mu = (\lambda_1, \dots, \lambda_{2\ell-t})$. Note that $(\mu, \ell) \in \mathscr{P}(G)$ with $\ell(\mu) < \ell$.
\end{df}

\begin{prop} \label{prop:dominant wt character}
    We have the following character identity.
    \[ {\rm ch}\,{\bf T}^\mathfrak{g} (\lambda, \ell) = t^\ell\, S_{(\lambda, \ell)}^\mathfrak{g}({\bf x}) \]
\end{prop}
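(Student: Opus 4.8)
The plan is to prove the identity first at finite rank $n$, where every character is a genuine finite crystal character governed by the classical Jacobi-Trudi formulas, and then to pass to the limit $n \to \infty$. Fix $n \geq \ell$ and recall from \eqref{eqn:iso between spinor and crystal} and Proposition \ref{prop:bij spinor to KN} that the map $(\,\cdot\,)^{\tt ad}$ identifies ${\bf T}_n^\mathfrak{g}(\lambda, \ell)$ with ${\bf KN}_n^\mathfrak{g}(\rho_n(\lambda, \ell))$ as $\mathfrak{g}_n$-crystals. Since this isomorphism preserves $\mathfrak{g}_n$-weights, and the $\mathfrak{g}_n$-weight of a spinor element ${\bf T} = (T_\ell, \dots, T_1)$ equals $\ell\,\Pi_0^\mathfrak{g} + \sum_i {\rm wt}(T_i)$ with $\Pi_0^\mathfrak{g}$ restricting to $-(\epsilon_1 + \cdots + \epsilon_n)$ on $\mathfrak{g}_n$, the substitution $t \mapsto (x_1 \cdots x_n)^{-1}$ turns ${\rm ch}\, {\bf T}_n^\mathfrak{g}(\lambda, \ell)$ into the character of $B_n^\mathfrak{g}(\varpi_{\rho_n(\lambda, \ell)})$ expressed in ${\bf x}_n^{\pm 1}$. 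I would therefore start from
\[ {\rm ch}\, {\bf T}_n^\mathfrak{g}(\lambda, \ell)\big|_{t \to (x_1 \cdots x_n)^{-1}} \,=\, {\rm ch}\, B_n^\mathfrak{g}(\varpi_{\rho_n(\lambda, \ell)}) \,=\, \sigma_{\rho_n(\lambda, \ell)'}^\diamondsuit({\bf x}_n^{\pm 1}), \]
where the last equality is the finite-rank Jacobi-Trudi formula of \cite{FH91, KT87} and $\diamondsuit$ is the marking dictated by $\mathfrak{g}$.

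The next step is a determinant reduction. Since $\rho_n(\lambda, \ell)' = (n - \lambda_\ell, \dots, n - \lambda_1)$ has length exactly $\ell$ for large $n$, in the $n \times n$ determinant every row indexed by $i > \ell$ has a vanishing partition entry; using $e_s^\diamondsuit({\bf x}_n^{\pm 1}) = 0$ for $s < 0$ and $e_0^\diamondsuit = 1$, the lower-left block vanishes and the lower-right block is upper unitriangular, so the determinant collapses to its top-left $\ell \times \ell$ minor. Writing the entries of that minor through $(\rho_n(\lambda, \ell)')_i - i + 1 = n - (\lambda_{\ell - i + 1} + i - 1)$ and applying the conversion relations \eqref{eqn:Dn elementary symmetric}, \eqref{eqn:Cn elementary symmetric}, \eqref{eqn:Bn elementary symmetric}, each entry $e_{n - r}^\diamondsuit({\bf x}_n^{\pm 1})$ becomes $(x_1 \cdots x_n)^{-1} E_r^\diamondsuit({\bf x}_n)$. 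Factoring $(x_1 \cdots x_n)^{-1}$ out of each of the $\ell$ rows produces $(x_1 \cdots x_n)^{-\ell}$ times precisely $\Sigma_{(\lambda, \ell)}^\diamondsuit({\bf x}_n)$, the two off-diagonal $E$-terms matching once one notes that the $j = 1$ column carries a single entry. Cancelling the common factor $(x_1 \cdots x_n)^{-\ell}$ and letting $n \to \infty$---so that $E_r^\diamondsuit({\bf x}_n) \to E_r^\diamondsuit({\bf x})$ and $\sum_{\bf T} {\bf x}^{\bf T}$ stabilizes---gives ${\rm ch}\, {\bf T}^\mathfrak{g}(\lambda, \ell) = t^\ell\, \Sigma_{(\lambda, \ell)}^\diamondsuit({\bf x}) = t^\ell\, S_{(\lambda, \ell)}^\mathfrak{g}({\bf x})$ for $\mathfrak{g} = \mathfrak{b}, \mathfrak{c}$ and for the case $\ell(\lambda) = \ell$ of $\mathfrak{g} = \mathfrak{d}$.

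The main obstacle is the type $\mathfrak{d}$ case with $\ell(\lambda) = t \neq \ell$. When $t < \ell$ (resp. $t > \ell$) the shape $\kappa := \rho_n(\lambda, \ell)$ has length exactly $n$ (resp. is a generalized partition with negative last part), so the applicable finite-rank character is the $\ell(\kappa) = n$ (resp. $\mathcal{GP}_n^-$) formula, carrying the correction term $\pm \tfrac{1}{2}\, \sigma_{(\kappa - (1^n))'}'({\bf x}_n^{\pm 1}) \prod_{i=1}^n (x_i - x_i^{-1})$. Here I would identify $(\kappa - (1^n))'$ with the conjugate of the $(\ell - 1)$-part partition obtained by deleting the trailing zero of $\lambda$ (resp. its generalized-partition analogue), reduce its determinant to $(x_1 \cdots x_n)^{-(\ell - 1)} \Sigma_{(\lambda, \ell - 1)}'({\bf x}_n)$ as above, and rewrite $\prod_{i=1}^n (x_i - x_i^{-1}) = (-1)^n (x_1 \cdots x_n)^{-1} \prod_{i=1}^n (1 - x_i^2)$. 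The delicate point is the sign $(-1)^n$: the truncation ${\bf T}_n^\mathfrak{d}$ matches $B_n^\mathfrak{d}(\varpi_\kappa)$ only for $n$ of the parity prescribed by the half-spin sector of the spinor model (even column heights for ${\bf T}^\mathfrak{d}(0)$, odd for $\overline{\bf T}^\mathfrak{d}(0)$, cf. the constraints $(\mathfrak{d}\text{-}1)$, $(\mathfrak{d}\text{-}2)$ and Proposition \ref{prop:wt gen ftn of spinor}). Taking $n \to \infty$ along that parity stabilizes $(-1)^n$ to the correct sign and sends $\prod_i (1 - x_i^2)$ to $\left( \sum_i e_i({\bf x}) \right)\left( \sum_i (-1)^i e_i({\bf x}) \right)$, so the correction converges to $\pm \tfrac{1}{2} \left( \sum_i e_i({\bf x}) \right)\left( \sum_i (-1)^i e_i({\bf x}) \right) \Sigma_{(\lambda, \ell - 1)}'({\bf x})$, matching the two non-generic cases of $S_{(\lambda, \ell)}^\mathfrak{d}$. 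Keeping these signs, the parity bookkeeping, and the $\overline{\bf T}^\mathfrak{d}(0)$ contributions consistent across $t < \ell$, $t = \ell$, $t > \ell$ is where the real work lies; the cases $\mathfrak{g} = \mathfrak{b}, \mathfrak{c}$ need none of this and follow directly from the uniform argument above.
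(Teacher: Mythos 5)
Your proposal is correct and follows essentially the same route as the paper's proof: identify ${\rm ch}\,{\bf T}_n^\mathfrak{g}(\lambda,\ell)$ with ${\rm ch}\,B_n(\varpi_{\rho_n(\lambda,\ell)})$ via Proposition \ref{prop:bij spinor to KN}, apply the finite-rank Jacobi--Trudi formulas of \cite{FH91,KT87} together with the conversion identities \eqref{eqn:Dn elementary symmetric}--\eqref{eqn:Bn elementary symmetric}, and pass to the limit $n \to \infty$ (your reduction of the determinant to its $\ell \times \ell$ minor is just making explicit the trailing-zero collapse implicit in the paper's use of $\sigma_{\mu}^\diamondsuit$). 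Worth noting: your flagged sign $(-1)^n$ from $\prod_{i=1}^n(x_i - x_i^{-1}) = (-1)^n(x_1\cdots x_n)^{-1}\prod_i(1-x_i^2)$ is a genuine subtlety that the paper's one-line ``we directly show'' elides --- in the type $\mathfrak{d}$ cases with $\ell(\lambda) \neq \ell$ the displayed finite-$n$ identity ${\rm ch}\,{\bf T}_n^\mathfrak{g}(\lambda,\ell) = (x_1\cdots x_n)^{-\ell}S^\mathfrak{g}_{(\lambda,\ell)}({\bf x}_n)$ can only hold for $n$ of one parity (already for $(\lambda,\ell)=(\emptyset,1)$ the spinor extremal weight $(-1,\dots,-1)$ lies in the $W(\mathfrak{d}_n)$-orbit of $(1,\dots,1)$ only for even $n$), and your device of taking the limit along the correct parity is a legitimate repair, since the coefficientwise-stabilizing truncations make any subsequence limit equal to the full character.
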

\begin{proof}
    By Proposition \ref{prop:bij spinor to KN} and equations \eqref{eqn:Dn elementary symmetric}, \eqref{eqn:Cn elementary symmetric}, and \eqref{eqn:Bn elementary symmetric}, we directly show that
    \[ {\rm ch}\,{\bf T}_n^\mathfrak{g}(\lambda, \ell) = {\rm ch}\,B_n(\varpi_{\rho_n(\lambda, \ell)}) = (x_1 \dots x_n)^{-\ell} S^\mathfrak{g}_{(\lambda, \ell)}({\bf x}_n) \]
    for $n \in \mathbb{Z}_+$ and $(\lambda, \ell) \in \mathscr{P}(G)$.
    If we set ${\bf T}_{(n)}^\mathfrak{g}(\lambda, \ell) = {\bf T}_n^\mathfrak{g}(\lambda, \ell) \backslash {\bf T}_{n-1}^\mathfrak{g}(\lambda, \ell)$,
    where ${\bf T}_{-1}^\mathfrak{g}(\lambda, \ell) = \emptyset$, then we have
    \[ {\rm ch}\,{\bf T}^\mathfrak{g}(\lambda, \ell) = \sum_n {\rm ch}\,{\bf T}^\mathfrak{g}_{(n)}(\lambda, \ell)
    = \lim_{n \to \infty} {\rm ch}\,{\bf T}^\mathfrak{g}_n(\lambda, \ell). \]
    Since the weight lattice $P$ is the (direct) limit of the weight lattice associated with $\mathfrak{g}_n$, 
    taking the limit of symmetric polynomials (or characters) is understood as obtaining corresponding symmetric functions. 
    In this context, the term $(x_1 \dots x_n)^{-\ell}$ converges to $t^\ell$ and $S^\mathfrak{g}_{(\lambda, \ell)}({\bf x}_n)$ converges to $S^\mathfrak{g}_{(\lambda, \ell)}({\bf x})$, 
    which completes the proof.
\end{proof}

\section{The Grothendieck ring} \label{sec:Grothendieck}
\subsection{The Grothendieck ring} \label{subsec:Grothendieck background}
Let $\mathcal{C}$ be the category of $\mathfrak{g}_\infty$-crystals whose objects $B$ satisfies 
\begin{enumerate}
    \item there exists a finite subset $S \subseteq \mathcal{P}$ such that each connected component of $B$ is isomorphic to 
    $B(\varpi_\mu)$ or $B(\varpi_\mu) \otimes B(\Pi^\mathfrak{g}(\lambda, \ell))$ for some $\mu \in S$ and $(\lambda, \ell) \in \mathscr{P}(G)$.
    \item for $\Lambda \in P$ with ${\rm lv}(\Lambda) = n \geq 0$, the number of connected components of $B$ isomorphic to $B(\Lambda)$ is finite.
\end{enumerate}
The morphisms in $\mathcal{C}$ are taken to be morphisms of $\mathfrak{g}_\infty$-crystals. 
Note that $B \in \mathcal{C}$ may contain infinitely many connected components.

Let $\mathcal{K} = \mathcal{K}(\mathcal{C})$ be the Grothendieck group of $\mathcal{C}$, i.e., the additive group of isomorphism classes $[B]$ for $B \in \mathcal{C}$ 
with the following relation; $[B \oplus B'] = [B]+[B']$ for $B, B' \in \mathcal{C}$. 
It is clear that a complete list of $[B]$ for non-isomorphic connected crystals $B \in \mathcal{C}$ forms a $\mathbb{Z}$-basis of $\mathcal{K}$. 
We define a multiplication on $\mathcal{K}$ by
\[ [B] \cdot [B'] = [B \otimes B']. \]
The following theorem is proved by the similar argument in the proof of \cite[Theorem 5.1]{K11} with isomorphisms \eqref{eqn:zero tensor zero}, \eqref{eqn:posi tensor posi}, \eqref{eqn:zero tensor posi}, and \eqref{eqn:posi tensor zero}.
\begin{thm}
    The category $\mathcal{C}$ is a monoidal category under the tensor product of crystals. 
    In addition, the Grothendieck group $\mathcal{K}$ is an associative $\mathbb{Z}$-algebra with unity $[B(0)]$.
\end{thm}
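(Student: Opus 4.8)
The plan is to follow the strategy of \cite[Theorem 5.1]{K11}, proving in turn that $\mathcal{C}$ is closed under $\otimes$, that the resulting monoidal structure satisfies the coherence axioms, and that passing to isomorphism classes yields an associative unital $\mathbb{Z}$-algebra. Since the tensor product of crystals commutes with the decomposition into connected components, $B \otimes B' \cong \bigoplus_{i, j} C_i \otimes C_j'$ with $C_i$, $C_j'$ the connected components of $B$ and $B'$, so it suffices to analyze a single $C_i \otimes C_j'$. By condition (1) together with Proposition \ref{prop:decomp into zero and posi}, every component can be written uniformly as $B(\varpi_\mu) \otimes B(\Lambda^+)$ with $\mu$ in a finite set $S \subseteq \mathcal{P}$ and $\Lambda^+ \in P^+_{\rm int}$, the purely level-zero case corresponding to $\Lambda^+ = 0$.

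First I would establish closure. Writing $C_i \otimes C_j' \cong B(\varpi_\mu)\otimes B(\Lambda^+)\otimes B(\varpi_{\mu'})\otimes B(\Lambda'^+)$ and using associativity of $\otimes$, the middle factor $B(\Lambda^+)\otimes B(\varpi_{\mu'})$ is of positive-tensor-zero type and decomposes by \eqref{eqn:posi tensor zero} into a \emph{finite} sum of $B(\nu)$. Splitting each $B(\nu) \cong B(\nu^0)\otimes B(\nu^+)$ via Proposition \ref{prop:decomp into zero and posi}, then regrouping $B(\varpi_\mu)\otimes B(\nu^0)$ by the zero-tensor-zero rule \eqref{eqn:zero tensor zero} and $B(\nu^+)\otimes B(\Lambda'^+)$ by the positive-tensor-positive rule \eqref{eqn:posi tensor posi}, I obtain a direct sum of crystals of the form $B(\kappa)\otimes B(\rho)$ with $\kappa \in E$ and $\rho \in P^+$. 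Each such summand is a single connected component: by \eqref{eqn:zero tensor posi} together with Proposition \ref{prop:conn} it is isomorphic to $B(\Lambda_{\kappa, \rho})$ for one weight. One then checks that in fact $\rho \in P^+_{\rm int}$, since levels add and every contributing level is even (automatic when $\mathfrak{g}_\infty = \mathfrak{c}_\infty$, where $P^+_{\rm int} = P^+$).

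The main obstacle is verifying that $B \otimes B'$ again satisfies conditions (1) and (2); these are precisely the finiteness statements that make the Grothendieck ``convolution'' product well-defined, in the same spirit that underlies the power-series description of $\mathcal{K}^+$. For (1), the level-zero parts $\kappa$ produced above lie in finitely many $W$-orbits: \eqref{eqn:posi tensor zero} contributes only the finitely many weights $\nu^0$ indexed by the finite set $P^+_{[n]}(\Lambda^+, \varpi_{\mu'})$, and \eqref{eqn:zero tensor zero} involves Littlewood-Richardson coefficients, of which only finitely many are nonzero; since $\mu, \mu'$ already range over the finite sets $S, S'$, the controlling set for $B \otimes B'$ is finite. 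For (2), fix $\Lambda$ with ${\rm lv}(\Lambda) = n$. A copy of $B(\Lambda)$ can occur only in pairs $C_i \otimes C_j'$ whose levels sum to $n$, and the uniqueness clause of Proposition \ref{prop:decomp into zero and posi} forces $\rho = \Lambda^+$ and $\kappa \in W\Lambda^0$; combined with the dominance constraint ${\rm wt}(b) \in \Lambda'^+ - Q^+$ coming from \eqref{eqn:posi tensor posi} and with hypothesis (2) for $B$ and $B'$, this restricts the contributing pairs to a finite set, each arising with finite multiplicity (Littlewood-Richardson coefficients, the numbers $|B^{\max}_{n, \nu}|$, and weight multiplicities are all finite). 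Carrying out this bookkeeping rigorously is the technical heart of the argument, exactly as in \cite{K11}.

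Granting closure, the remaining steps are formal. The tensor product of crystals is strictly associative at the level of underlying sets and crystal operations, so the associativity constraint may be taken to be the identity and the pentagon and triangle axioms are automatic; the one-point crystal $B(0) \in \mathcal{C}$ satisfies $B(0) \otimes B \cong B \cong B \otimes B(0)$ and serves as the unit, making $\mathcal{C}$ monoidal. Finally, because $\otimes$ distributes over $\oplus$ in each variable, the rule $[B] \cdot [B'] = [B \otimes B']$ respects the relation $[B \oplus B'] = [B] + [B']$ and hence defines a $\mathbb{Z}$-bilinear multiplication on $\mathcal{K}$; associativity is inherited from that of $\otimes$, and $[B(0)]$ is a two-sided identity. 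Therefore $\mathcal{K}$ is an associative $\mathbb{Z}$-algebra with unity $[B(0)]$, as claimed.
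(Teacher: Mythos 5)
Your proposal is correct and follows essentially the paper's own route: the paper proves this theorem simply by invoking the argument of \cite[Theorem 5.1]{K11} together with the isomorphisms \eqref{eqn:zero tensor zero}, \eqref{eqn:posi tensor posi}, \eqref{eqn:zero tensor posi}, and \eqref{eqn:posi tensor zero}, which is precisely the reduction to pairwise tensor products of components and the finiteness bookkeeping that you carry out. One small caution: in your condition-(1) step the index set $P^+_{[n]}(\Lambda^+, \varpi_{\mu'})$ varies with the possibly infinitely many positive parts $\Lambda^+$ occurring among the components of $B$, so the per-pair finiteness of \eqref{eqn:posi tensor zero} should be supplemented by a uniform bound on the level-zero parts (e.g. $|(\nu^0)^\dagger| \leq |\mu'|$, since a highest weight element of $B_n(\Lambda^+) \otimes B_n(\varpi_{\mu'})$ has first tensor factor $b_{\Lambda^+}$, so the deviation from $\Lambda^+$ comes only from a weight of $B(\varpi_{\mu'})$) --- this is part of the bookkeeping you already flag as the technical heart of the argument.
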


Let $\mathcal{C}^0$ and $\mathcal{C}^+$ be the full subcategory of $\mathcal{C}$ consisting of objects 
whose connected components are isomorphic to $B(\lambda)$ for some $\lambda \in E$ and $\lambda \in P^+_{\rm int}$, respectively. 
It is known in \cite{K15,K16} that $\mathcal{C}^+$ is a semisimple tensor category.
We set $\mathcal{K}^0$ and $\mathcal{K}^+$ to be the subgroups of $\mathcal{K}$ generated by $[B(\lambda)]$ for some $\lambda \in E$ and $\lambda \in P^+_{\rm int}$, respectively.
By \eqref{eqn:zero tensor zero}, it immediately follows that $\mathcal{K}^0$ is a subalgebra of $\mathcal{K}$ and $\mathcal{K}^0$ is generated by $[B(\varpi_i)]$. 
In addition, by Proposition \ref{prop:tensor product}\,(3) and Proposition \ref{prop:decomp into zero and posi}, 
we have $\mathcal{K} \cong \mathcal{K}^0 \otimes_\mathbb{Z} \mathcal{K}^+$ and 
$\mathcal{K}$ has a $\mathbb{Z}$-basis $\{ [B(\varpi_\mu)] \cdot [B(\Pi^\mathfrak{g}(\lambda, \ell))] : \mu \in \mathcal{P}, (\lambda, \ell) \in \mathscr{P}(G) \}$. 
In particular, $\{ [B(\varpi_\mu)] : \mu \in \mathcal{P} \}$ and $\{ [B(\Pi^\mathfrak{g}(\lambda, \ell))] : (\lambda, \ell) \in \mathscr{P}(G) \}$ 
are $\mathbb{Z}$-bases of $\mathcal{K}^0$ and $\mathcal{K}^+$, respectively.

For $k \geq 0$ and $(\lambda, \ell) \in \mathscr{P}(G)$, set
\[ H_k^\mathfrak{g} = [B(\Pi_k^\mathfrak{g})], \quad H^\mathfrak{g}(\lambda, \ell) = [B(\Pi^\mathfrak{g}(\lambda, \ell))], \]
and $\overline{H}_0^\mathfrak{d} = [B(\overline{\Pi}_0^\mathfrak{d})]$.
Since $\mathcal{C}^+$ is a semisimple tensor category, we obtain the following proposition 
as a corollary of Proposition \ref{prop:wt gen ftn of spinor} and Proposition \ref{prop:dominant wt character}.
\begin{prop} \label{prop:Jacobi-Trudi formula}
    When $\mathfrak{g} = \mathfrak{d}$ and $(\lambda, \ell) \in \mathscr{P}(G)$ with $\ell(\lambda) < \ell$, the following identity holds in $\mathcal{K}^+$.
    \begin{eqnarray*}
        H^\mathfrak{d}(\lambda, \ell) &=& \frac{1}{2} \det( H^\mathfrak{d}_{(\lambda_{\ell-i+1}+i-1)+(j-1)}+ \delta(j \neq 1) H^\mathfrak{d}_{(\lambda_{\ell-i+1}+i-1)-(j-1)})_{i, j = 1, \dots, \ell} \\
        && +\frac{1}{2} (H^\mathfrak{d}_0 - \overline{H}^\mathfrak{d}_0) H^\mathfrak{c}(\lambda, \ell-1)
    \end{eqnarray*}
    When $\mathfrak{g} = \mathfrak{d}$ and $(\lambda, \ell) \in \mathscr{P}(G)$ with $\ell(\lambda) > \ell$, the following identity holds in $\mathcal{K}^+$.
    \begin{eqnarray*}
        H^\mathfrak{d}(\lambda, \ell) &=& \frac{1}{2} \det( H^\mathfrak{d}_{(\mu_{\ell-i+1}+i-1)+(j-1)}+ \delta(j \neq 1) H^\mathfrak{d}_{(\mu_{\ell-i+1}+i-1)-(j-1)})_{i, j =1, \dots, \ell} \\
        && -\frac{1}{2} (H^\mathfrak{d}_0 - \overline{H}^\mathfrak{d}_0) H^\mathfrak{c}(\mu, \ell-1)
    \end{eqnarray*}
    Here, $t = \ell(\lambda)$ and $\mu = (\lambda_1, \dots, \lambda_{2\ell-t}, 0^{t-\ell})$.
    Otherwise, the following identity holds in $\mathcal{K}^+$.
    \[ H^\mathfrak{g}(\lambda, \ell) = \det( H^\mathfrak{g}_{(\lambda_{\ell-i+1}+i-1)+(j-1)}+ \delta(j \neq 1) H^\mathfrak{g}_{(\lambda_{\ell-i+1}+i-1)-(j-1)})_{i, j \in [\ell]} \]
\end{prop}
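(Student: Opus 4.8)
The plan is to prove the identities by passing to characters. Since $\mathcal{C}^+$ is a semisimple tensor category and the classes $[B(\Pi^\mathfrak{g}(\lambda,\ell))]$ for $(\lambda,\ell)\in\mathscr{P}(G)$ form a $\mathbb{Z}$-basis of $\mathcal{K}^+$, the assignment $[B]\mapsto{\rm ch}\,B$ extends $\mathbb{Z}$-linearly to a map ${\rm ch}\colon\mathcal{K}^+\to\widehat{R}$ into an appropriate ring $\widehat{R}$ of symmetric functions, the target in which the formal symbol $t$ and the series $E_r({\bf x})$ live. Because ${\rm ch}(B\otimes B')={\rm ch}(B)\,{\rm ch}(B')$ and ${\rm ch}$ is additive on $\oplus$, it is a ring homomorphism respecting the product $[B]\cdot[B']=[B\otimes B']$. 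First I would argue that ${\rm ch}$ is \emph{injective}; granting this, each claimed identity in $\mathcal{K}^+$ is equivalent to the identity obtained by applying ${\rm ch}$ to both sides, so it suffices to verify the corresponding character identities.

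Next I would compute the images of the relevant classes. By Proposition \ref{prop:dominant wt character} we have ${\rm ch}(H^\mathfrak{g}(\lambda,\ell))=t^\ell\,S^\mathfrak{g}_{(\lambda,\ell)}({\bf x})$, and by Proposition \ref{prop:wt gen ftn of spinor} we have ${\rm ch}(H^\mathfrak{g}_k)=t\,E^\diamondsuit_k({\bf x})$, where $\diamondsuit$ is ${}'$ for $\mathfrak{g}=\mathfrak{c}$, is ${}''$ for $\mathfrak{g}=\mathfrak{b}$, and is the empty marking $\,\cdot\,$ for $\mathfrak{g}=\mathfrak{d}$. For the generic branch (types $\mathfrak{c}$ and $\mathfrak{b}$, and type $\mathfrak{d}$ with $t=\ell$), applying ${\rm ch}$ to the determinant on the right-hand side turns each entry of the $i$-th row into $t$ times the corresponding sum of $E^\diamondsuit$'s; factoring $t$ out of each of the $\ell$ rows yields $t^\ell\,\Sigma^\diamondsuit_{(\lambda,\ell)}({\bf x})$, which equals $t^\ell\,S^\mathfrak{g}_{(\lambda,\ell)}({\bf x})$ by the very definition of $S^\mathfrak{g}_{(\lambda,\ell)}$. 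This matches ${\rm ch}(H^\mathfrak{g}(\lambda,\ell))$, so by injectivity the generic identity holds.

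For the two exceptional $\mathfrak{d}$-branches ($t<\ell$ and $t>\ell$), the same row-factoring sends the determinant to $\tfrac{1}{2}t^\ell\,\Sigma_{(\lambda,\ell)}({\bf x})$, respectively $\tfrac{1}{2}t^\ell\,\Sigma_{(\mu,\ell)}({\bf x})$. For the correction term I would use the last line of Proposition \ref{prop:wt gen ftn of spinor}, namely ${\rm ch}(H^\mathfrak{d}_0-\overline{H}^\mathfrak{d}_0)=t\bigl(\sum_i e_i({\bf x})\bigr)\bigl(\sum_i(-1)^i e_i({\bf x})\bigr)$, together with the reading of $H^\mathfrak{c}(\lambda,\ell-1)$ as the element of $\mathcal{K}^+$ with character $t^{\ell-1}\Sigma'_{(\lambda,\ell-1)}({\bf x})$; then the product $\tfrac{1}{2}(H^\mathfrak{d}_0-\overline{H}^\mathfrak{d}_0)\,H^\mathfrak{c}(\lambda,\ell-1)$ maps to $\tfrac{1}{2}t^\ell\bigl(\sum_i e_i\bigr)\bigl(\sum_i(-1)^i e_i\bigr)\Sigma'_{(\lambda,\ell-1)}({\bf x})$. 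Adding, respectively subtracting, the two contributions reproduces precisely the second, respectively third, branch of the definition of $S^\mathfrak{d}_{(\lambda,\ell)}$, again matching ${\rm ch}(H^\mathfrak{d}(\lambda,\ell))$.

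The main obstacle is establishing the injectivity of ${\rm ch}$ and pinning down the target ring so that all the infinite sums and products, and the cross-type symbol $H^\mathfrak{c}$, are meaningful. I would prove injectivity by reduction to finite rank: a hypothetical finite $\mathbb{Z}$-linear relation among the characters ${\rm ch}\,B(\Pi^\mathfrak{g}(\lambda,\ell))$, specialized to the variables ${\bf x}_n$ for all sufficiently large $n$ via the isomorphism \eqref{eqn:iso between spinor and crystal}, becomes a relation among the characters of pairwise non-isomorphic irreducible $\mathfrak{g}_n$-modules, which are linearly independent; letting $n\to\infty$ forces every coefficient to vanish. The remaining bookkeeping, verifying the row-factoring of the determinants and matching the signs in the $\mathfrak{d}$-branches, is routine.
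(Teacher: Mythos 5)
Your proposal is correct and follows essentially the same route as the paper, which simply asserts the result as a corollary of Proposition \ref{prop:wt gen ftn of spinor} and Proposition \ref{prop:dominant wt character} using the semisimplicity of $\mathcal{C}^+$ (i.e., that classes in $\mathcal{K}^+$ are determined by their characters). Your write-up merely makes explicit what the paper leaves implicit --- the injectivity of the character map via reduction to finite rank, the row-by-row factoring of $t$ from the determinants, and the reading of $H^\mathfrak{c}(\lambda,\ell-1)$ as the element with character $t^{\ell-1}\Sigma'_{(\lambda,\ell-1)}({\bf x})$ --- all of which check out against the definitions of $S^{\mathfrak{g}}_{(\lambda,\ell)}$.
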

This proposition tells us that $\mathcal{K}^+$ is a subalgebra of $\mathcal{K}$ and $\mathcal{K}^+$ is generated by $H^\mathfrak{g}_k$ (and $\overline{H}^\mathfrak{d}_0$ when $\mathfrak{g} = \mathfrak{d}$) for $k \geq 0$.
In fact, $H^\mathfrak{c}(\lambda, \ell-1)$ and $H^\mathfrak{c}(\mu, \ell-1)$ appearing in $H^\mathfrak{d}(\lambda, \ell)$ 
are written as a polynomial in $H^\mathfrak{d}_k$ ($k \geq 0$) and $\overline{H}^\mathfrak{d}_0$ since $E_r'({\bf x}) = E_r({\bf x}) - E_{r+2}({\bf x})$.

\subsection{Subalgebras $\mathcal{K}^0$ and $\mathcal{K}^+$} \label{subsec:K0K+}
To explain the subalgebra $\mathcal{K}^+$, we prove the following lemma.
\begin{lem} \label{lem:dominance lemma}
    For $(\lambda, \ell), (\mu, m) \in \mathscr{P}(G)$, denote by $K^{(\mu, m)}_{(\lambda, \ell)}$ the coefficient of $H^\mathfrak{g}(\lambda, \ell)$ 
    in the product $H^\mathfrak{g}_{\mu_1} \cdots H^\mathfrak{g}_{\mu_m}$ if $\ell(\mu) \leq m$, or 
    $H^\mathfrak{g}_{\mu_1} \cdots H^\mathfrak{g}_{\mu_{2m-t}} (\overline{H}^\mathfrak{g}_0)^{t-m}$ if $\ell(\mu) = t > m$. Then
    \begin{enumerate}
        \item $K^{(\mu, m)}_{(\lambda, \ell)} = 0$ if $m \neq \ell$.
        \item for given $(\lambda, \ell) \in \mathscr{P}(G)$, there exist only finitely many $(\mu, m) \in \mathscr{P}(G)$ such that $K^{(\mu, m)}_{(\lambda, \ell)} \neq 0$.
        In particular, we have $K^{(\lambda, \ell)}_{(\lambda, \ell)} = 1$.
    \end{enumerate}
\end{lem}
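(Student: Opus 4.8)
The plan is to deduce both assertions from two facts recorded earlier in the paper: the level is additive on the constituents of a tensor product (the Remark following Proposition \ref{prop:tensor product}), and, for a tensor product of highest weight crystals, every constituent is dominated by the sum of the highest weights while the top one occurs with multiplicity one (the iterated form of \eqref{eqn:posi tensor posi}).

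First I would prove (1) by a level count. Set $c = {\rm lv}(\Pi_0^\mathfrak{g})$; since $\Pi_0^\mathfrak{g}$ is a nonzero dominant weight of infinite support, $c > 0$. Because $\Pi_k^\mathfrak{g} = \Pi_0^\mathfrak{g} + \varpi_k$ and $\overline{\Pi}_0^\mathfrak{d} = \Pi_0^\mathfrak{d} + 2\epsilon_1$ differ from $\Pi_0^\mathfrak{g}$ by finite-support, hence level-zero, weights, each generator $H_k^\mathfrak{g}$ and $\overline{H}_0^\mathfrak{d}$ is the class of a crystal of level $c$, and likewise ${\rm lv}(\Pi^\mathfrak{g}(\lambda,\ell)) = \ell c$. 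The product defining $K^{(\mu,m)}_{(\lambda,\ell)}$ is the class of a tensor product of exactly $m$ such crystals, noting that $(2m-t)+(t-m)=m$ in the case $\ell(\mu)=t>m$, so by additivity of the level every constituent has level $mc$. As $H^\mathfrak{g}(\lambda,\ell)$ has level $\ell c$ and $c>0$, it can occur only when $m=\ell$, which is (1).

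For (2) I may assume $m=\ell$ by (1). Using $\sum_i \varpi_{\mu_i} = \varpi_{\mu'}$ together with the two-case formula for $\Pi^\mathfrak{g}(\mu,\ell)$, I would check that the sum of the highest weights of the tensor factors equals $\Pi^\mathfrak{g}(\mu,\ell)$ in both cases $t\leq\ell$ and $t>\ell$. Iterating \eqref{eqn:posi tensor posi} then shows that every constituent $B(\nu)$ satisfies $\nu\leq\Pi^\mathfrak{g}(\mu,\ell)$ in the dominance order, with the top weight $\Pi^\mathfrak{g}(\mu,\ell)$ attained only by the tensor of highest weight vectors, hence with multiplicity one; taking $\mu=\lambda$ gives $K^{(\lambda,\ell)}_{(\lambda,\ell)}=1$. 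Since $\mathcal{C}^+$ is a tensor category, the product lies in $\mathcal{K}^+$ and expands in the basis $\{H^\mathfrak{g}(\nu,\ell')\}$, and by the level comparison only the terms with $\ell'=\ell$ occur.

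It remains to bound the support of the contributing $(\mu,\ell)$. The class $H^\mathfrak{g}(\lambda,\ell)$ can occur only if $\Pi^\mathfrak{g}(\lambda,\ell)\leq\Pi^\mathfrak{g}(\mu,\ell)$, that is, $\varpi_{\mu'}-\varpi_{\lambda'}\in Q^+$. I would then introduce the $\mathbb{Z}$-linear functional $\phi\colon E\to\mathbb{Z}$, $\sum_k v_k\epsilon_k\mapsto\sum_k v_k$, noting $Q\subseteq E$. From the explicit simple roots one has $\phi(\alpha_i)=0$ for $i\geq1$ and $\phi(\alpha_0)<0$, so $\phi\leq0$ on $Q^+$; applying $\phi$ yields $|\mu|=\phi(\varpi_{\mu'})\leq\phi(\varpi_{\lambda'})=|\lambda|$. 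As $\ell$ is fixed and only finitely many partitions $\mu$ satisfy $|\mu|\leq|\lambda|$, only finitely many $(\mu,\ell)$ contribute, completing (2). The hard part will be the careful bookkeeping of the dominance bound and the multiplicity-one claim in the $\mathfrak{d}$-type case $t>\ell$, where $\overline{H}_0^\mathfrak{d}$ enters through the second branch of the formula for $\Pi^\mathfrak{d}(\mu,\ell)$, and confirming that every constituent stays inside $P^+_{\rm int}$ so that the expansion genuinely lives in the asserted basis.
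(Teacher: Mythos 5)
Your proposal is correct, and its skeleton matches the paper's: part (1) is the same level-additivity count (the paper phrases it as ${\rm lv}(\gamma)={\rm lv}(\mu)$ for every weight $\gamma$ of $B(\Pi^\mathfrak{g}_{\mu_1})\otimes\cdots\otimes B(\Pi^\mathfrak{g}_{\mu_m})$), and part (2) begins, as in the paper, from the containment of weights in $\Pi^\mathfrak{g}(\mu,\ell)-Q^+$, so that a nonzero coefficient forces $\varpi_{\mu'}-\varpi_{\lambda'}\in Q^+$. Where you genuinely diverge is the finiteness step. The paper writes $\varpi_{\mu'}=\varpi_{\lambda'}+\alpha_{i_1}+\cdots+\alpha_{i_k}$, picks a maximal index $i_t$, and compares the coefficient of $\epsilon_{M+1}$ with $M=\max\{i_t,\ell(\lambda')\}$ to rule out $\mu_1>\lambda_1$, concluding $\mu\subseteq(\lambda_1^{\ell})$; you instead apply the coordinate-sum functional $\phi$, which kills $\alpha_i$ for $i\geq 1$ and is strictly negative on $\alpha_0$, hence nonpositive on $Q^+$, to get $|\mu|\leq|\lambda|$. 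Both bounds yield finiteness once $m=\ell$ is pinned down by (1); your functional argument is cleaner, treats $\mathfrak{b},\mathfrak{c},\mathfrak{d}$ uniformly (including the type-$\mathfrak{d}$ branch with $\overline{H}^\mathfrak{d}_0$, where the sum of the factors' highest weights is still $\ell\,\Pi_0^\mathfrak{g}+\varpi_{\mu'}$), and avoids the somewhat delicate index bookkeeping in the paper's coefficient comparison, at the cost of a weaker (but fully sufficient) bound $|\mu|\leq|\lambda|$ in place of the sharper containment $\mu\subseteq(\lambda_1^\ell)$. Two further points in your favor: you make explicit the multiplicity-one statement $K^{(\lambda,\ell)}_{(\lambda,\ell)}=1$ via uniqueness of the vector of the top weight $\Pi^\mathfrak{g}(\lambda,\ell)$ in the tensor product, which the paper asserts without comment, and your observation that all constituents automatically lie in $P^+_{\rm int}$ is indeed settled by the level count, since for $\mathfrak{b}_\infty$ and $\mathfrak{d}_\infty$ the set $P^+_{\rm int}$ consists exactly of the dominant weights of positive even level.
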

\begin{proof}
    Since statements for other $\mathfrak{g}_\infty$ are similarly proved, we assume $\mathfrak{g}_\infty = \mathfrak{c}_\infty$. 
    We set ${\rm wt}(\mu, m) = \{ {\rm wt}(b) \,|\, b \in B(\Pi^\mathfrak{c}_{\mu_1}) \otimes \dots \otimes B(\Pi^\mathfrak{c}_{\mu_m}) \}$. 
    Then we know ${\rm wt}(\mu, m) \subseteq \Pi^\mathfrak{c}(\mu, m) - Q^+$. 

    Suppose that $K^{(\mu, m)}_{(\lambda, \ell)} \neq 0$ for some $(\mu, m), (\lambda, \ell) \in \mathscr{P}({\rm Sp})$. 
    By construction, there exists a (highest) weight vector of weight $\Pi^\mathfrak{c}(\lambda, \ell)$ 
    in $B(\Pi^\mathfrak{c}_{\mu_1}) \otimes \dots \otimes B(\Pi^\mathfrak{c}_{\mu_m})$, which implies $\Pi^\mathfrak{c}(\lambda, \ell) \in {\rm wt}(\mu, m)$. 
    Then we have $m = \ell$ since ${\rm lv}(\gamma) = {\rm lv}(\mu)$ for all $\gamma \in {\rm wt}(\mu, m)$, which proves (1).

    Now, we assume $\ell = m$ and suppose $K^{(\mu, \ell)}_{(\lambda, \ell)} \neq 0$ for given $(\lambda, \ell) \in \mathscr{P}(G)$. 
    We know that the set of positive roots is 
    $\{ \epsilon_i - \epsilon_j \,|\, i < j \} \cup \{ -\epsilon_i - \epsilon_j \,|\, i \leq j \}$.
    By the above argument, $\Pi^\mathfrak{c}(\mu, \ell) - \Pi^\mathfrak{c}(\lambda, \ell) = \varpi_{\mu'} - \varpi_{\lambda'} \in Q^+$ 
    and we suppose that
    \begin{equation} \label{eqn:dominance order equation}
        \varpi_{\mu'} = \varpi_{\lambda'} + \alpha_{i_1} + \dots + \alpha_{i_k}
    \end{equation}
    for some $i_1, \dots, i_k \in I$. 
    Suppose $\ell(\mu') > \ell(\lambda')$ (or $\mu_1 > \lambda_1$) and let $i_t$ be a maximal index among $i_1, \dots, i_k$. 
    If we consider the coefficient of $\epsilon_{M+1}$ in \eqref{eqn:dominance order equation}, where $M = \max\{ i_t, \ell(\lambda') \}$, 
    then it is positive on the left-hand side and non-positive on the right-hand side, which is a contradiction. 
    Hence, we obtain $\ell(\mu') \leq \lambda_1$. Since $\ell(\mu) \leq \ell$, we know $\mu \subseteq (\lambda_1^\ell)$ and there are only finitely many choices of such $\mu$.
\end{proof}

Let ${\bf h} = \{ \,{\tt h}_k \,|\, k \in \mathbb{Z}_+ \}$ $(\{ \,{\tt h}_k \,|\, k \in \mathbb{Z}_+ \} \cup \{ \overline{\tt h}_0 \}$ when $\mathfrak{g} = \mathfrak{d})$ be a set of formal commuting variables 
and denote by $\mathbb{Z}[\![{\bf h}]\!]$ the $\mathbb{Z}$-algebra of formal power series in variables ${\bf h}$.
Now, we construct a $\mathbb{Z}$-algebra isomorphism between $\mathcal{K}^+$ and $\mathbb{Z}[\![{\bf h}]\!]$. 
A characterization of $\mathcal{K}^+$ as a $\mathbb{Z}$-algebra is given 
in \cite{K09b} when $\mathfrak{g}_\infty = \mathfrak{a}_{+\infty}$ and in \cite{K11} when $\mathfrak{g}_\infty = \mathfrak{a}_\infty$.
\begin{thm} \label{thm:Grothendieck positive}
    Define $\Phi^+ : \mathbb{Z}[\![{\bf h}]\!] \to \mathcal{K}^+$ to be a $\mathbb{Z}$-algebra homomorphism sending ${\tt h}_k$ to $H_k^\mathfrak{g}$ (and $\overline{\tt h}_0$ to $\overline{H}_0^\mathfrak{d}$). 
    Then $\Phi^+$ is an isomorphism of $\mathbb{Z}$-algebras.
\end{thm}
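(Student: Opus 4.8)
The plan is to prove that $\Phi^+$ is an isomorphism by exhibiting it as a bijection between topological bases, the essential input being the unitriangularity recorded in Lemma \ref{lem:dominance lemma}. First I would check that $\Phi^+$ is a well-defined homomorphism respecting the natural gradings. Grade $\mathbb{Z}[\![{\bf h}]\!]$ by degree (the number of generator factors) and grade $\mathcal{K}^+$ by the parameter $\ell$ occurring in $H^\mathfrak{g}(\lambda,\ell)$; each generator $H_k^\mathfrak{g}$, and $\overline{H}_0^\mathfrak{d}$, has $\ell=1$, and by Lemma \ref{lem:dominance lemma}(1) a product of $m$ generators expands only into terms $H^\mathfrak{g}(\lambda,m)$, so $\Phi^+$ sends the degree-$m$ part into the $\ell=m$ part. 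Since condition (2) in the definition of $\mathcal{C}$ forces the coefficient of each $H^\mathfrak{g}(\lambda,\ell)$ to be a well-defined integer, the graded pieces of $\mathcal{K}^+$ are complete in the same sense as those of $\mathbb{Z}[\![{\bf h}]\!]$, and $\Phi^+$ is a filtration-preserving algebra map defined on all power series.

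I would then argue degree by degree. A topological $\mathbb{Z}$-basis of the degree-$m$ part of the source is given by the degree-$m$ monomials, which I would index by the pairs $(\mu,m)\in\mathscr{P}(G)$: the monomial ${\tt h}_{\mu_1}\cdots{\tt h}_{\mu_m}$ when $\ell(\mu)\le m$ (padding with ${\tt h}_0$), and ${\tt h}_{\mu_1}\cdots{\tt h}_{\mu_{2m-t}}\,\overline{\tt h}_0^{\,t-m}$ with $t=\ell(\mu)>m$ in the remaining $\mathfrak{d}$-case. Under $\Phi^+$ these are precisely the products treated in Lemma \ref{lem:dominance lemma}, which expands each as
\[ \Phi^+(\text{monomial})=H^\mathfrak{g}(\mu,m)+\sum_{(\lambda,m)\neq(\mu,m)}K^{(\mu,m)}_{(\lambda,m)}\,H^\mathfrak{g}(\lambda,m), \]
a finite sum with diagonal coefficient $1$ whose remaining $\lambda$ are strictly below $\mu$ in the dominance order on conjugate partitions appearing in the proof of that lemma. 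Because $\{H^\mathfrak{g}(\lambda,\ell)\}$ is a $\mathbb{Z}$-basis of $\mathcal{K}^+$ and this transition is unitriangular with finite lower sets, it is invertible over $\mathbb{Z}$: inverting it writes each $H^\mathfrak{g}(\lambda,\ell)$ as a finite polynomial in the generators, which yields surjectivity (recovering the remark after Proposition \ref{prop:Jacobi-Trudi formula}), and shows the monomial images are $\mathbb{Z}$-linearly independent, which yields injectivity. Hence $\Phi^+$ is a $\mathbb{Z}$-algebra isomorphism.

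I expect the main obstacle to be the $\mathfrak{g}=\mathfrak{d}$ case, where the two $\ell=1$ generators $H_0^\mathfrak{d}$ and $\overline{H}_0^\mathfrak{d}$ must be distributed between the two regimes $\ell(\mu)\le m$ and $\ell(\mu)>m$ so that the degree-$m$ monomials match $\mathscr{P}(G)$ and the unitriangular system is set up correctly; this is exactly the situation Lemma \ref{lem:dominance lemma} is built for, since its statement already separates $H^\mathfrak{g}_{\mu_1}\cdots H^\mathfrak{g}_{\mu_m}$ from $H^\mathfrak{g}_{\mu_1}\cdots H^\mathfrak{g}_{\mu_{2m-t}}(\overline{H}^\mathfrak{g}_0)^{t-m}$, and I would rely on it rather than redo that bookkeeping by hand. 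The second delicate point is to justify inverting an infinite unitriangular matrix inside the completion $\mathbb{Z}[\![{\bf h}]\!]$ rather than in a finite-rank free module; here I would proceed level by level, using Lemma \ref{lem:dominance lemma}(2) to see that below a given $\mu$ only finitely many $\lambda$ contribute, so the triangular recursion terminates and produces a genuine two-sided inverse.
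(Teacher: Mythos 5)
Your argument hinges on a unitriangular transition that is ``a finite sum with \ldots finite lower sets,'' and this is where the proof breaks: you have transposed the finiteness in Lemma~\ref{lem:dominance lemma}. The expansion of a single monomial in the basis $\{H^\mathfrak{g}(\lambda,\ell)\}$ is in general an \emph{infinite} sum. For instance, for $\mathfrak{g}=\mathfrak{c}$ the product $H^\mathfrak{c}_0\,H^\mathfrak{c}_0=[B(\Lambda^\mathfrak{c}_0)\otimes B(\Lambda^\mathfrak{c}_0)]$ contains $B(\Pi^\mathfrak{c}(\kappa,2))$ for infinitely many $\kappa\in\mathcal{P}_2$ (compare \eqref{eqn:posi tensor posi}, or the character computation $t^2\bigl(\sum_{c\geq 0}s_{(c,c)'}\bigr)^2$: adding the imaginary-direction root $\alpha_0$ lets $|\kappa|$ grow without bound). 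What Lemma~\ref{lem:dominance lemma}\,(2) actually asserts is the opposite finiteness: for each \emph{fixed} $(\lambda,\ell)$, only finitely many monomials $(\mu,\ell)$ have $K^{(\mu,\ell)}_{(\lambda,\ell)}\neq 0$ (its proof shows $\mu\subseteq(\lambda_1^\ell)$), i.e.\ the sets \emph{above} a fixed basis label in the order $\Pi^\mathfrak{g}(\lambda,\ell)\in\Pi^\mathfrak{g}(\mu,\ell)-Q^+$ are finite, not the sets below a fixed monomial. This finiteness is used in the paper for a different purpose than yours: it makes the coefficient of each $H^\mathfrak{g}(\lambda,\ell)$ in $\Phi^+(f)$ a finite integer sum, which is precisely what makes $\Phi^+$ well defined on all of $\mathbb{Z}[\![{\bf h}]\!]$ (your appeal to condition (2) in the definition of $\mathcal{C}$ does not supply this). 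With the correct, column-finite triangularity, your inversion step collapses: the recursion expressing a basis element through monomials descends through infinitely many terms, and in particular it does \emph{not} exhibit $H^\mathfrak{g}(\lambda,\ell)$ as a finite polynomial in the generators. That polynomial expression is an independent input, namely the explicit Jacobi--Trudi determinant of Proposition~\ref{prop:Jacobi-Trudi formula}, and it is exactly how the paper proves surjectivity.

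Two further points. First, injectivity can be run in the spirit you intend, but only after correcting the direction: since principal filters are finite (the box bound $\mu\subseteq(\lambda_1^\ell)$), any nonzero $f$ has a \emph{maximal} element $(\mu^*,m)$ in its support, and the coefficient of $H^\mathfrak{g}(\mu^*,m)$ in $\Phi^+(f)$ is then exactly $c(\mu^*,m)\neq 0$ because $K^{(\mu^*,m)}_{(\mu^*,m)}=1$ and all other contributors would have to lie strictly above $\mu^*$; your version, resting on ``finite lower sets,'' does not go through as stated. Second, in type $\mathfrak{d}$ your identification of the degree-$m$ monomials with $\mathscr{P}(G_{2m})$ fails: mixed monomials such as ${\tt h}_0\overline{\tt h}_0$ are of neither of the two shapes treated in Lemma~\ref{lem:dominance lemma}, so deferring to that lemma does not cover them (the paper is admittedly also brief here, saying only that the argument applies ``similarly''). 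In summary, the overall architecture (triangularity plus Lemma~\ref{lem:dominance lemma}) resembles the paper's, but the finiteness claims at its core are false as stated, and surjectivity genuinely requires Proposition~\ref{prop:Jacobi-Trudi formula} rather than matrix inversion.
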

\begin{proof}
    Take $f = \sum_{(\mu, \ell)} c(\mu, \ell) {\tt h}_{\mu_1} \dots {\tt h}_{\mu_\ell} \in \mathbb{Z}[\![{\bf h}]\!]$. 
    To check the well-definedness of $\Phi^+$, we will show that $\Phi^+(f) = \sum_{(\mu, \ell)} c(\mu, \ell) H_{\mu_1}^\mathfrak{g} \dots H_{\mu_\ell}^\mathfrak{g}$ is actually contained in $\mathcal{K}^+$. 

    Since $\{ H^\mathfrak{g}(\lambda, \ell) | (\lambda, \ell) \in \mathscr{P}(G) \}$ is a $\mathbb{Z}$-basis of $\mathcal{K}^+$, we have
    \[ \sum_{(\mu, \ell)} c(\mu, \ell) H_{\mu_1}^\mathfrak{g} \dots H_{\mu_\ell}^\mathfrak{g} = 
    \sum_{(\lambda, \ell)} \left(\sum_{(\mu, \ell)} c(\mu, \ell) K^{(\mu, \ell)}_{(\lambda, \ell)} \right) H^\mathfrak{g}(\lambda, \ell), \]
    which means that $\Phi^+(f) \in \mathcal{K}^+$ if and only if $\sum c(\mu, \ell) K^{(\mu, \ell)}_{(\lambda, \ell)}$ is finite.
    By Lemma \ref{lem:dominance lemma}\,(2), the sum is finite and $\Phi^+$ is well-defined. 
    This argument could be applied to the case for $\mathfrak{g}_\infty = \mathfrak{d}_\infty$ similarly.

    We can show that $\Phi^+$ is surjective by Proposition \ref{prop:Jacobi-Trudi formula}. 
    In addition, we also can show that $\Phi^+$ is injective by considering a basis expansion of $\mathcal{K}^+$ and Lemma \ref{lem:dominance lemma}\,(2). 
    We leave it as an exercise.
\end{proof}

On the other hand, the isomorphism \eqref{eqn:zero tensor zero} defines a $\mathbb{Z}$-algebra isomorphism between $\mathcal{K}^0$ and ${\rm Sym}$, the ring of symmetric functions.
\begin{prop} \label{prop:Grothendieck zero}
    There exists an algebra isomorphism
    \begin{equation} \label{eqn:K0}
        \Psi^0 \,:\, \mathcal{K}^0 \longrightarrow {\rm Sym}
    \end{equation}
    sending $[B(\varpi_\lambda)]$ to $s_\lambda$ for $\lambda \in \mathcal{P}$.
\end{prop}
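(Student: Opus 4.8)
The plan is to exhibit $\Psi^0$ as a $\mathbb{Z}$-linear isomorphism of the underlying additive groups that matches a basis with a basis, and then to verify multiplicativity by comparing structure constants. Recall from the discussion preceding the proposition that $\{\,[B(\varpi_\mu)] : \mu \in \mathcal{P}\,\}$ is a $\mathbb{Z}$-basis of $\mathcal{K}^0$, while $\{\,s_\mu : \mu \in \mathcal{P}\,\}$ is the Schur basis of ${\rm Sym}$. Hence the assignment $[B(\varpi_\mu)] \mapsto s_\mu$ extends uniquely to a $\mathbb{Z}$-linear isomorphism $\Psi^0$ of abelian groups, and it sends the unit $[B(0)] = [B(\varpi_\emptyset)]$ to $s_\emptyset = 1$. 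The only substantive point is that $\Psi^0$ respects products.

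Since multiplication in both rings is $\mathbb{Z}$-bilinear, it suffices to verify $\Psi^0\big([B(\varpi_\alpha)]\cdot[B(\varpi_\beta)]\big) = s_\alpha s_\beta$ for all $\alpha, \beta \in \mathcal{P}$. First I would compute the left-hand factor in $\mathcal{K}^0$. As $\varpi_\alpha, \varpi_\beta \in E$ with $(\varpi_\alpha)^\dagger = \alpha$ and $(\varpi_\beta)^\dagger = \beta$, the isomorphism \eqref{eqn:zero tensor zero} gives $B(\varpi_\alpha) \otimes B(\varpi_\beta) \cong \bigoplus_{\nu \in E} B(\nu)^{\oplus {\tt LR}^{\nu^\dagger}_{\alpha\beta}}$. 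Using $B(\nu) \cong B(\varpi_{\nu^\dagger})$ for $\nu \in E$ to rewrite each summand in the basis, and regrouping the direct sum according to the partition $\gamma = \nu^\dagger$, I obtain the Littlewood--Richardson expansion $[B(\varpi_\alpha)]\cdot[B(\varpi_\beta)] = \sum_{\gamma \in \mathcal{P}} {\tt LR}^\gamma_{\alpha\beta}\,[B(\varpi_\gamma)]$ in $\mathcal{K}^0$. Applying $\Psi^0$ and invoking the classical rule $s_\alpha s_\beta = \sum_\gamma {\tt LR}^\gamma_{\alpha\beta}\,s_\gamma$ then yields the desired equality, so $\Psi^0$ is a $\mathbb{Z}$-algebra isomorphism.

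The main obstacle is the bookkeeping in the regrouping step: the direct sum in \eqref{eqn:zero tensor zero} is indexed by $\nu \in E$, whereas the target basis is indexed by partitions, so I must argue that collapsing each $W$-orbit to its representative $\varpi_\gamma$ produces exactly the coefficient ${\tt LR}^\gamma_{\alpha\beta}$ and not an overcount. Concretely, this amounts to identifying the multiset of isomorphism types of connected components of $B(\varpi_\alpha)\otimes B(\varpi_\beta)$ with $\{\,B(\varpi_\gamma)^{\oplus {\tt LR}^\gamma_{\alpha\beta}}\,\}_{\gamma \in \mathcal{P}}$; this set is finite because ${\tt LR}^\gamma_{\alpha\beta} = 0$ unless $|\gamma| = |\alpha|+|\beta|$. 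One also records that $B(\varpi_\alpha)\otimes B(\varpi_\beta)$ lies in $\mathcal{C}^0$, so the identities genuinely take place inside $\mathcal{K}^0$; here condition (2) in the definition of $\mathcal{C}$ guarantees finiteness of each graded multiplicity. Once this identification is pinned down, multiplicativity is immediate from matching the two Littlewood--Richardson expansions, and preservation of the unit was already noted.
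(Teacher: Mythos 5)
Your proposal is correct and follows exactly the route the paper intends: the paper justifies Proposition \ref{prop:Grothendieck zero} solely by the observation that the isomorphism \eqref{eqn:zero tensor zero} defines the algebra isomorphism, and your argument is precisely the spelled-out version of this, matching the basis $\{[B(\varpi_\mu)]\}$ of $\mathcal{K}^0$ with the Schur basis and checking that the structure constants on both sides are the Littlewood--Richardson coefficients. Your extra care in the regrouping step (collapsing each $W$-orbit in \eqref{eqn:zero tensor zero} to the representative $\varpi_\gamma$ via $B(\nu)\cong B(\varpi_{\nu^\dagger})$, with finiteness from ${\tt LR}^\gamma_{\alpha\beta}=0$ unless $|\gamma|=|\alpha|+|\beta|$) is a legitimate and welcome clarification of a point the paper leaves implicit.
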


\subsection{Realization of the Grothendieck ring} \label{subsec:Grothendieck realization}
By Theorem \ref{thm:Grothendieck positive} and Proposition \ref{prop:Grothendieck zero}, we explain the algebra structure of $\mathcal{K}^0$ and $\mathcal{K}^+$. 
Now, we are ready to explain the algebra structure of $\mathcal{K}$, which is another main result of this paper.
It is essentially based on the following decomposition of $B(\Pi_a^\mathfrak{g}) \otimes B(\varpi_b)$ into connected components.
\begin{prop} \label{prop:tensor decomp}
    For $a \in \mathbb{Z}_+$ ($a \in \mathbb{N}$ when $\mathfrak{g} = \mathfrak{d}$) and $b \in \mathbb{N}$, we have the following decomposition.
    \begin{eqnarray*}
        B(\Pi_a^\mathfrak{c}) \otimes B(\varpi_b) &\cong& \bigoplus_{i=0}^b \bigoplus_{j=0}^{\min\{a, b-i\}} B(\varpi_i) \otimes B(\Pi_{a+b-i-2j}^\mathfrak{c}) \\
        B(\Pi_a^\mathfrak{b}) \otimes B(\varpi_b) &\cong& \bigoplus_{i=0}^b \left\{ B(\varpi_i) \otimes \left( \bigoplus_{j=0}^{\min\{a, b-i\}} B(\Pi_{a+b-i-2j}^\mathfrak{b}) \right. \right. \\
            && \hspace{3cm}\left. \left. \oplus\ \bigoplus_{k=1}^{b-i-a} B(\Pi_{b-i-a-k}^\mathfrak{b})^{\oplus\, \delta(b-i > a)} \right) \right\} \\
        B(\Pi_a^\mathfrak{d}) \otimes B(\varpi_b) &\cong& \bigoplus_{i=0}^b \left\{ B(\varpi_i) \otimes \left( \bigoplus_{j=0}^{\min\{\lfloor \frac{a+b-i}{2} \rfloor, b-i\}} B(\Pi_{a+b-i-2j}^\mathfrak{d}) \right. \right. \\
            && \hspace{3cm}\left. \left. \oplus\ \bigoplus_{k=0}^{\lfloor \frac{b-i-a}{2} \rfloor} B(\overline{\Pi}_{b-i-a-2k}^\mathfrak{d})^{\oplus\, \delta(b-i \geq a)} \right) \right\} \\
        B(\Pi_0^\mathfrak{d}) \otimes B(\varpi_b) &\cong& \bigoplus_{i=0}^b \bigoplus_{j=0}^{\lfloor \frac{b-i}{2} \rfloor} B(\varpi_i) \otimes B(\Pi_{b-i-2j}^\mathfrak{d}) \\
        B(\overline{\Pi}_0^\mathfrak{d}) \otimes B(\varpi_b) &\cong& \bigoplus_{i=0}^b \bigoplus_{j=0}^{\lfloor \frac{b-i}{2} \rfloor} B(\varpi_i) \otimes B(\overline{\Pi}_{b-i-2j}^\mathfrak{d})
    \end{eqnarray*}
    Here, we put $\overline{\Pi}_i^\mathfrak{d} = \Pi_i^\mathfrak{d}$ for $i \geq 1$.
\end{prop}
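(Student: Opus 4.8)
The plan is to apply Proposition \ref{prop:tensor product}(4) to the pair $(\Pi_a^\mathfrak{g}, \varpi_b) \in P^+ \times E$, which already writes $B(\Pi_a^\mathfrak{g}) \otimes B(\varpi_b)$ as a direct sum $\bigoplus_\nu B(\nu)^{\oplus |B^{\max}_{n,\nu}|}$ of connected components, and then to put each $B(\nu)$ into the mixed form $B(\varpi_i) \otimes B(\Pi_c^\mathfrak{g})$ by means of the level-zero-times-dominant isomorphism \eqref{eqn:zero tensor posi} together with Proposition \ref{prop:decomp into zero and posi}. Thus there are really two things to pin down: which weights $\nu$ occur, together with their decompositions $\nu = \nu^0 + \nu^+$, and the multiplicities $|B^{\max}_{n,\nu}|$.

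First I would list, for sufficiently large $n$, the set $P^+_{[n]}(\Pi_a^\mathfrak{g}, \varpi_b)$. Since $\Pi_a^\mathfrak{g}$ and $\varpi_b$ have explicit $\epsilon$-coordinates (for instance $\Pi_a^\mathfrak{c} = -\sum_{i>a}\epsilon_i$ and $\varpi_b = \sum_{i=1}^b \epsilon_i$), the weights $\nu \in \Pi_a^\mathfrak{g} + \varpi_b + Q$ obeying the chain of inequalities in Proposition \ref{prop:tensor product}(4) can be enumerated directly. I expect them to be indexed by the height $i$ of the level-zero part $\nu^0$ together with a contraction parameter $j$ measuring how far the dominant part sits below its top, which are exactly the summation indices on the right. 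Feeding such a $\nu$ through \eqref{eqn:wt decomp}--\eqref{eqn:wt decomp into zero and dominant} should then return $\nu^0 = \varpi_i$ and $\nu^+ = \Pi_{a+b-i-2j}^\mathfrak{g}$ (with the $\overline{\Pi}^\mathfrak{d}$ variants in type $\mathfrak{d}$), so that $B(\nu) \cong B(\varpi_i) \otimes B(\Pi_c^\mathfrak{g})$.

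For the multiplicities I would pass to the finite-rank realizations $B_n(\Pi_a^\mathfrak{g}) \cong {\bf T}_n^\mathfrak{g}(a)$ and $B_n(\varpi_b) \cong {\bf KN}_n^\mathfrak{g}((1^b))$, so that $|B^{\max}_{n,\nu}|$ counts $\mathfrak{g}_n$-highest weight vectors of weight $\nu$ in ${\bf T}_n^\mathfrak{g}(a) \otimes {\bf KN}_n^\mathfrak{g}((1^b))$. As the second factor is a single-column (fundamental) crystal this is a Pieri-type situation, and I expect each such multiplicity to be $0$ or $1$, matching the absence of multiplicities exceeding $1$ on the right; the $\delta$-factors there record precisely the boundary cases in which a candidate $\nu$ fails to be dominant. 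The cleanest certificate is a character check: using ${\rm ch}\,{\bf T}_n^\mathfrak{g}(a)$ (equal to $e_{n-a}'({\bf x}_n^{\pm 1})$, $e_{n-a}({\bf x}_n^{\pm 1}, 1)$, or $e_{n-a}({\bf x}_n^{\pm 1})$ for $\mathfrak{g} = \mathfrak{c}, \mathfrak{b}, \mathfrak{d}$ by \eqref{eqn:Dn elementary symmetric}--\eqref{eqn:Bn elementary symmetric}), I would multiply by the fundamental character, expand via the classical Pieri rule, and match term by term with the restriction of the proposed right-hand side, taking $n \to \infty$ exactly as in the proof of Proposition \ref{prop:dominant wt character}. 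The three $E_r$-combinations recorded in Proposition \ref{prop:wt gen ftn of spinor} are what make the three formulas look different.

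The main obstacle is type $\mathfrak{d}$. There the fundamental crystal of height $n$ splits into two spin-like pieces governed by conditions ($\mathfrak{d}$-1) and ($\mathfrak{d}$-2), the dominant summands bifurcate into the $\Pi^\mathfrak{d}$ and $\overline{\Pi}^\mathfrak{d}$ families, and one must keep track of the parity constraints that produce the $\lfloor (b-i-a)/2 \rfloor$ ranges and the indicator $\delta(b-i \ge a)$. Controlling these sign and parity subtleties---equivalently, the $\tfrac{1}{2}(\Sigma \pm \cdots)$ structure already visible in the type-$\mathfrak{d}$ Jacobi-Trudi formula and in Proposition \ref{prop:wt gen ftn of spinor}---and confirming that the $\overline{H}_0^\mathfrak{d}$ contributions come out correctly, is where the real care is needed. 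The $\mathfrak{b}$ and $\mathfrak{c}$ cases follow the same outline but without the splitting, the extra $\bigoplus_k$ family in type $\mathfrak{b}$ arising solely from the additional $E_{a+1}$ term in ${\rm ch}\,{\bf T}^\mathfrak{b}(a)$.
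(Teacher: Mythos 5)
Your plan follows the same skeleton as the paper's proof: reduce to finite rank $n$, identify $B_n(\Pi_a^\mathfrak{g}) \otimes B_n(\varpi_b)$ with a tensor product of two column crystals via Proposition \ref{prop:bij spinor to KN} and Theorem \ref{thm:level zero realization}, lift the finite-rank highest weight vectors to extremal vectors of the $\mathfrak{g}_\infty$-crystal via Proposition \ref{prop:tensor product}(4), and rewrite each resulting $B(\nu)$ as $B(\varpi_i) \otimes B(\Pi^\mathfrak{g}_c)$ using Proposition \ref{prop:decomp into zero and posi}. The only methodological difference is at the finite-rank step: the paper quotes the known explicit decomposition of ${\bf KN}_n^\mathfrak{c}((1^{n-a})) \otimes {\bf KN}_n^\mathfrak{c}((1^b))$ into connected components (citing Nakashima, with explicit highest weight vectors $T_{ij}^{\tt ad} \otimes S_{ij}$), whereas you propose to certify the decomposition by a Pieri-type character computation. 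That substitute is legitimate in principle, since at fixed finite rank the characters of inequivalent irreducible components are linearly independent and therefore determine all multiplicities; but note that your phrase ``taking $n \to \infty$'' cannot be applied to the character identity itself, because the level-zero factors $B(\varpi_i)$ have no well-defined character at infinite rank (see the remark after Theorem \ref{thm:level zero realization}). The character check must stay at finite rank, and the passage to $\mathfrak{g}_\infty$ has to go through Proposition \ref{prop:tensor product}(4) and the $n$-stability of $|B^{\max}_{n,\nu}|$, exactly as the paper does by matching highest weight vectors rather than characters.

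There is, however, a concrete error in your multiplicity analysis, and it sits precisely in the type-$\mathfrak{d}$ case you single out as the main obstacle: the expectation that ``each such multiplicity is $0$ or $1$, matching the absence of multiplicities exceeding $1$ on the right'' is false. Since $\overline{\Pi}^\mathfrak{d}_m = \Pi^\mathfrak{d}_m$ for $m \geq 1$, the $j$-indexed and $k$-indexed families on the right-hand side overlap: when $b - i \geq a$, every index $m$ with $1 \leq m \leq b-i-a$ and $m \equiv a+b-i \pmod 2$ occurs once in each family, so the corresponding $B(\varpi_i) \otimes B(\Pi^\mathfrak{d}_m)$ appears with multiplicity two, i.e.\ $|B^{\max}_{n,\nu}| = 2$ for those $\nu$. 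The paper's remark immediately following the proposition records the smallest instance: $B(\Pi_1^\mathfrak{d}) \otimes B(\varpi_2)$ contains two distinct copies of $B(\Pi_1^\mathfrak{d})$. (Types $\mathfrak{b}$ and $\mathfrak{c}$ are indeed multiplicity-free: in type $\mathfrak{b}$ the $k$-family indices $b-i-a-k$ lie strictly below the floor $b-i-a$ of the $j$-family.) A term-by-term character match carried out under your multiplicity-free assumption would therefore fail in type $\mathfrak{d}$, and the related claim that the $\delta$-factors ``record the boundary cases in which a candidate $\nu$ fails to be dominant'' is also off --- they record whether the second family of components is present at all ($b-i > a$, resp.\ $b-i \geq a$). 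Once the multiplicity-two components are accounted for, your argument goes through and coincides in substance with the paper's.
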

\begin{proof}
    We assume $\mathfrak{g}_\infty = \mathfrak{c}_\infty$ because the same argument can be applied to other $\mathfrak{g}_\infty$. 
    We fix a sufficiently large $n$. By Proposition \ref{prop:bij spinor to KN} and Theorem \ref{thm:level zero realization}, we have
    $B_n(\Pi_a^\mathfrak{c}) \otimes B_n(\varpi_b) \cong {\bf KN}_n^\mathfrak{c}((1^{n-a})) \otimes {\bf KN}_n^\mathfrak{c}((1^b))$. 
    It has the following decomposition into connected components (cf. \cite{Na93}).
    \[ {\bf KN}_n^\mathfrak{c}((1^{n-a})) \otimes {\bf KN}_n^\mathfrak{c}((1^b)) \,=\, \bigsqcup_{i=0}^b \bigsqcup_{j=0}^{\min\{a, b-i\}} {\bf KN}_n^\mathfrak{c}((n-a-b+i+2j, i)') \]
    We remark that this decomposition is explicitly described using KN tableaux (cf. \cite{Le02, Le03}) 
    by enumerating all highest weight vectors of ${\bf KN}_n^\mathfrak{c}((1^{n-a})) \otimes {\bf KN}_n^\mathfrak{c}((1^b))$.

    Recall $(\cdot)^{\tt ad} : {\bf T}_n^\mathfrak{c}(a) \to {\bf KN}_n^\mathfrak{c}((1^{n-a}))$ (cf. \eqref{eqn:bij spinor to KN}) 
    is an isomorphism of $\mathfrak{g}_n$-crystals for $0 \leq a \leq n$.
    Let $T_{ij} \in {\bf T}^\mathfrak{c}(a)$ and $S_{ij} \in {\bf KN}^\mathfrak{c}((1^b))$ be such that, as $\mathfrak{g}_n$-crystals, 
    $T_{ij}^{\tt ad} \otimes S_{ij}$ is a highest weight vector and 
    $C(T_{ij}^{\tt ad} \otimes S_{ij})$ is isomorphic to ${\bf KN}_n^\mathfrak{c}((n-a-b+i+2j, i)')$.
    Then Proposition \ref{prop:tensor product}\,(4) suggests that 
    a highest weight vector of $B_n(\Pi_a^\mathfrak{c}) \otimes B_n(\varpi_b)$ as a $\mathfrak{g}_n$-crystal 
    is an extremal weight vector of $B(\Pi_a^\mathfrak{c}) \otimes B(\varpi_b)$ as a $\mathfrak{g}_\infty$-crystal.
    We can easily check that ${\rm wt}(T_{ij} \otimes S_{ij}) = \Pi^\mathfrak{c}_{a+b-i-2j} + \varpi_i$. 
    By Proposition \ref{prop:decomp into zero and posi}, we have 
    \[ B(\Pi^\mathfrak{c}_{a+b-i-2j} + \varpi_i) \cong B(\varpi_i) \otimes B(\Pi^\mathfrak{c}_{a+b-i-2j}), \]
    which completes the proof.
\end{proof}
\begin{rem}
    The tensor decomposition in Proposition \ref{prop:tensor decomp} may not be multiplicity-free. 
    For example, $B(\Pi_1^\mathfrak{d}) \otimes B(\varpi_2)$ has two distinct connected components which are isomorphic to $B(\Pi_1^\mathfrak{d})$.
\end{rem}

We introduce another set $\{ {\tt z}_i \,|\, i \in \mathbb{N} \}$ of formal commuting variables. 
Define $\mathcal{A}_0 = \mathbb{Z}[\![{\bf h}]\!]$, $\mathcal{A}_n = \mathcal{A}_0[{\tt z}_1, \dots, {\tt z}_n]$ for $n \in \mathbb{N}$, 
and $\mathcal{A} = \sum_{n \geq 0} \mathcal{A}_n$. 
We inductively define a $\mathbb{Z}$-algebra structure on $\mathcal{A}$ as follows.
\begin{itemize}
    \item The multiplication on $\mathcal{A}_0$ is the usual multiplication.
    \item Suppose that the multiplication on $\mathcal{A}_{n-1}$ is defined. 
    Define $a{\tt z}_n = {\tt z}_na + \delta_n(a)$ for $a \in \mathcal{A}_{n-1}$, where $\delta_n$ is a derivation on $\mathcal{A}_{n-1}$ such that
    \begin{eqnarray*}
        \mathfrak{c}_\infty && \begin{cases}
            \delta_n({\tt z}_k) = 0 \qquad\qquad (1 \leq k \leq n-1) \\
            \displaystyle\delta_n({\tt h}_a) = \sum_{i=0}^{n-1} \sum_{j=0}^{\min\{a, n-i\}} {\tt z}_i {\tt h}_{a+n-i-2j} & (a \in \mathbb{Z}_+)
        \end{cases} \\
        \mathfrak{b}_\infty && \begin{cases}
            \delta_n({\tt z}_k) = 0 \qquad\qquad (1 \leq k \leq n-1) \\
            \displaystyle \delta_n({\tt h}_a) = \sum_{i=0}^{n-1} {\tt z}_i \left( \sum_{j=0}^{\min\{a, b-i\}} {\tt h}_{a+b-i-2j} + \delta(b-i > a)\sum_{k=1}^{b-i-a} {\tt h}_{b-i-a-k} \right) & (a \in \mathbb{Z}_+)
        \end{cases} \\
        \mathfrak{d}_\infty && \begin{cases}
            \delta_n({\tt z}_k) = 0 \qquad \qquad (1 \leq k \leq n-1) \\
            \displaystyle \delta_n({\tt h}_0) = \sum_{i=0}^{n-1} \bigoplus_{j=0}^{\lfloor \frac{n-i}{2} \rfloor} {\tt z}_i {\tt h}_{b-i-2j}, 
            \quad \delta_n(\overline{\tt h}_0) = \sum_{i=0}^{n-1} \bigoplus_{j=0}^{\lfloor \frac{n-i}{2} \rfloor} {\tt z}_i \overline{\tt h}_{b-i-2j} \\
            \displaystyle \delta_n({\tt h}_a) = \sum_{i=0}^{n-1} {\tt z}_i \left( \sum_{j=0}^{\min\{\lfloor \frac{a+b-i}{2} \rfloor, b-i\}} {\tt h}_{a+b-i-2j} + \delta(b-i \geq a) \sum_{k=0}^{\lfloor \frac{b-i-a}{2} \rfloor} \overline{\tt h}_{b-i-a-2k} \right) & (a \in \mathbb{N}),
        \end{cases}
    \end{eqnarray*}
\end{itemize}
where $\overline{\tt h}_a = {\tt h}_a$ for $a \geq 1$.

When $\mathfrak{g} = \mathfrak{a}_\infty$, the characterization of $\mathcal{K}$ is given in \cite[Proposition 5.6]{K11}. 
By applying a similar argument in \cite[Proposition 5.6]{K11} with Proposition \ref{prop:tensor decomp}, 
we explicitly establish an algebra structure of $\mathcal{K}$.
\begin{thm} \label{thm:whole Grothendieck}
    The assignment sending $[B(\Pi^\mathfrak{g}_a)]$ to ${\tt h}_a$ ($[B(\overline{\Pi}^\mathfrak{d}_0)]$ to $\overline{\tt h}_0$) 
    and $[B(\varpi_b)]$ to ${\tt z}_b$ defines a $\mathbb{Z}$-algebra isomorphism $\Psi : \mathcal{K} \to \mathcal{A}$. 
    Indeed, we have $\Psi = \Psi^0 \otimes \Psi^+$, where $\Psi^+$ is the inverse map of $\Phi^+$ given in Theorem \ref{thm:Grothendieck positive}.
\end{thm}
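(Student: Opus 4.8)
The plan is to build the inverse map $\Theta := \Psi^{-1}\colon \mathcal{A}\to\mathcal{K}$ directly and show it is a ring homomorphism, exploiting that $\mathcal{A}$ is an iterated Ore (differential) extension $\mathcal{A}_n = \mathcal{A}_{n-1}[{\tt z}_n;\delta_n]$ of $\mathcal{A}_0 = \mathbb{Z}[\![{\bf h}]\!]$. On the module level everything is already in place: the text records $\mathcal{K}\cong\mathcal{K}^0\otimes_\mathbb{Z}\mathcal{K}^+$ with $\mathbb{Z}$-basis $\{[B(\varpi_\mu)]\cdot[B(\Pi^\mathfrak{g}(\lambda,\ell))]\}$, while Proposition \ref{prop:Grothendieck zero} gives $\mathcal{K}^0\cong{\rm Sym}=\mathbb{Z}[e_1,e_2,\dots]$ with $[B(\varpi_i)]\mapsto e_i$, and Theorem \ref{thm:Grothendieck positive} gives $\Phi^+\colon\mathbb{Z}[\![{\bf h}]\!]\xrightarrow{\ \sim\ }\mathcal{K}^+$. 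Identifying the commutative subring $\mathbb{Z}[{\tt z}_1,{\tt z}_2,\dots]\subseteq\mathcal{A}$ (the ${\tt z}_i$ commute since $\delta_n({\tt z}_k)=0$) with ${\rm Sym}$ via ${\tt z}_i\leftrightarrow e_i$, and $\mathcal{A}_0$ with $\mathbb{Z}[\![{\bf h}]\!]$, the map $\Psi:=\Psi^0\otimes\Psi^+$ is visibly a $\mathbb{Z}$-module isomorphism $\mathcal{K}\to\mathcal{A}$ (using the normal form in which all ${\tt z}$'s are collected to the left of the coefficients in $\mathcal{A}_0$). Thus the whole content is that $\Psi$, equivalently $\Theta$, respects multiplication.

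First I would set $\Theta|_{\mathcal{A}_0}=\Phi^+$, which is a ring homomorphism onto $\mathcal{K}^+$ by Theorem \ref{thm:Grothendieck positive}, and then extend $\Theta$ inductively from $\mathcal{A}_{n-1}$ to $\mathcal{A}_n=\mathcal{A}_{n-1}[{\tt z}_n;\delta_n]$ via the universal property of the Ore extension: it suffices to provide $Z_n:=[B(\varpi_n)]\in\mathcal{K}$ and to verify the compatibility
\[ \Theta(a)\,Z_n - Z_n\,\Theta(a) = \Theta(\delta_n(a)) \qquad (a\in\mathcal{A}_{n-1}), \]
which is exactly $\Theta$ applied to the defining relation $a{\tt z}_n = {\tt z}_n a + \delta_n(a)$. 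The observation that makes this tractable is that both sides are derivations of $a$ valued in $\mathcal{K}$, regarded as an $\mathcal{A}_{n-1}$-bimodule through $\Theta$: the left-hand side $a\mapsto \Theta(a)Z_n - Z_n\Theta(a)$ obeys the Leibniz rule because it is a commutator, and $\Theta\circ\delta_n$ does because $\delta_n$ is a derivation and $\Theta$ is (inductively) multiplicative. Hence it is enough to check the identity on the algebra generators ${\tt z}_1,\dots,{\tt z}_{n-1}$ and ${\tt h}_k$ (together with $\overline{\tt h}_0$ when $\mathfrak{g}=\mathfrak{d}$) of $\mathcal{A}_{n-1}$.

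On each generator ${\tt z}_k$ with $k<n$ both sides vanish, since $\delta_n({\tt z}_k)=0$ and $[B(\varpi_k)]$, $[B(\varpi_n)]$ commute inside the commutative subalgebra $\mathcal{K}^0$. On a generator ${\tt h}_k$ the identity reads
\[ [B(\Pi_k^\mathfrak{g})]\cdot[B(\varpi_n)] - [B(\varpi_n)]\cdot[B(\Pi_k^\mathfrak{g})] = \Theta(\delta_n({\tt h}_k)), \]
and this is precisely Proposition \ref{prop:tensor decomp}: in the full decomposition of $B(\Pi_k^\mathfrak{g})\otimes B(\varpi_n)$ the summand with $i=n$ is $B(\varpi_n)\otimes B(\Pi_k^\mathfrak{g})$, hence equals $[B(\varpi_n)]\cdot[B(\Pi_k^\mathfrak{g})]$, while the remaining summands ($i<n$) are exactly $\Theta(\delta_n({\tt h}_k))$ by the definition of $\delta_n$. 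The types $\mathfrak{g}=\mathfrak{b},\mathfrak{c},\mathfrak{d}$ (and the generator $\overline{\tt h}_0$, handled by the $B(\overline{\Pi}_0^\mathfrak{d})\otimes B(\varpi_n)$ decomposition) each match line-by-line with the corresponding formula for $\delta_n$. This produces a well-defined ring homomorphism $\Theta\colon\mathcal{A}=\bigcup_n\mathcal{A}_n\to\mathcal{K}$.

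Finally I would check $\Theta\circ\Psi=\mathrm{id}_{\mathcal{K}}$ on the basis $\{[B(\varpi_\mu)]\cdot[B(\Pi^\mathfrak{g}(\lambda,\ell))]\}$: applying the ring homomorphism $\Theta$ to $\Psi([B(\varpi_\mu)]\cdot[B(\Pi^\mathfrak{g}(\lambda,\ell))]) = s_\mu({\tt z})\cdot\Psi^+([B(\Pi^\mathfrak{g}(\lambda,\ell))])$ recovers $[B(\varpi_\mu)]\cdot[B(\Pi^\mathfrak{g}(\lambda,\ell))]$, because $\Theta$ restricted to the ${\tt z}$-subring inverts $\Psi^0$ and $\Theta|_{\mathcal{A}_0}=\Phi^+=(\Psi^+)^{-1}$. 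Since $\Psi$ is already a $\mathbb{Z}$-module isomorphism, $\Theta=\Psi^{-1}$ is a bijective ring homomorphism, so $\Psi$ is a $\mathbb{Z}$-algebra isomorphism and $\Psi=\Psi^0\otimes\Psi^+$ with $\Psi^+=(\Phi^+)^{-1}$ holds by construction. The hard part will be the inductive compatibility step: one must justify the reduction to generators through the twisted-derivation property, and then match, across all three types (and the $\overline{\tt h}_0$ generator with its floor-function summation ranges), the non-commutative correction $\Theta(\delta_n({\tt h}_k))$ with the $i<n$ part of the decomposition in Proposition \ref{prop:tensor decomp}. The failure of $[B(\Pi_k^\mathfrak{g})]$ to commute past $[B(\varpi_n)]$ is genuinely nontrivial and is exactly where all the tensor-product combinatorics is consumed.
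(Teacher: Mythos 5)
Your proposal is correct and follows essentially the same route as the paper: the paper's proof is a one-line invocation of the argument of \cite[Proposition 5.6]{K11} combined with Proposition \ref{prop:tensor decomp}, and that argument is exactly what you have spelled out --- the $\mathbb{Z}$-module isomorphism coming from the basis $\{[B(\varpi_\mu)]\cdot[B(\Pi^\mathfrak{g}(\lambda,\ell))]\}$ together with Theorem \ref{thm:Grothendieck positive} and Proposition \ref{prop:Grothendieck zero}, plus the verification that the commutation relations $a{\tt z}_n = {\tt z}_n a + \delta_n(a)$ defining $\mathcal{A}$ are realized in $\mathcal{K}$ by matching the $i<n$ summands of Proposition \ref{prop:tensor decomp} with $\delta_n({\tt h}_a)$ and the $i=n$ summand (where $j$ is forced to $0$) with ${\tt z}_n{\tt h}_a$. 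Your packaging via the universal property of the iterated Ore extension $\mathcal{A}_n=\mathcal{A}_{n-1}[{\tt z}_n;\delta_n]$ is a clean formalization of the same computation rather than a different method.

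One caveat: your reduction ``it is enough to check the identity on the algebra generators ${\tt h}_k$'' is literally valid only on the polynomial subalgebra $\mathbb{Z}[{\bf h}]\subseteq\mathcal{A}_0=\mathbb{Z}[\![{\bf h}]\!]$, since the ${\tt h}_k$ generate $\mathcal{A}_0$ only topologically. To extend the compatibility $\Theta(a)Z_n - Z_n\Theta(a)=\Theta(\delta_n(a))$ to genuine power series $a$ (and indeed to know that $\delta_n$ itself is well defined on all of $\mathcal{A}_0$), you need a continuity statement: multiplication by $Z_n=[B(\varpi_n)]$ in $\mathcal{K}$ commutes with the formal infinite sums, which follows from the finiteness of multiplicities guaranteed by the defining conditions on the category $\mathcal{C}$ and the same Lemma \ref{lem:dominance lemma}--type finiteness already used in the well-definedness part of Theorem \ref{thm:Grothendieck positive}. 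This is a fixable omission, and the paper glosses over the identical point, but a complete write-up should record it.
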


{\bf Declarations}

{\bf Ethical Approval} not applicable

{\bf Funding} This research was supported by Basic Science Research Program through the National Research Foundation of Korea(NRF) funded by the Ministry of Education (RS-2023-00241542).

{\bf Availability of data and materials} not applicable

\bibliographystyle{plain}
\bibliography{ext_wt_crystal_inf.bib}

\end{document}